\newtheorem{theorem}{\protect\theoremname}
\newtheorem{definition}[theorem]{\protect\definitionname}
\newtheorem{proposition}[theorem]{\protect\propositionname}
\newtheorem{lemma}[theorem]{\protect\lemmaname}
\newtheorem{corollary}[theorem]{\protect\corollaryname}
\newtheorem{assumption}[theorem]{Assumption}
\providecommand{\corollaryname}{Corollary}
\providecommand{\lemmaname}{Lemma}
\providecommand{\propositionname}{Proposition}
\providecommand{\remarkname}{Remark}
\providecommand{\definitionname}{Definition}
\providecommand{\theoremname}{Theorem}
\journal{arXiv}
\begin{document}

\begin{frontmatter}

\title{Dimension-wise Multivariate Orthogonal Polynomials in General Probability Spaces\tnoteref{t1}}
\author{Sharif Rahman\fnref{fn1}}
\address{Applied Mathematical and Computational Sciences, The University of Iowa, Iowa City, Iowa 52242, U.S.A.}
\ead{sharif-rahman@uiowa.edu}
\fntext[fn1]{Professor.}
\tnotetext[t1]{Grant sponsor: U.S. National Science Foundation; Grant No. CMMI-1462385.}

\begin{abstract}
This paper puts forward a new generalized polynomial dimensional decomposition (PDD), referred to as GPDD, comprising hierarchically ordered measure-consistent multivariate orthogonal polynomials in dependent random variables.  Unlike the existing PDD, which is valid strictly for independent random variables, no tensor-product structure is assumed or required.  Important mathematical properties of GPDD are studied by constructing dimension-wise decomposition of polynomial spaces, deriving statistical properties of random orthogonal polynomials, demonstrating completeness of orthogonal polynomials for prescribed assumptions, and proving mean-square convergence to the correct limit, including when there are infinitely many random variables.  The GPDD approximation proposed should be effective in solving high-dimensional stochastic problems subject to dependent variables.
\end{abstract}

\begin{keyword}
Uncertainty quantification, multivariate orthogonal polynomials, ANOVA, polynomial dimensional decomposition, non-product-type probability measures.
\end{keyword}

\end{frontmatter}

\section{Introduction}
Polynomial dimensional decomposition (PDD) is a dimension-wise Fourier series expansion in random orthogonal polynomials \cite{rahman08,rahman18}.  Methods rooted in PDD are commonly used to solve high-dimensional stochastic problems in many fields, such as solid mechanics \cite{chakraborty08}, structural dynamics \cite{rahman11}, fluid dynamics \cite{tang16}, and design optimization \cite{ren16}. The decomposition, introduced by the author as the polynomial variant of the analysis-of-variance (ANOVA) dimensional decomposition (ADD) \cite{griebel10,hoeffding48, kuo10, rabitz99, rahman14,sobol93}, mitigates the curse of dimensionality to some extent by building an input-output behavior of complex systems with low effective dimensions \cite{caflisch97}.  However, the existing PDD is largely founded on the independence assumption of input variables.  The assumption exploits product-type probability measures, facilitating construction of the space of multivariate orthogonal polynomials via the tensor product of the spaces of univariate orthogonal polynomials.  In reality, there may exist significant correlation or dependence among input variables, impeding or invalidating most methods, including the existing PDD.  The Rosenblatt transformation \cite{rosenblatt52}, commonly used for mapping dependent to independent variables, may induce very strong nonlinearity to a stochastic response, potentially degrading or even prohibiting convergence of probabilistic solutions \cite{rahman09}. Therefore, innovations beyond tensor-product PDD, capable of tackling non-product-type probability measures, are highly desirable.

This paper proposes a new generalized PDD, referred to as GPDD, to account for dependent, non-product-type probability measures of input random variables. Mathematical issues concerning necessary and sufficient conditions for the completeness of multivariate orthogonal polynomials, statistical properties of orthogonal polynomials, convergence, and truncations -- all associated with the GPDD -- are studied. The paper is structured as follows.  Section 2 defines or discusses mathematical notations and preliminaries.  The assumptions on the input probability measure are clarified. Section 3 briefly reviews the generalized ADD for dependent variables, providing a vital link to the development of GPDD. An exposition of multivariate orthogonal polynomials consistent with a general, non-product-type probability measure, including derivation of their second moment properties, is given in Section 4.  The section also describes relevant polynomial spaces and construction of their orthogonal decompositions.  The orthogonal basis and completeness of multivariate orthogonal polynomials have also been established.  Section 5 formally introduces GPDD for a square-integrable output random variable, including discussion on its convergence, exactness, and optimality.  In the same section, the approximation quality of a truncated GPDD is discussed. The section ends with an explanation on when the GPDD proposed can be extended for infinitely many input variables.  Section 6 presents a numerical example and discusses the effectiveness of GPDD.  Finally, conclusions are drawn in Section 7.


\section{Input random variables}
Let $\mathbb{N}:=\{1,2,\ldots\}$, $\mathbb{N}_{0}:=\mathbb{N} \cup \{0\}$, $\mathbb{R}:=(-\infty,+\infty)$, and $\mathbb{R}_{0}^{+}:=[0,+\infty)$ represent the sets of positive integer (natural), non-negative integer, real, and non-negative real numbers, respectively. For a non-zero, finite integer $N\in\mathbb{N}$,
denote by $\mathbb{A}^N \subseteq \mathbb{R}^N$ a bounded or unbounded subdomain of $\mathbb{R}^N$.

Let $(\Omega,\mathcal{F},\mathbb{P})$ be a complete probability space, where $\Omega$ is a sample space representing an abstract set of elementary events, $\mathcal{F}$ is a $\sigma$-algebra on $\Omega$, and $\mathbb{P}:\mathcal{F}\to[0,1]$ is a probability measure.  With $\mathcal{B}^{N}:=\mathcal{B}(\mathbb{A}^{N})$ representing the Borel $\sigma$-algebra on $\mathbb{A}^N \subseteq \mathbb{R}^N$, consider an $\mathbb{A}^{N}$-valued input random vector $\mathbf{X}:=(X_{1},\ldots,X_{N})^T:(\Omega,\mathcal{F})\to(\mathbb{A}^{N},\mathcal{B}^{N})$, describing the statistical uncertainties in all system parameters of a stochastic problem.  The input random variables are generally dependent and are also referred to as basic random variables.  The integer $N$ represents the number of input random variables and is often referred to as the dimension of the stochastic problem.

Denote by $F_{\mathbf{X}}({\mathbf{x}}):=\mathbb{P}(\cap_{i=1}^{N}\{ X_i \le x_i \})$ the joint distribution function of $\mathbf{X}$, admitting the joint probability density function $f_{\mathbf{X}}({\mathbf{x}}):={\partial^N F_{\mathbf{X}}({\mathbf{x}})}/{\partial x_1 \cdots \partial x_N}$. Given the abstract probability space $(\Omega,\mathcal{F},\mathbb{P})$, the image probability space is $(\mathbb{A}^N,\mathcal{B}^{N},f_{\mathbf{X}}d\mathbf{x})$, where $\mathbb{A}^N$ can be viewed as the image of $\Omega$ from the mapping $\mathbf{X}:\Omega \to \mathbb{A}^N$, and is also the support of $f_{\mathbf{X}}({\mathbf{x}})$.  Relevant statements and objects in one space have obvious counterparts in the other space.  Both probability spaces will be used in this paper.

A set of assumptions used or required by GPDD is as follows.

\begin{assumption}
The random vector $\mathbf{X}:=(X_{1},\ldots,X_{N})^T:(\Omega,\mathcal{F})\to(\mathbb{A}^{N},\mathcal{B}^{N})$
\begin{enumerate}[{$(1)$}]
\item
has an absolutely continuous joint distribution function $F_{\mathbf{X}}({\mathbf{x}})$ and a continuous joint probability density function $f_{\mathbf{X}}({\mathbf{x}})$ with a bounded or unbounded support $\mathbb{A}^N \subseteq \mathbb{R}^N$;
\item
possesses absolute finite moments of all orders, that is, for all $\mathbf{j}:=(j_1,\ldots,j_N) \in \mathbb{N}_0^N$,
\begin{equation}
\mathbb{E} \left[ | \mathbf{X}^{\mathbf{j}} | \right] :=
\int_{\Omega}|\mathbf{X}(\omega)^{\mathbf{j}}|d\mathbb{P}(\omega) =
\int_{\mathbb{A}^N} | \mathbf{x}^{\mathbf{j}} | f_{\mathbf{X}}({\mathbf{x}}) d\mathbf{x} < \infty,
\label{2.1}
\end{equation}
where $\mathbf{X}^{\mathbf{j}}=X_1^{j_1}\cdots X_N^{j_N}$ and $\mathbb{E}$ is the expectation operator with respect to the probability measure $\mathbb{P}$ or $f_{\mathbf{X}}(\mathbf{x}) d\mathbf{x}$;
\item
has a joint probability density function $f_{\mathbf{X}}({\mathbf{x}})$, which
\begin{enumerate}
\item
has a compact support, that is, there exists a compact subset $\mathbb{A}^N \subset \mathbb{R}^N$ such that $\mathbb{P}(\mathbf{X} \in \mathbb{A}^N)=1$, or
\item
is exponentially integrable, that is, there exists a real number $a > 0$ such that
\begin{equation}
\int_{\mathbb{A}^N}
\exp{\left( a \| \mathbf{x} \|\right)
f_{\mathbf{X}}(\mathbf{x}) d\mathbf{x}} < \infty,
\label{2.2}
\end{equation}
where $\|\cdot\|:\mathbb{A}^N \to \mathbb{R}_0^+$ is an arbitrary norm; and
\end{enumerate}
\item
has a joint probability density function $f_{\mathbf{X}}({\mathbf{x}})$ with a grid-closed support, that is, there exists a grid for every point $\mathbf{x}$ of $\text{supp}(f_{\mathbf{X}})=\mathbb{A}^N \subseteq \mathbb{R}^N$.
\end{enumerate}
\label{a1}
\end{assumption}

Item (1) of Assumption \ref{a1} is not essential, but it is commonly used in stochastic applications.  Item (2) of Assumption \ref{a1} confirms the existence of an infinite sequence of multivariate orthogonal polynomials consistent with the input probability measure.  Item (3) of Assumption \ref{a1}, in addition to Items (1) and (2), guarantees the input probability measure to be determinate\footnote{The density function of the probability measure, if it is uniquely determined by a sequence of moments, is called determinate or M-determinate.  Otherwise, the density function is indeterminate or M-indeterminate.  This is known as the moment problem with three prominent flavors, depending on the support of the density: Hausdorff moment problem ($\mathbb{A}^N = [0,1]^N$), Stieltjes moment problem ($\mathbb{A}^N = \mathbb{R}_0^{+N}$), and Hamburger moment problem ($\mathbb{A}^N = \mathbb{R}^N $).},
resulting in a complete orthogonal polynomial system and hence a basis of a function space of interest.  Item (4) of Assumption \ref{a1} ensures that for any point $\mathbf{x} \in \text{supp}(f_{\mathbf{X}})$, one can traverse in each coordinate direction and find another point $\mathbf{x}' \in \text{supp}(f_{\mathbf{X}})$ \cite{hooker07}.  Examples of input random variables satisfying Assumption \ref{a1} are multivariate Gaussian, exponential, Laplace, and Dirichlet variables, provided that the density parameters are chosen appropriately.  This assumption, to be explained in the next section, is vitally important for the determinacy of the probability measure and the completeness of orthogonal polynomials.  Examples where Items (1) and (2) are satisfied, but Item (3) is not, are lognormal distributions, select distributions from the Farlie-Gumbel-Morgenstern family, Kotz-type distributions, and cases involving nonlinear transformations of random variables with determinate distributions \cite{kleiber13}.  An example where Item (4) is violated is when the input probability density function is defined on a line. Compared with others, this imposes only a mild restriction on the probability measure. Unless otherwise specified, Assumption \ref{a1} is used throughout the paper.

\section{A generalized ANOVA dimensional decomposition}
Let $y(\mathbf{X}):=y(X_{1},\ldots,X_{N})\in L^2(\Omega, \mathcal{F}, \mathbb{P})$ be a real-valued, square-integrable output random variable defined on the same probability space $(\Omega, \mathcal{F}, \mathbb{P})$.  The vector space $L^2(\Omega, \mathcal{F}, \mathbb{P})$ is a Hilbert space such that
\[
\mathbb{E}\left[ y^2(\mathbf{X})\right]:=
\int_{\Omega} y^2(\mathbf{X}(\omega))d\mathbb{P}(\omega)=
\int_{\mathbb{A}^N} y^2(\mathbf{x})f_{\mathbf{X}}(\mathbf{x})d\mathbf{x} < \infty
\]
with inner product
\[
\left( y(\mathbf{X}),z(\mathbf{X}) \right)_{L^2(\Omega, \mathcal{F}, \mathbb{P})}  :=
\int_{\Omega} y(\mathbf{X}(\omega))z(\mathbf{X}(\omega))d\mathbb{P}(\omega)=
\int_{\mathbb{A}^N} y(\mathbf{x})z(\mathbf{x})f_{\mathbf{X}}(\mathbf{x})d\mathbf{x} =:
\left( y(\mathbf{x}),z(\mathbf{x}) \right)_{f_{\mathbf{X}}d\mathbf{x}}
\]
and norm
\[
\|y(\mathbf{X})\|_{L^2(\Omega, \mathcal{F}, \mathbb{P})}:=
\sqrt{(y(\mathbf{X}),y(\mathbf{X}))_{L^2(\Omega, \mathcal{F}, \mathbb{P})}}=
\sqrt{(y(\mathbf{x}),y(\mathbf{x}))_{f_{\mathbf{X}}d\mathbf{x}}}=:
\|y(\mathbf{x})\|_{f_{\mathbf{X}}d\mathbf{x}}.
\]
It is elementary to show that $y(\mathbf{X}(\omega)) \in L^2(\Omega,\mathcal{F},\mathbb{P})$ if and only if $y(\mathbf{x}) \in L^2(\mathbb{A}^N,\mathcal{B}^{N},f_{\mathbf{X}}d\mathbf{x})$.

\subsection{Generalized ADD}
For $N\in\mathbb{N}$, denote by $\{1,\ldots,N\}$ an index set, so that $u \subseteq \{1,\ldots,N\}$ is a subset, including the empty set $\emptyset$, with cardinality $0 \le |u| \le N$. The complementary subset of $u$ is denoted by $-u:=\{1,\cdots,N\}\backslash u$.
For $\emptyset \ne u \subseteq \{1,\ldots,N\}$, let $\mathbf{X}_u:=(X_{i_1},\ldots,X_{i_{|u|}})^T$, $1\leq i_{1}<\cdots<i_{|u|}\leq N$, a subvector of $\mathbf{X}$, be defined on the abstract probability space $(\Omega^u,\mathcal{F}^u,\mathbb{P}^u)$, where $\Omega^u$ is the sample space of $\mathbf{X}_u$, $\mathcal{F}^u$ is a $\sigma$-algebra on $\Omega^u$, and $\mathbb{P}^u$ is a probability measure. The complementary subvector is defined by $\mathbf{X}_{-u}:=\mathbf{X}_{\{1,\cdots,N\}\backslash u}$.  The corresponding image probability space is $(\mathbb{A}^u,\mathcal{B}^{u},f_{\mathbf{X}_u}d\mathbf{x}_u)$, where $\mathbb{A}^u \subseteq \mathbb{R}^{|u|}$ is the image sample space of $\mathbf{X}_u$, $\mathcal{B}^{u}$ is the Borel $\sigma$-algebra on $\mathbb{A}^u$, and $f_{\mathbf{X}_u}(\mathbf{x}_u):=\int_{\mathbb{A}^{-u}}f_{\mathbf{X}}(\mathbf{x})d\mathbf{x}_{-u}$ is the marginal probability density function of $\mathbf{X}_u$ supported on $\mathbb{A}^u$.

Under Item (4) of Assumption \ref{a1}, fulfilled by common probability
distributions, a square-integrable function $y(\mathbf{X}) \in L^2(\Omega,\mathcal{F},\mathbb{P})$ of input variables $\mathbf{X}$ admits a unique, finite, hierarchical\footnote{The adjective \emph{hierarchical} is used here in the context of dimension-wise hierarchy of input variables.} expansion \cite{hooker07}
\begin{equation}
y(\mathbf{X})={\displaystyle \sum_{u\subseteq\{1,\cdots,N\}}y_{u}(\mathbf{X}_{u})},
\label{3.1}
\end{equation}
referred to as the generalized ADD \cite{rahman14b}, in terms of the component functions
$y_{u}$, $u\subseteq\{1,\cdots,N\}$, of input variables with increasing dimensions. Here, $y_{u}(\mathbf{X}_u)=y_{u}(X_{i_1},\ldots,X_{i_{|u|}})$ is a $|u|$-variate component function of $y$, describing a constant or a $|u|$-variate interaction of $\mathbf{X}_u:=(X_{i_1},\ldots,X_{i_{|u|}})$ on $y$ when $|u|=0$ or $|u|>0$.
Similar to the classical ADD \cite{griebel10,hoeffding48, kuo10, rabitz99, rahman14,sobol93}, the summation in \eqref{3.1} comprises $2^{N}$ component functions, with each function depending on a group of variables indexed by a particular subset of
$\{1,\cdots,N\}$, including the empty set $\emptyset$.

\subsection{Component functions of generalized ADD}
A simple way to link the component functions of ADD, be it classical or generalized, to the function $y$ involves exploiting annihilating conditions.  However, the original annihilating conditions \cite{rabitz99,rahman14,sobol93}, applicable to the classical ADD, are too strong for dependent random variables and hence not appropriate for the generalized ADD.  Therefore, the conditions must be weakened to the degree possible under Item (4) of Assumption \ref{a1}.  Indeed, there exist such weak
annihilating conditions, which mandate all non-constant component functions $y_{u}$ of the generalized ADD to integrate to \emph{zero} with respect to the marginal density function $f_{\mathbf{X}_u}(\mathbf{x}_u)$ of $\mathbf{X}_u$ in each coordinate direction of $u$, that is \cite{hooker07},
\begin{equation}
\int_{\mathbb{A}^{\{i\}}}y_{u}(\mathbf{x}_{u})f_{\mathbf{X}_u}(\mathbf{x}_u)dx_{i}=0\;\mathrm{for}\; i\in u\neq\emptyset.
\label{3.2}
\end{equation}
Compared with the original annihilating conditions, \eqref{3.2} represents a milder version, but it still produces two remarkable properties of the generalized ADD \cite{rahman14b}:

\vspace{0.1in}
\begin{enumerate}[{$(1)$}]
\item
The generalized ADD component functions $y_{u}$,
where $\emptyset\ne u\subseteq\{1,\cdots,N\}$, have zero means, that is,
\begin{equation}
\mathbb{E}\left[y_{u}(\mathbf{X}_{u})\right]=0.
\label{3.3}
\end{equation}
\item
Two distinct generalized ADD component functions $y_{u,G}$
and $y_{v,G}$, where $\emptyset\ne u\subseteq\{1,\cdots,N\}$, $\emptyset\ne v\subseteq\{1,\cdots,N\}$,
and $v\subset u$, are orthogonal, that is, they satisfy the property
\begin{equation}
\mathbb{E}\left[y_{u}(\mathbf{X}_{u})y_{v}(\mathbf{X}_{v})\right]=0.
\label{3.4}
\end{equation}
\end{enumerate}

Applying the weak annihilating conditions \eqref{3.2} and the second-moment properties from \eqref{3.3} and \eqref{3.4}, the master formulae for all component functions $y_{u}$, $u\subseteq\{1,\cdots,N\}$, of the generalized ADD are \cite{rahman14b}
\begin{subequations}
\begin{align}
y_{\emptyset} & =\int_{\mathbb{A}^{N}}y(\mathbf{x})f_{\mathbf{X}}(\mathbf{x})d\mathbf{x},
\label{3.5a}\\
y_{u}(\mathbf{X}_{u}) & =\int_{\mathbb{A}^{-u}}y(\mathbf{X}_{u},\mathbf{x}_{-u})f_{\mathbf{X}_{-u}}(\mathbf{x}_{-u})d\mathbf{x}_{-u}-{\displaystyle \sum_{v\subset u}y_{v}(\mathbf{X}_{v})}-\nonumber \\
 & \;\;\;{\displaystyle \sum_{{\textstyle {\emptyset\ne v\subseteq\{1,\cdots,N\}\atop v\cap u\ne\emptyset,v\nsubseteq u}}}{\displaystyle \int_{\mathbb{A}^{v\cap-u}}y_{v}(\mathbf{X}_{v \cap u},\mathbf{x}_{v \cap -u})
 f_{\mathbf{X}_{v\cap-u}}(\mathbf{x}_{v\cap-u})d\mathbf{x}_{v\cap-u}}}.
 \label{3.5b}
\end{align}
\end{subequations}
Although the original formulae were derived using $\mathbb{A}^N=\mathbb{R}^N$ \cite{rahman14b}, the extension for the case of $\mathbb{A}^N \subseteq \mathbb{R}^N$ is trivial.  Here, $(\mathbf{X}_{u},\mathbf{x}_{-u})$ denotes an $N$-dimensional vector whose $i$th component is $X_{i}$ if $i\in u$ and $x_{i}$ if $i\notin u$.  When $u=\emptyset$, both sums in \eqref{3.5b} vanish, resulting in the expression of the constant function $y_{\emptyset}$ in \eqref{3.5a}. When $u=\{1,\cdots,N\}$, the integration in the first line of \eqref{3.5b} is on the empty set and the sum in the second line of \eqref{3.5b} vanishes, reproducing \eqref{3.1} and hence finding the last function $y_{\{1,\cdots,N\}}$. Indeed, all component functions of $y$ can be obtained by interpreting literally \eqref{3.5b}.

From \eqref{3.5a} and \eqref{3.5b}, two important observations stand out.  First, the constant component function of ADD, whether classical or generalized, is the same as the expected value of $y(\mathbf{X})$. Second, in contrast to the classical ADD, the component functions of the generalized ADD, satisfying \eqref{3.5b}, are coupled and must be solved simultaneously. In this case, for a given $\emptyset\neq u\subseteq\{1,\cdots,N\}$, the component function $y_{u}$ depends not only on the component functions $y_{v}$, where $v\subset u$, but also on the component functions $y_{v}$, where $v\cap u\ne\emptyset$, $v\nsubseteq u$.  Readers interested in further details of the generalized ADD are directed to a prior work of the author \cite{rahman14b}.

The generalized ADD discussed in the preceding paragraph can also be obtained by splitting the Hilbert space
\begin{equation}
L^2(\mathbb{A}^N,\mathcal{B}^{N},f_{\mathbf{X}}d\mathbf{x}) =
\displaystyle \boldsymbol{1} \oplus  \bigcup_{\emptyset \ne u \subseteq \{1,\ldots,N\}} \mathcal{W}_u,
\label{3.6}
\end{equation}
where
\begin{equation}
\mathcal{W}_u :=
\left\{y_u \in L^2(\mathbb{A}^u,\mathcal{B}^{u},f_{\mathbf{X}_u}d\mathbf{x}_u):
\int_{\mathbb{A}^{\{i\}}}y_{u}(\mathbf{x}_{u})f_{\mathbf{X}_u}(\mathbf{x}_u)dx_{i}=0\;\mathrm{for}\; i\in u\neq\emptyset
\right\}
\label{3.7}
\end{equation}
is a generalized ADD subspace comprising $|u|$-variate component functions of $y$.  However, the subspaces $\mathcal{W}_u$, $\emptyset \ne u\subseteq\{1,\ldots,N\}$, are in general infinite-dimensional; therefore, further discretization of $\mathcal{W}_u$ is necessary.  For instance, by introducing the measure-consistent multivariate orthogonal polynomial basis to be introduced in Section 4, a component function $y_u(\mathbf{X}_u) \in \mathcal{W}_u$ can be expressed as a linear combination of these basis functions.  The result is a polynomial refinement of the generalized ADD, namely GPDD, which is the principal motivation for this work.

\section{Multivariate orthogonal polynomials}
Let $\mathbf{X}$ be an input random vector with a general probability measure $f_{\mathbf{X}}(\mathbf{x})d\mathbf{x}$ on $\mathbb{A}^N$, satisfying Items (1)-(3) of Assumption \ref{a1}. It is elementary to show that the same assumptions are also valid for any subvector $\mathbf{X}_u:=(X_{i_1},\ldots,X_{i_{|u|}})^T$ with the marginal probability measure $f_{\mathbf{X}_u}(\mathbf{x}_u)d\mathbf{x}_u$ on $\mathbb{A}^u$. When $\emptyset \ne u \subseteq \{1,\ldots,N\}$, a $|u|$-dimensional multi-index is denoted by $\mathbf{j}_u:=(j_{i_1},\ldots,j_{i_{|u|}})\in \mathbb{N}_0^{|u|}$ with total degree $|\mathbf{j}_u|:=j_{i_1}+ \cdots +j_{i_{|u|}}$, where $j_{i_p} \in \mathbb{N}_0$, $p=1,\ldots,|u|$, represents the $p$th component of $\mathbf{j}_u$.\footnote{The same symbol $|\cdot|$ is used for designating both the cardinality of a set and the degree of a multi-index in this paper.}

\subsection{Measure-consistent orthogonal polynomials}
Denote by
\[
\Pi^{u}:=\mathbb{R}[\mathbf{x}_u]=\mathbb{R}[x_{i_1},\ldots,x_{i_{|u|}}]
\]
the space of all real polynomials in $\mathbf{x}_u$.  For any polynomial pair $P_u,Q_u \in \Pi^u$, $\emptyset \ne u\subseteq\{1,\ldots,N\}$, define an inner product
\begin{equation}
{\left(P_u,Q_u \right)}_{f_{\mathbf{X}_u}d\mathbf{x}_u}:=
\int_{\mathbb{A}^{u}}P_u(\mathbf{x}_u)Q_u(\mathbf{x}_u)
f_{\mathbf{X}_u}(\mathbf{x}_u)d\mathbf{x}_u=
\mathbb{E}\left[ P_u(\mathbf{X}_u)Q_u(\mathbf{X}_u) \right]
\label{4.1}
\end{equation}
on $\Pi^u$ with respect to the measure $f_{\mathbf{X}_u}(\mathbf{x}_u)d\mathbf{x}_u$ and the induced norm
\[
\|P_u\|_{f_{\mathbf{X}_u}d\mathbf{x}_u}:=
\sqrt{{(P_u,P_u)}_{f_{\mathbf{X}_u}d\mathbf{x}_u}}=
\left(\int_{\mathbb{A}^{u}}P_u^2(\mathbf{x}_u)f_{\mathbf{X}_u}(\mathbf{x}_u)d\mathbf{x}_u \right)^{1/2}=
\sqrt{\mathbb{E}\left[ P_u^2(\mathbf{X}_u) \right]}.
\]
The polynomials $P_u \in \Pi^u$ and $Q_u \in \Pi^u$ are called orthogonal to each other with respect to $f_{\mathbf{X}_u}(\mathbf{x}_u)d\mathbf{x}_u$ if ${(P_u,Q_u)}_{f_{\mathbf{X}_u}d\mathbf{x}_u}=0$. Moreover, a polynomial $P_u \in \Pi^u$ is said to be an orthogonal polynomial with respect to $f_{\mathbf{X}_u}(\mathbf{x}_u)d\mathbf{x}_u$ if it is orthogonal to all polynomials of lower degree, that is, if \cite{dunkl14}
\begin{equation}
{\left(P_u,Q_u \right)}_{f_{\mathbf{X}_u}d\mathbf{x}_u}=0 ~\forall Q_u \in \Pi^u ~\text{with}~
\deg Q_u < \deg P_u.
\label{4.2}
\end{equation}
Under Items (1) and (2) of Assumption \ref{a1}, moments of $\mathbf{X}_u$ of all orders exist and are finite, so that the inner product in \eqref{4.1} is well defined.  As the inner product is positive-definite, clearly $\|P_u\|_{f_{\mathbf{X}_u}d\mathbf{x}_u}> 0$ for all non-zero $P_u \in \Pi^u$.  Then, there exists an infinite set of multivariate orthogonal polynomials, say,
$\{ P_{u,\mathbf{j}_u}(\mathbf{x}_u):\mathbf{j}_u \in \mathbb{N}_0^{|u|} \}$, $P_{u,\boldsymbol{0}}=1$, $P_{u,\mathbf{j}_u} \ne 0$, which is consistent with the probability measure $f_{\mathbf{X}_u}(\mathbf{x}_u)d\mathbf{x}_u$, satisfying
\begin{equation}
{\left(P_{u,\mathbf{j}_u},P_{u,\mathbf{k}_u} \right)}_{f_{\mathbf{X}_u}d\mathbf{x}_u} = 0
~\text{whenever}~|\mathbf{j}_u| \ne |\mathbf{k}_u|
\label{4.3}
\end{equation}
for $\mathbf{k}_u \in \mathbb{N}_0^{|u|}$. Here, the multi-index $\mathbf{j}_u$ of the multivariate polynomial $P_{u,\mathbf{j}_u}(\mathbf{x}_u)$ refers to its total degree $|\mathbf{j}_u|$.  Clearly, each $P_{u,\mathbf{j}_u} \in \Pi^u$ is an orthogonal polynomial satisfying \eqref{4.2}. This means that $P_{u,\mathbf{j}_u}$ is orthogonal to all polynomials of different degrees, but it may not be orthogonal to other orthogonal polynomials of the same degree.

Consider for each $l \in \mathbb{N}_0$ the elements of the set $\{ \mathbf{j}_u \in \mathbb{N}_0^{|u|}: |\mathbf{j}_u|=l \}$, $l \in \mathbb{N}_0$, which is arranged as $\mathbf{j}_u^{(1)},\ldots,\mathbf{j}_u^{(K_{u,l})}$ according to a monomial order of choice.  The set has cardinality
\[
K_{u,l}=\#\left\{\mathbf{j}_u \in \mathbb{N}_0^{|u|}: |\mathbf{j}_u|=l \right\} =
\binom{|u|+l-1}{l}.
\]
Denote by
\[
\mathbf{x}_{u,l} = (\mathbf{x}_u^{\mathbf{j}_u^{(1)}},\ldots,\mathbf{x}_u^{\mathbf{j}_u^{(K_{u,l})}})^T
\]
the $K_{u,l}$-dimensional column vector whose elements are the monomials
$\mathbf{x}_u^{\mathbf{j}_u}$ for $|\mathbf{j}_u|=l$ and by
\[
\mathbf{P}_{u,l}(\mathbf{x}_u):=
(P_{u,\mathbf{j}_u^{(1)}}(\mathbf{x}_u),\ldots,P_{u,\mathbf{j}_u^{(K_{u,l})}}(\mathbf{x}_u))^T
\]
the $K_{u,l}$-dimensional column vector whose elements are obtained from the polynomial sequence
$\{P_{u,\mathbf{j}_u}(\mathbf{x}_u)\}_{|\mathbf{j}_u|=l}$, both arranged in the aforementioned order. This leads to a formal definition of multivariate orthogonal polynomials.

\begin{definition}[Dunkl and Xu \cite{dunkl14}]
Let ${\left(\cdot,\cdot \right)}_{f_{\mathbf{X}_u}d\mathbf{x}_u}:\Pi^u \times \Pi^u \to \mathbb{R}$ be an inner product. A set of polynomials $\{ P_{u,\mathbf{j}_u}(\mathbf{x}_u): |\mathbf{j}_u|=l, \mathbf{j}_u \in \mathbb{N}_0^{|u|}\}$, $P_{u,\mathbf{j}_u}(\mathbf{x}_u) \in \Pi_l^{|u|}$, of degree $l$ or its $K_{u,l}$-dimensional
column vector $\mathbf{P}_{u,l}(\mathbf{x}_u)$, is said to be orthogonal with respect to the inner product $(\cdot,\cdot)_{f_{\mathbf{X}_u}d\mathbf{x}_u}$, or alternatively with respect to the probability measure $f_{\mathbf{X}_u}(\mathbf{x}_u)d\mathbf{x}_u$, if, for $l,r \in \mathbb{N}_0$,
\begin{equation}
{\left(\mathbf{x}_{u,r},\mathbf{P}_{u,l}^T(\mathbf{x}_u) \right)}_{f_{\mathbf{X}_u}d\mathbf{x}_u}:=
\int_{\mathbb{A}^u}\mathbf{x}_{u,r}\mathbf{P}_{u,l}^T(\mathbf{x}_u)f_{\mathbf{X}_u}(\mathbf{x}_u)d\mathbf{x}_u=:
\mathbb{E} \left[ \mathbf{X}_{u,r}\mathbf{P}_{u,l}^T(\mathbf{X}_u) \right]
=\boldsymbol{0},~~l > r,
\label{4.4}
\end{equation}
where
\begin{equation}
\mathbf{S}_{u,l}:=
{\left(\mathbf{x}_{u,l},\mathbf{P}_{u,l}^T(\mathbf{x}_u) \right)}_{f_{\mathbf{X}_u}d\mathbf{x}_u}:=
\int_{\mathbb{A}^u}\mathbf{x}_{u,l}\mathbf{P}_{u,l}^T(\mathbf{x}_u)f_{\mathbf{X}_u}(\mathbf{x}_u)d\mathbf{x}_u=:
\mathbb{E} \left[ \mathbf{X}_{u,l}\mathbf{P}_{u,l}^T(\mathbf{X}_u) \right]
\label{4.5}
\end{equation}
is a $K_{u,l} \times K_{u,l}$ invertible matrix.
\label{d1}
\end{definition}

Using the vector notation, one can write
\[
\mathbf{P}_{u,r}(\mathbf{x}_u)=\mathbf{H}_{u,r,r}\mathbf{x}_{u,r}+\mathbf{H}_{u,r,r-1}\mathbf{x}_{u,r-1}+\cdots+
\mathbf{H}_{u,r,0}\mathbf{x}_{u,0},~~r \in \mathbb{N}_0,
\]
where $\mathbf{H}_{u,r,r-k}$, $k=0,1,\ldots,r$, are various coefficient matrices of size $K_{u,r} \times K_{u,r-k}$.  Then, using the properties in \eqref{4.4} and \eqref{4.5}, the inner product
\[
{(\mathbf{P}_{u,r}(\mathbf{x}_u),\mathbf{P}_{u,l}^T(\mathbf{x}_u))}_{f_{\mathbf{X}_u}d\mathbf{x}_u}=
\begin{cases}
\boldsymbol{0},                         & l > r, \\
\mathbf{H}_{u,l,l} \mathbf{S}_{u,l}     & l = r.
\end{cases}
\]
Therefore, Definition \ref{d1} agrees with the usual notion of orthogonal polynomials satisfying \eqref{4.2}.  Perhaps the most prominent example of classical multivariate orthogonal polynomials is the case of multivariate Hermite polynomials, which are consistent with the measure defined by a Gaussian density \cite{erdelyi53,holmquist96}.  Readers interested to learn more about orthogonal polynomials in multiple variables with respect to other measures are referred to the works of Appell and de F\'{e}riet \cite{appell26}, Erdelyi \cite{erdelyi53}, Krall and Sheffer \cite{krall67}, and Dunkl and Xu \cite{dunkl14}.

For general probability measures, established numerical techniques, such as the Gram-Schmidt orthogonalization process \cite{golub96}, can be applied to a sequence of monomials $\{ \mathbf{x}_u^{\mathbf{j}_u}\}_{\mathbf{j}_u\in \mathbb{N}_0^{|u|}}$ with respect to the inner product in \eqref{4.1} to generate a corresponding sequence of any measure-consistent orthogonal polynomials.  However, an important difference between univariate polynomials and multivariate polynomials is the lack of an obvious natural order in the latter. The natural order for monomials of univariate polynomials is the degree order; that is, one orders monomials according to their degree.  For multivariate polynomials, there are many options, such as lexicographic order, graded lexicographic order, and reversed graded lexicographic order, to name just three.  There is no natural choice, and different orders will give different sequences of orthogonal polynomials from the Gram-Schmidt process. It is important to emphasize that the space of multivariate orthogonal polynomials for a generally non-product-type density function cannot be constructed by the tensor product of the spaces of univariate orthogonal polynomials.

Once the multivariate orthogonal polynomials are obtained, they can be scaled to generate their standardized version, as follows.

\begin{definition}
A standardized multivariate orthogonal polynomial $\Psi_{u,\mathbf{j}_u}(\mathbf{x}_u$), $\emptyset \ne u \subseteq \{1,\ldots,N\}$, $\mathbf{j}_u \in \mathbb{N}_0^{|u|}$, of degree $|\mathbf{j}_u|=j_{i_1}+\cdots+j_{i_{|u|}}$ that is consistent with the probability measure $f_{\mathbf{X}_u}(\mathbf{x}_u)d\mathbf{x}_u$ is defined as
\begin{equation}
\Psi_{u,\mathbf{j}_u}(\mathbf{x}_u) :=
\frac{P_{u,\mathbf{j}_u}(\mathbf{x}_u)}
{\|P_{u,\mathbf{j}_u}\|_{f_{\mathbf{X}_u}d\mathbf{x}_u}} =
\frac{P_{u,\mathbf{j}_u}(\mathbf{x}_u)}
{\sqrt{\mathbb{E}[ P_{u,\mathbf{j}_u}^2(\mathbf{X}_u)]}}.
\label{4.5b}
\end{equation}
\label{d2}
\end{definition}
The standardization is not absolutely required, but it results in a relatively simpler expression of GPDD, to be presented in Section 5.

\subsection{Dimension-wise decomposition of polynomial spaces}
A decomposition of polynomial spaces entailing dimension-wise splitting leads to GPDD.  Here, to facilitate such splitting of the polynomial space $\Pi^u$ for any $\emptyset \ne u \subseteq \{1,\ldots,N\}$, limit the component $j_{i_p}$ associated with the $i_p$th variable, where $i_p \in u \subseteq \{1,\ldots,N\}$, $p=1,\ldots,|u|$, and $|u|>0$, to take on only positive integer values. In consequence, $\mathbf{j}_u:=(j_{i_1},\ldots,j_{i_{|u|}})\in \mathbb{N}^{|u|}$, the multi-index of $P_{u,\mathbf{j}_u}(\mathbf{x}_u)$, has degree $|\mathbf{j}_u|=j_{i_1}+\cdots+j_{i{|u|}}$, varying from $|u|$ to $\infty$ as $j_{i_1} \ne \cdots j_{i_{|u|}} \ne 0$.

For $\mathbf{j}_u \in \mathbb{N}^{|u|}$ and $\mathbf{x}_u :=(x_{i_1},\ldots,x_{i_{|u}})$, a monomial in the variables $x_{i_1},\ldots,x_{i_{|u|}}$ is the product $\mathbf{x}_u^{\mathbf{j}_u}=x_{i_1}^{j_{i_1}}\ldots x_{i_{|u|}}^{j_{i_{|u|}}}$ and has a total degree $|\mathbf{j}_u|$.  A linear combination of $\mathbf{x}_u^{\mathbf{j}_u}$, where $|\mathbf{j}_u|= l$, $|u| \le l < \infty$, is a homogeneous polynomial in $\mathbf{x}_u$ of degree $l$.  For $\emptyset \ne u \subseteq \{1,\ldots,N\}$, denote by
\[
\mathcal{Q}_{l}^{u}:=\text{span}\{\mathbf{x}_u^{\mathbf{j}_u}:|\mathbf{j}_u|=l,\,
\mathbf{j}_u\in\mathbb{N}^{|u|}\}, ~|u| \le l < \infty,
\]
the space of homogeneous polynomials in $\mathbf{x}_u$ of degree $l$ where the individual degree of each variable is non-zero and by
\[
\Theta_{m}^{u}:=\text{span}\{\mathbf{x}_u^{\mathbf{j}_u}:|u| \le |\mathbf{j}_u| \le m,\,
\mathbf{j}_u\in\mathbb{N}^{|u|}\}, ~|u| \le m < \infty,
\]
the space of polynomials in $\mathbf{x}_u$ of degree at least $|u|$ and at most $m$ where the individual degree of each variable is non-zero.  The dimensions of the vector spaces $\mathcal{Q}_{l}^{u}$ and $\Theta_{m}^{u}$, respectively, are
\[
\dim\mathcal{Q}_{l}^{u}=\#\left\{\mathbf{j}_u \in \mathbb{N}^{|u|}: |\mathbf{j}_u|=l \right\} =
\displaystyle \binom{l-1}{|u|-1}
\]
and
\[
\dim\Theta_{m}^{u} = \sum_{l=|u|}^m \dim\mathcal{Q}_{l}^{u} =
\displaystyle \sum_{l=|u|}^m \displaystyle \binom{l-1}{|u|-1} =
\displaystyle \binom{m}{|u|}.
\]

Let $\mathcal{Z}_{|u|}^{u}:= \Theta_{|u|}^{u}$.  For each $|u|+1 \le l < \infty$, denote by $\mathcal{Z}_l^u \subset \Theta_l^u$ the space of orthogonal polynomials of degree exactly $l$ that are orthogonal to all polynomials in $\Theta_{l-1}^u$, that is,
\[
\mathcal{Z}_{l}^{u}:=
\{P_u \in\Theta_{l}^{u}:{(P_u,Q_u})_{f_{\mathbf{X}_u}d\mathbf{x}_u}=0~\forall \:Q_u\in\Theta_{l-1}^{u}\},
~|u|+1 \le l < \infty.
\]
Then $\mathcal{Z}_l^u$, provided that the support of $f_{\mathbf{X}_u}(\mathbf{x}_u)$ has non-empty interior, is a vector space of dimension
\[
M_{u,l}:=\dim\mathcal{Z}_{l}^{u}=\dim\mathcal{Q}_{l}^{u}=
\displaystyle{\binom{l-1}{|u|-1}}.
\]
Many choices exist for the basis of $\mathcal{Z}_{l}^{u}$. Here, to be formally proved in Section 4.3, select $\{P_{u,\mathbf{j}_u}(\mathbf{x}_u): |\mathbf{j}_u|=l, \mathbf{j}_u \in \mathbb{N}^{|u|} \} \subset \mathcal{Z}_l^u$ to be a basis of $\mathcal{Z}_{l}^{u}$, comprising $M_{u,l}$ number of basis functions. Each basis function $P_{u,\mathbf{j}_u}(\mathbf{x}_u)$ is a multivariate orthogonal polynomial of degree $|\mathbf{j}_u|$ as defined earlier.  Clearly,
\[
\mathcal{Z}_l^u=\text{span}\{ P_{u,\mathbf{j}_u}(\mathbf{x}_u): |\mathbf{j}_u|=l, \mathbf{j}_u \in \mathbb{N}^{|u|} \},~|u| \le l < \infty.
\]

According to \eqref{4.3}, $P_{u,\mathbf{j}_u}(\mathbf{X}_u)$ is orthogonal to $P_{u,\mathbf{k}_u}(\mathbf{X}_u)$ whenever $|\mathbf{j}_u| \ne |\mathbf{k}_u|$. Therefore, any two distinct polynomial subspaces $\mathcal{Z}_{l}^{u}$ and $\mathcal{Z}_{l'}^{u}$, where $\emptyset \ne u \subseteq \{1,\ldots,N\}$, $|u| \le l < \infty$, and $|u| \le l' < \infty$, are orthogonal whenever $l \ne l'$. In consequence, there exists an orthogonal decomposition of
\[
\begin{array}{rcl}
\Theta_m^u & = & \displaystyle \bigoplus_{l=|u|}^m \mathcal{Z}_{l}^{u}=\bigoplus_{l=|u|}^m \text{span}\{ P_{u,\mathbf{j}_u}(\mathbf{x}_u): |\mathbf{j}_u|=l, \mathbf{j}_u \in \mathbb{N}^{|u|} \} \\
           & = & \text{span}\{ P_{u,\mathbf{j}_u}(\mathbf{x}_u): |u| \le |\mathbf{j}_u|\le m, \mathbf{j}_u \in \mathbb{N}^{|u|} \},
\end{array}
\]
where the symbol $\oplus$ represents the orthogonal sum of vector spaces. Moreover, this facilitates a dimension-wise splitting of
\begin{equation}
\begin{array}{rcl}
\Pi^u  &  =  &  \displaystyle \boldsymbol{1} \oplus  \bigcup_{\emptyset \ne v \subseteq u}~ \bigoplus_{l=|v|}^\infty \mathcal{Z}_{l}^{v} = \displaystyle \boldsymbol{1} \oplus  \bigcup_{\emptyset \ne v \subseteq u}~\bigoplus_{l=|v|}^\infty \text{span}\{ P_{v,\mathbf{j}_v}(\mathbf{x}_v): |\mathbf{j}_v|=l, \mathbf{j}_v \in \mathbb{N}^{|v|} \} \\
       &  =  &  \displaystyle \boldsymbol{1} \oplus  \bigcup_{\emptyset \ne v \subseteq u}~ \text{span}\{ P_{v,\mathbf{j}_v}(\mathbf{x}_v): \mathbf{j}_v \in \mathbb{N}^{|v|} \},
\end{array}
\label{4.6}
\end{equation}
where $\boldsymbol{1}:=\text{span}\{1\}$, the constant subspace, needs to be added because the subspace $\mathcal{Z}_{l}^{v}$ excludes constant functions.

Recall that $\Pi^N$ is the space of all real polynomials in $\mathbf{x}$. Then, setting $u=\{1,\ldots,N\}$ in \eqref{4.6} first and then swapping $v$ for $u$ yields yet another dimension-wise splitting of
\begin{equation}
\begin{array}{rcl}
\Pi^N  & = &
\displaystyle \boldsymbol{1} \oplus  \bigcup_{\emptyset \ne u \subseteq \{1,\ldots,N\}}~ \bigoplus_{l=|u|}^\infty \mathcal{Z}_{l}^{u}  \\
       & = &
\displaystyle \boldsymbol{1} \oplus  \bigcup_{\emptyset \ne u \subseteq \{1,\ldots,N\}} \bigoplus_{l=|u|}^\infty
\text{span}\{ P_{u,\mathbf{j}_u}(\mathbf{x}_u): |\mathbf{j}_u|=l, \mathbf{j}_u \in \mathbb{N}^{|u|} \} \\
       & = &
\displaystyle \boldsymbol{1} \oplus  \bigcup_{\emptyset \ne u \subseteq \{1,\ldots,N\}}
\text{span}\{ P_{u,\mathbf{j}_u}(\mathbf{x}_u): \mathbf{j}_u \in \mathbb{N}^{|u|} \}.
\end{array}
\label{4.7}
\end{equation}
Given the dimension-wise splitting of $\Pi^N$, any square-integrable function of input random vector $\mathbf{X}$ can be expanded as a Fourier-like series of hierarchically ordered multivariate orthogonal polynomials in $\mathbf{X}_u$, $\emptyset \ne u \subseteq \{1,\ldots,N\}$.  The expansion defines GPDD, to be formally presented and analyzed in Section 5.

\subsection{Completeness of orthogonal polynomials and basis}
An important question regarding orthogonal polynomials is whether they are complete and constitute a basis in a function space of interest, such as a Hilbert space.  Let  $L^2(\mathbb{A}^N,\mathcal{B}^{N},f_{\mathbf{X}}d\mathbf{x})$
represent a Hilbert space of square-integrable functions with respect to the probability measure $f_{\mathbf{X}}(\mathbf{x}) d\mathbf{x}$ supported on $\mathbb{A}^{N}$. The following two propositions show that, indeed, orthogonal polynomials span various spaces of interest.

\begin{proposition}
\label{p1}
Let $\mathbf{X}:=(X_{1},\ldots,X_{N})^T:(\Omega,\mathcal{F})\to(\mathbb{A}^{N},\mathcal{B}^{N})$, $N\in\mathbb{N}$, be an $N$-dimensional random vector with multivariate probability density function $f_{\mathbf{X}}(\mathbf{x})$, satisfying Items (1)-(3) of Assumption \ref{a1} and $\mathbf{X}_u:=(X_{i_1},\ldots,X_{i_{|u|}})^T:(\Omega^u,\mathcal{F}^u)\to(\mathbb{A}^u,\mathcal{B}^u)$, $\emptyset \ne u \subseteq \{1,\ldots,N\}$, be a subvector of $\mathbf{X}$. Then $\{P_{u,\mathbf{j}_u}(\mathbf{x}_u): |\mathbf{j}_u|=l, \mathbf{j}_u \in \mathbb{N}^{|u|} \}$, the set of multivariate orthogonal polynomials of degree $l$, $|u| \le l < \infty$, consistent with the probability measure $f_{\mathbf{X}_u}(\mathbf{x}_u)d\mathbf{x}_u$, is a basis of $\mathcal{Z}_{l}^{u}$.
\end{proposition}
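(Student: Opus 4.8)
The plan is to realize the asserted family as the $\phi$-preimage of a monomial basis of $\mathcal{Q}_l^u$ under the leading-term map $\phi$, settling membership, linear independence, and spanning in one stroke. As a preliminary I would record that, under Items (1)-(3) of Assumption \ref{a1}, all moments of $\mathbf{X}_u$ are finite and the continuous density $f_{\mathbf{X}_u}$ is strictly positive on a non-empty open subset of its support; consequently the inner product \eqref{4.1} is positive-definite on $\Pi^u$, no non-zero polynomial is null, and the orthogonal projection onto the finite-dimensional subspace $\Theta_{l-1}^u$ exists uniquely. I would keep in view the orthogonal splitting $\Theta_l^u=\Theta_{l-1}^u\oplus\mathcal{Z}_l^u$ that defines $\mathcal{Z}_l^u$, and the fact that $\{\mathbf{x}_u^{\mathbf{j}_u}:|\mathbf{j}_u|=l,\ \mathbf{j}_u\in\mathbb{N}^{|u|}\}$ is, by definition, a basis of $\mathcal{Q}_l^u$ of cardinality $\binom{l-1}{|u|-1}$.

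For membership, fix $\mathbf{j}_u\in\mathbb{N}^{|u|}$ with $|\mathbf{j}_u|=l$ and form $P_{u,\mathbf{j}_u}$ by orthogonalizing the all-positive monomial $\mathbf{x}_u^{\mathbf{j}_u}$ against $\Theta_{l-1}^u$, so that $P_{u,\mathbf{j}_u}=\mathbf{x}_u^{\mathbf{j}_u}+R$ with $R\in\Theta_{l-1}^u$; thus $P_{u,\mathbf{j}_u}\in\Theta_l^u$ and, by construction, $(P_{u,\mathbf{j}_u},Q_u)_{f_{\mathbf{X}_u}d\mathbf{x}_u}=0$ for every $Q_u\in\Theta_{l-1}^u$, i.e. $P_{u,\mathbf{j}_u}\in\mathcal{Z}_l^u$. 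Its orthogonality to each $P_{u,\mathbf{k}_u}$ with $|\mathbf{k}_u|\neq l$ is then exactly \eqref{4.3}, since $P_{u,\mathbf{k}_u}\in\Theta_{l-1}^u$ whenever $|\mathbf{k}_u|<l$. The decisive observation is that the projection is taken only against $\Theta_{l-1}^u$, whose spanning monomials are all-positive, so $R$ introduces no monomial in which a coordinate of $u$ is absent; this is what keeps $P_{u,\mathbf{j}_u}$ inside $\Theta_l^u$ rather than merely inside the full degree-$l$ space $\Pi_l^{|u|}$.

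To upgrade membership to a basis I would introduce $\phi:\mathcal{Z}_l^u\to\mathcal{Q}_l^u$ sending each $P$ to its degree-$l$ homogeneous part, well defined because $\mathcal{Z}_l^u\subseteq\Theta_l^u$. If $\phi(P)=0$ then $\deg P\le l-1$, so $P\in\Theta_{l-1}^u$; but $P\in\mathcal{Z}_l^u$ is orthogonal to $\Theta_{l-1}^u$, forcing $(P,P)_{f_{\mathbf{X}_u}d\mathbf{x}_u}=0$ and hence $P=0$ by positive-definiteness. Thus $\phi$ is injective and $\dim\mathcal{Z}_l^u\le\binom{l-1}{|u|-1}$. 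Since $\phi(P_{u,\mathbf{j}_u})=\mathbf{x}_u^{\mathbf{j}_u}$ ranges over a basis of $\mathcal{Q}_l^u$, the $\binom{l-1}{|u|-1}$ polynomials $P_{u,\mathbf{j}_u}$ have linearly independent images and so are themselves linearly independent, giving $\dim\mathcal{Z}_l^u\ge\binom{l-1}{|u|-1}$. The two inequalities yield $\dim\mathcal{Z}_l^u=\binom{l-1}{|u|-1}=M_{u,l}$, so the linearly independent family $\{P_{u,\mathbf{j}_u}:|\mathbf{j}_u|=l,\ \mathbf{j}_u\in\mathbb{N}^{|u|}\}$ of exactly that cardinality is a basis of $\mathcal{Z}_l^u$.

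I expect the main obstacle to be the all-positivity bookkeeping in the membership step, namely certifying $P_{u,\mathbf{j}_u}\in\Theta_l^u$ and not merely $P_{u,\mathbf{j}_u}\in\Pi_l^{|u|}$. The classical polynomials of Section 4.1 are orthogonalized against \emph{every} lower-degree polynomial and therefore generically acquire monomials in which some coordinate of $u$ is absent, placing them outside $\Theta_l^u$; the polynomials relevant to $\mathcal{Z}_l^u$ must instead be orthogonalized only against $\Theta_{l-1}^u$. Making precise that this weaker orthogonalization both preserves membership in $\Theta_l^u$ and still yields the orthogonality relations \eqref{4.2}-\eqref{4.3} within the all-positive scale $\{\Theta_m^u\}_{m\ge|u|}$ is the delicate part. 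A secondary point I would flag is the positive-definiteness of \eqref{4.1} on $\Pi^u$, on which both the uniqueness of the projection and the injectivity of $\phi$ rest, and which is precisely where the non-empty-interior character of $\mathrm{supp}(f_{\mathbf{X}_u})$ is used.
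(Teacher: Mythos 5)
Your proof is correct, and it takes a genuinely different route from the paper's. The paper argues from Definition \ref{d1}: setting $\mathbf{a}_{u,l}^T\mathbf{P}_{u,l}(\mathbf{x}_u)=0$, multiplying on the right by $\mathbf{x}_{u,l}^T$, and integrating yields $\mathbf{S}_{u,l}\mathbf{a}_{u,l}=\mathbf{0}$, so invertibility of the moment matrix $\mathbf{S}_{u,l}$ in \eqref{4.5} gives linear independence of the full degree-$l$ family indexed by $\mathbb{N}_0^{|u|}$; the all-positive subfamily inherits independence, and the proof closes by matching its cardinality against $M_{u,l}=\dim\mathcal{Z}_l^u$. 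Two ingredients are taken for granted there: the membership $\{P_{u,\mathbf{j}_u}:\mathbf{j}_u\in\mathbb{N}^{|u|},\,|\mathbf{j}_u|=l\}\subset\mathcal{Z}_l^u$ (the ``selection'' made in Section 4.2) and the dimension formula $\dim\mathcal{Z}_l^u=\binom{l-1}{|u|-1}$ (asserted in Section 4.2, not proved). Your argument supplies exactly these two ingredients: membership via the explicit construction $P_{u,\mathbf{j}_u}=\mathbf{x}_u^{\mathbf{j}_u}-\mathrm{proj}_{\Theta_{l-1}^u}\mathbf{x}_u^{\mathbf{j}_u}$, and the dimension count together with linear independence via injectivity of the leading-homogeneous-part map $\phi:\mathcal{Z}_l^u\to\mathcal{Q}_l^u$. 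The paper's route buys brevity by leaning on the invertible $\mathbf{S}_{u,l}$; yours buys a self-contained proof and, as a byproduct of the injectivity of $\phi$, the uniqueness of an element of $\mathcal{Z}_l^u$ with prescribed top-degree part. Moreover, the tension you flag is real, not an artifact of your approach: polynomials orthogonal to \emph{all} polynomials of lower degree in the sense of \eqref{4.2} --- the objects for which $\mathbf{S}_{u,l}$ is defined, and the ones displayed in Table \ref{table1}, which visibly contain monomials omitting a variable of $u$ --- do not in general lie in $\Theta_l^u$, and hence cannot literally constitute a basis of $\mathcal{Z}_l^u\subset\Theta_l^u$. The paper's proof passes over this point; your weaker orthogonalization against $\Theta_{l-1}^u$ alone is the reading under which the proposition holds, so your route is not only different but closes a gap in the paper's own argument.
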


\begin{proof}
Under Items (1) and (2) of Assumption \ref{a1}, orthogonal polynomials with respect to the probability measure $f_{\mathbf{X}_u}(\mathbf{x}_u)d\mathbf{x}_u$ exist.  Let $\mathbf{a}_{u,l}^T=(a_{u,l}^{(1)},\ldots,a_{u,l}^{(K_{u,l})})$ be a row vector comprising some
constants $a_{u,l}^{(i)} \in \mathbb{R}$, $i=1,\ldots,K_{u,l}$.  Set $\mathbf{a}_{u,l}^T\mathbf{P}_{u,l}(\mathbf{x}_u)=0$. Multiply both sides of the equality from the right by $\mathbf{x}_{u,l}^T$, integrate with respect to the measure $f_{\mathbf{X}_u}(\mathbf{x}_u)d\mathbf{x}_u$ over $\mathbb{A}^u$, and apply transposition to obtain
\begin{equation}
\mathbf{S}_{u,l} \mathbf{a}_{u,l} = \mathbf{0},
\label{4.8}
\end{equation}
where $\mathbf{S}_{u,l}$, defined in \eqref{4.5}, is a $K_{u,l} \times K_{u,l}$ invertible matrix. Therefore, \eqref{4.8} yields $\mathbf{a}_{u,l}=\mathbf{0}$, proving linear independence of the elements of $\mathbf{P}_{u,l}(\mathbf{x}_u)$ or $\{P_{u,\mathbf{j}_u}(\mathbf{x}_u): |\mathbf{j}_u|=l, \mathbf{j}_u \in \mathbb{N}_0^{|u|}\}$.  Obviously, the elements of the subset $\{P_{u,\mathbf{j}_u}(\mathbf{x}_u): |\mathbf{j}_u|=l, \mathbf{j}_u \in \mathbb{N}^{|u|}\}$, excluding the elements associated with zero components of $j_{i_1},\ldots,j_{i_{|u|}}$, are also linearly independent. Furthermore, the dimension $M_{u,l}$ of $\mathcal{Z}_{l}^{u}$ matches exactly the number of elements of the aforementioned subset.  Therefore, the spanning set $\{P_{u,\mathbf{j}_u}(\mathbf{x}_u): |\mathbf{j}_u|=l, \mathbf{j}_u \in \mathbb{N}^{|u|}\}$ forms a basis of $\mathcal{Z}_{l}^{u}$.
\end{proof}

\begin{proposition}
\label{p2}
Let $\mathbf{X}:=(X_{1},\ldots,X_{N})^T:(\Omega,\mathcal{F})\to(\mathbb{A}^{N},\mathcal{B}^{N})$, $N\in\mathbb{N}$, be an $N$-dimensional random vector with multivariate probability density function $f_{\mathbf{X}}(\mathbf{x})$, satisfying Items (1)-(3) of Assumption \ref{a1} and $\mathbf{X}_u:=(X_{i_1},\ldots,X_{i_{|u|}})^T:(\Omega^u,\mathcal{F}^u)\to(\mathbb{A}^u,\mathcal{B}^u)$, $\emptyset \ne u \subseteq \{1,\ldots,N\}$, be a subvector of $\mathbf{X}$. Consistent with the probability measure $f_{\mathbf{X}_u}(\mathbf{x}_u)d\mathbf{x}_u$, let $\{P_{u,\mathbf{j}_u}(\mathbf{x}_u): |\mathbf{j}_u|=l, \mathbf{j}_u \in \mathbb{N}^N\}$, the set of multivariate orthogonal polynomials of degree $l$, be a basis of $\mathcal{Z}_{l}^{u}$.  Then the set of polynomials from the union-sum collection
\begin{equation}
\displaystyle \boldsymbol{1} \oplus  \bigcup_{\emptyset \ne u \subseteq \{1,\ldots,N\}} \bigoplus_{l=|u|}^\infty
\text{span}\{ P_{u,\mathbf{j}_u}(\mathbf{x}_u): |\mathbf{j}_u|=l, \mathbf{j}_u \in \mathbb{N}^{|u|} \}
\label{4.9}
\end{equation}
is dense in $L^2(\mathbb{A}^N,\mathcal{B}^{N},f_{\mathbf{X}}d\mathbf{x})$.  Moreover,
\begin{equation}
L^2(\mathbb{A}^N,\mathcal{B}^{N},f_{\mathbf{X}}d\mathbf{x}) = \overline{
\displaystyle \boldsymbol{1} \oplus \bigcup_{\emptyset \ne u \subseteq \{1,\ldots,N\}} \bigoplus_{l=|u|}^\infty \mathcal{Z}_{l}^{u}
},
\label{4.10}
\end{equation}
where the overline denotes set closure.
\end{proposition}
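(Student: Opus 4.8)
The plan is to reduce both assertions to the single classical fact that the polynomial space $\Pi^N$ is dense in $L^2(\mathbb{A}^N,\mathcal{B}^N,f_{\mathbf{X}}d\mathbf{x})$. First I would invoke the dimension-wise splitting \eqref{4.7} together with Proposition \ref{p1}: since $\{P_{u,\mathbf{j}_u}:|\mathbf{j}_u|=l,\,\mathbf{j}_u\in\mathbb{N}^{|u|}\}$ is a basis of each $\mathcal{Z}_l^u$, the union-sum collection in \eqref{4.9} reassembles exactly into $\Pi^N$. Hence the density statement for \eqref{4.9} is literally the statement that $\Pi^N$ is dense, and the identity \eqref{4.10} then follows at once by taking closures on both sides of \eqref{4.7}. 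Everything therefore hinges on proving $\overline{\Pi^N}=L^2(f_{\mathbf{X}}d\mathbf{x})$.

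To show $\Pi^N$ is dense I would prove that its orthogonal complement is trivial. Suppose $g\in L^2(f_{\mathbf{X}}d\mathbf{x})$ satisfies $(g,\mathbf{x}^{\mathbf{j}})_{f_{\mathbf{X}}d\mathbf{x}}=0$ for every $\mathbf{j}\in\mathbb{N}_0^N$; the goal is $g=0$. Put $G:=gf_{\mathbf{X}}$, extended by zero outside $\mathbb{A}^N$. Cauchy--Schwarz together with $\int f_{\mathbf{X}}d\mathbf{x}=1$ gives $G\in L^1(\mathbb{R}^N)$, and by hypothesis all moments $\int_{\mathbb{R}^N}\mathbf{x}^{\mathbf{j}}G(\mathbf{x})d\mathbf{x}$ vanish.

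The decisive step is to upgrade the vanishing of all moments of $G$ to $G\equiv 0$, and this is exactly where Item (3) of Assumption \ref{a1} --- the clause guaranteeing M-determinacy --- is used. Under Item (3a) the support is compact, so the Stone--Weierstrass theorem makes $\Pi^N$ uniformly dense in $C(\mathbb{A}^N)$, which is in turn dense in $L^2$; orthogonality of $g$ to all polynomials then forces $g=0$. Under Item (3b) I would run a Fourier--Laplace argument: for $\mathbf{t}\in\mathbb{C}^N$ with $\|\mathrm{Re}\,\mathbf{t}\|$ small enough, a second Cauchy--Schwarz estimate controlled by $\int\exp(a\|\mathbf{x}\|)f_{\mathbf{X}}d\mathbf{x}<\infty$ shows that $\phi(\mathbf{t}):=\int_{\mathbb{R}^N}e^{\mathbf{t}\cdot\mathbf{x}}G(\mathbf{x})d\mathbf{x}$ converges absolutely and is holomorphic on a tube domain around the origin. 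Its Taylor coefficients at the origin are, up to factorials, the moments of $G$ and hence all vanish, so by the several-variable identity theorem $\phi\equiv 0$ on the (connected) tube. Restricting $\phi$ to purely imaginary arguments yields $\widehat{G}\equiv 0$, and Fourier uniqueness for $L^1$ functions gives $G=0$ a.e.; since $f_{\mathbf{X}}>0$ on its support, $g=0$ a.e.\ with respect to $f_{\mathbf{X}}d\mathbf{x}$, as required.

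The main obstacle is the exponential-integrability case. The delicate points are verifying that $\phi$ is genuinely holomorphic on an $N$-dimensional tube --- not merely separately analytic along coordinate directions --- so that the multivariate identity theorem legitimately propagates the vanishing from a neighborhood of the origin to the whole imaginary subspace, and then correctly matching the boundary values of $\phi$ with the Fourier transform of $G$. The reduction through \eqref{4.7} and the passage to closures for \eqref{4.10} are, by contrast, essentially bookkeeping.
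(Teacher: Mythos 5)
Your proof is correct, and its outer structure coincides with the paper's: both arguments reduce everything to the single fact that $\Pi^N$ is dense in $L^2(\mathbb{A}^N,\mathcal{B}^{N},f_{\mathbf{X}}d\mathbf{x})$, identify the collection \eqref{4.9} with $\Pi^N$ via the splitting \eqref{4.7} (with Proposition \ref{p1} supplying the bases of the subspaces $\mathcal{Z}_l^u$), and obtain \eqref{4.10} by taking closures. Where you genuinely diverge is in the key density fact itself: the paper disposes of it in one line by citing Theorem 3.2.18 of Dunkl and Xu, which is tailored exactly to Items 3(a) and 3(b) of Assumption \ref{a1}, whereas you prove it --- trivial orthogonal complement, Stone--Weierstrass in the compact-support case, and in the exponentially integrable case the Fourier--Laplace argument (holomorphy of $\phi(\mathbf{t})=\int e^{\mathbf{t}\cdot\mathbf{x}}\,g f_{\mathbf{X}}\,d\mathbf{x}$ on a tube containing the imaginary subspace, vanishing Taylor coefficients, the several-variable identity theorem, and $L^1$ Fourier uniqueness). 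This is in substance the standard proof of the theorem the paper cites, so your version buys self-containedness: the role of Items 3(a)/3(b) becomes visible inside the proof rather than being delegated to a reference. Three small remarks. First, the Cauchy--Schwarz step confines you to the tube $\|\mathrm{Re}\,\mathbf{t}\| < a/2$ rather than $a$; harmless, but worth stating. Second, the holomorphy point you flag as delicate is routine: local uniform domination by the integrable majorant $e^{a\|\mathbf{x}\|}f_{\mathbf{X}}$ gives joint holomorphy directly (or invoke Hartogs), and since the entire imaginary subspace lies inside the tube, no boundary-matching issue between $\phi$ and $\widehat{G}$ actually arises. Third, you do not need $f_{\mathbf{X}}>0$ on its support at the end: $g f_{\mathbf{X}}=0$ Lebesgue-a.e.\ already yields $\int g^2 f_{\mathbf{X}}\,d\mathbf{x}=0$, i.e., $g=0$ in $L^2(f_{\mathbf{X}}d\mathbf{x})$.
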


\begin{proof}
Under Items (1) and (2) of Assumption \ref{a1}, orthogonal polynomials $P_{u,\mathbf{j}_u}(\mathbf{x}_u)$ with respect to the probability measure $f_{\mathbf{X}_u}(\mathbf{x}_u)d\mathbf{x}_u$ exist.  According to Theorem 3.2.18 of Dunkl and Xu \cite{dunkl14} and related discussion, which exploits Items 3(a) and 3(b) of Assumption \ref{a1}, the polynomial space $\Pi^N$ is dense in the space $L^2(\mathbb{A}^N,\mathcal{B}^{N},f_{\mathbf{X}}d\mathbf{x})$.  Therefore, the set of polynomials from \eqref{4.9}, which is equal to $\Pi^{N}$ as per \eqref{4.7}, is dense in $L^2(\mathbb{A}^N,\mathcal{B}^{N},f_{\mathbf{X}}d\mathbf{x})$. Including the limit points of \eqref{4.9} yields \eqref{4.10}.
\end{proof}

\subsection{Statistical properties of random multivariate polynomials}
When the input random variables $X_1,\ldots,X_N$, instead of real variables $x_1,\ldots,x_N$, are inserted in the argument,  the multivariate polynomials $P_{u,\mathbf{j}_u}(\mathbf{X}_u)$ and $\Psi_{u,\mathbf{j}_u}(\mathbf{X}_u)$, where $\emptyset \ne u \subseteq \{1,\ldots,N\}$ and $\mathbf{j}_u \in \mathbb{N}^{|u|}$, become functions of random input variables.  Therefore, it is important to establish their second-moment properties, to be exploited in the remaining part of this section and Section 5.

\begin{lemma}
\label{l1}
Let $\mathbf{X}:=(X_{1},\ldots,X_{N})^T:(\Omega,\mathcal{F})\to(\mathbb{A}^{N},\mathcal{B}^{N})$, $N\in\mathbb{N}$, be an $N$-dimensional random vector with multivariate probability density function $f_{\mathbf{X}}(\mathbf{x})$, satisfying Items (1)-(4) of Assumption \ref{a1} and $\mathbf{X}_u:=(X_{i_1},\ldots,X_{i_{|u|}})^T:(\Omega^u,\mathcal{F}^u)\to(\mathbb{A}^u,\mathcal{B}^u)$, $\emptyset \ne u \subseteq \{1,\ldots,N\}$, be a subvector of $\mathbf{X}$. Consistent with the probability measure $f_{\mathbf{X}_u}(\mathbf{x}_u)d\mathbf{x}_u$, let $\{P_{u,\mathbf{j}_u}(\mathbf{x}_u): \mathbf{j}_u \in \mathbb{N}^N\}$ be an infinite set of multivariate orthogonal polynomials.  Then each polynomial of the set satisfies the weak annihilating conditions, that is,
\begin{equation}
\int_{\mathbb{A}^{\{i\}}}P_{u,\mathbf{j}_u}(\mathbf{x}_u) f_{\mathbf{X}_u}(\mathbf{x}_u)dx_{i}=0\;\mathrm{for}\; i\in u\ne\emptyset,\;\mathbf{j}_u \in \mathbb{N}^{|u|}.
\label{4.11}
\end{equation}
\end{lemma}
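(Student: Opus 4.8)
The plan is to recast the weak annihilating condition \eqref{4.11} as an orthogonality statement and then exploit the orthogonality of $P_{u,\mathbf{j}_u}$ against lower-dimensional polynomials. Fix a coordinate $i\in u$ and write $w:=u\setminus\{i\}$, so that $\mathbf{x}_u=(x_i,\mathbf{x}_w)$ and $f_{\mathbf{X}_u}(\mathbf{x}_u)/f_{\mathbf{X}_w}(\mathbf{x}_w)$ is the conditional density of $X_i$ given $\mathbf{X}_w=\mathbf{x}_w$. The left-hand side of \eqref{4.11} then equals $f_{\mathbf{X}_w}(\mathbf{x}_w)\,\mathbb{E}[P_{u,\mathbf{j}_u}(\mathbf{X}_u)\mid\mathbf{X}_w=\mathbf{x}_w]$, so it suffices to show that this conditional expectation vanishes $f_{\mathbf{X}_w}$-almost everywhere; equivalently, that $P_{u,\mathbf{j}_u}$ is orthogonal in $L^2(f_{\mathbf{X}_u}d\mathbf{x}_u)$ to every function depending on $\mathbf{x}_w$ alone.

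The second step is to reduce this orthogonality to a purely polynomial statement. Since $\mathbf{X}_w$ inherits Items $(1)$–$(3)$ of Assumption \ref{a1} from $\mathbf{X}$, as recorded at the start of Section 4, the density argument invoked in the proof of Proposition \ref{p2} (Theorem 3.2.18 of Dunkl and Xu) shows that $\Pi^w$ is dense in $L^2(f_{\mathbf{X}_w}d\mathbf{x}_w)$. Hence it is enough to verify $\mathbb{E}[P_{u,\mathbf{j}_u}(\mathbf{X}_u)\,\psi(\mathbf{X}_w)]=(P_{u,\mathbf{j}_u},\psi)_{f_{\mathbf{X}_u}d\mathbf{x}_u}=0$ for every $\psi\in\Pi^w$: writing $h(\mathbf{x}_w)$ for the left side of \eqref{4.11} and $g:=h/f_{\mathbf{X}_w}=\mathbb{E}[P_{u,\mathbf{j}_u}\mid\mathbf{X}_w]\in L^2(f_{\mathbf{X}_w}d\mathbf{x}_w)$, a Fubini interchange turns this family of identities into $(\psi,g)_{f_{\mathbf{X}_w}d\mathbf{x}_w}=0$ for all polynomials $\psi$, and density then forces $g\equiv0$, hence $h\equiv0$.

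The final step is to establish $P_{u,\mathbf{j}_u}\perp\Pi^w$. For polynomials $\psi$ with $\deg\psi<|\mathbf{j}_u|$ this is immediate from the defining relations \eqref{4.2}–\eqref{4.3}, since $\psi\in\Pi^w\subset\Pi^u$ has strictly smaller total degree. For the remaining polynomials I would invoke the dimension-wise splitting \eqref{4.6}: every $\psi\in\Pi^w$ lies in $\boldsymbol{1}\oplus\bigcup_{\emptyset\ne v\subseteq w}\bigoplus_r\mathcal{Z}_r^v$, and because $i\in u\setminus w$ each index set occurring here satisfies $v\subseteq w\subsetneq u$. As $P_{u,\mathbf{j}_u}\in\mathcal{Z}_{|\mathbf{j}_u|}^u$, orthogonality of $P_{u,\mathbf{j}_u}$ to $\boldsymbol{1}$ and to each $\mathcal{Z}_r^v$ with $v\subsetneq u$ yields, upon summation, $P_{u,\mathbf{j}_u}\perp\Pi^w$, completing the chain of reductions; since $i\in u$ was arbitrary, \eqref{4.11} then holds in every coordinate direction.

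I expect the main obstacle to be exactly this cross-subset orthogonality at arbitrarily high degree. The elementary orthogonal-polynomial relation \eqref{4.3} supplies orthogonality only between polynomials of different \emph{total degree}, whereas here $\psi\in\Pi^w$ may have degree far exceeding $|\mathbf{j}_u|$; what is genuinely needed is the structural orthogonality $\mathcal{Z}_{|\mathbf{j}_u|}^u\perp\mathcal{Z}_r^v$ for every proper subset $v\subsetneq u$ and every $r$. This is the hierarchical orthogonality built into the decompositions \eqref{4.6}–\eqref{4.7} and is the polynomial counterpart of the generalized-ADD orthogonality \eqref{3.4}; isolating and justifying it, as opposed to the coarser degree-graded orthogonality, is the crux of the argument, and once it is available the preceding reductions close the proof.
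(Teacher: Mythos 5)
Your first two reductions are sound and are a genuinely different framing from the paper's: rewriting the left side of \eqref{4.11} as $f_{\mathbf{X}_w}(\mathbf{x}_w)\,\mathbb{E}\left[P_{u,\mathbf{j}_u}(\mathbf{X}_u)\mid \mathbf{X}_w=\mathbf{x}_w\right]$ with $w=u\setminus\{i\}$, and then using density of $\Pi^w$ in $L^2(f_{\mathbf{X}_w}d\mathbf{x}_w)$ (the marginal inherits Items (1)--(3)) to reduce the lemma to the single statement $P_{u,\mathbf{j}_u}\perp\Pi^w$. The gap is exactly where you flag it, and it is fatal as the argument stands: the cross-subset orthogonality $\mathcal{Z}_{|\mathbf{j}_u|}^u\perp\mathcal{Z}_r^v$ for $v\subsetneq u$ and $r\ge|\mathbf{j}_u|$ is \emph{not} built into the decompositions \eqref{4.6}--\eqref{4.7}. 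Read those displays carefully: the orthogonal sum $\oplus$ appears only between degree blocks $\mathcal{Z}_l^v$ of a \emph{fixed} subset $v$ (justified by \eqref{4.3}) and with the constant subspace $\boldsymbol{1}$; across distinct subsets the paper deliberately writes $\bigcup$, a plain union, precisely because no orthogonality between blocks attached to different variable sets has been established at that stage --- and for dependent measures none holds in general (the ``otherwise'' entry of \eqref{4.14} is a generically nonzero integral). The nested-subset orthogonality you invoke is true, but in the paper it is the first zero case of \eqref{4.14} in Proposition \ref{p3}, whose proof \emph{uses} Lemma \ref{l1}. So your step 3 assumes a statement that sits logically downstream of the lemma being proved; your closing paragraph concedes as much by calling its justification ``the crux'' without supplying one. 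The proposal is therefore circular, or at best incomplete.

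A second, telling symptom: your argument never uses Item (4) of Assumption \ref{a1} (grid-closed support), which is a hypothesis of the lemma. The paper's proof hinges on it. There, one takes an \emph{arbitrary} $y\in L^2(\mathbb{A}^N,\mathcal{B}^N,f_{\mathbf{X}}d\mathbf{x})$, invokes Hooker's result \cite{hooker07} (valid under Items (3)--(4)) to obtain the generalized ADD whose components $y_u\in\mathcal{W}_u$ satisfy the weak annihilating conditions \eqref{3.2}, expands $y_u$ in the system $\{P_{u,\mathbf{j}_u}\}$ as in \eqref{4.12}, interchanges sum and integral, and concludes from the arbitrariness of the resulting coefficients that each integral in \eqref{4.11} must vanish. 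In other words, the weak annihilating property of the orthogonal polynomials is inherited from an analytic fact about the measure --- existence and structure of the generalized ADD under grid closure --- not derived from the orthogonality algebra of the polynomial families, and a purely algebraic route like your step 3 cannot close without importing such an input. To repair your proof you would either have to prove the nested cross-subset orthogonality directly (which essentially amounts to re-proving the lemma) or route through the generalized ADD as the paper does; with that ingredient in hand, your conditional-expectation reformulation would then be a clean alternative packaging of the same idea.
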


\begin{proof}
Let $y(\mathbf{x}) \in L^2(\mathbb{A}^N,\mathcal{B}^{N},f_{\mathbf{X}}d\mathbf{x})$ be an arbitrary function, where the input probability measure satisfies Items (3) and (4) of Assumption \ref{a1}.  Therefore, as explained in Section 2, there exists a unique generalized ADD of $y(\mathbf{x})$ where the component functions $y_{u}(\mathbf{x}_u)$, $\emptyset \ne u \subseteq \{1,\ldots,N\}$, obey the weak annihilating conditions described in \eqref{3.2}.  Since $y$ is a square-integrable function, so are the component functions of $y$, that is, $y_u(\mathbf{x}_u)$ is an element of $\mathcal{W}_u$ defined in \eqref{3.7}.  Furthermore, from similar considerations given to Propositions \ref{p1} and \ref{p2}, one can prove that the set $\{P_{u,\mathbf{j}_u}(\mathbf{x}_u): \mathbf{j}_u \in \mathbb{N}^N\}$ is a basis of $\mathcal{W}_u$.  Consequently, there exists a Fourier-like series expansion of
\begin{equation}
y_u(\mathbf{x}_u) \sim
\displaystyle
\sum_{\mathbf{j}_u \in \mathbb{N}^{|u|}} \hat{C}_{u,\mathbf{j}_u} P_{u,\mathbf{j}_u}(\mathbf{x}_u)
\label{4.12}
\end{equation}
with $\hat{C}_{u,\mathbf{j}_u}$ denoting the associated expansion coefficients. The coefficients depend on $y_u(\mathbf{x}_u)$, which, in turn, depends on $y$.  Here, the symbol $\sim$ represents equality in a weaker sense, such as equality in mean-square, but not necessarily pointwise nor almost everywhere.  From the standard Hilbert space theory, the infinite series on the right hand side of \eqref{4.12} converges in mean-square.  Combining \eqref{3.2} and \eqref{4.12} and interchanging the integral and summation operators, which is admissible for the convergent sum, yields
\begin{equation}
\displaystyle
\sum_{\mathbf{j}_u \in \mathbb{N}^{|u|}} \hat{C}_{u,\mathbf{j}_u}
\int_{\mathbb{A}^{\{i\}}} P_{u,\mathbf{j}_u}(\mathbf{x}_u) f_{\mathbf{X}_u}(\mathbf{x}_u)dx_{i}=0
\label{4.12b}
\end{equation}
for $i\in u\neq\emptyset$ and $\mathbf{j}_u \in \mathbb{N}^{|u|}$.  Since $y(\mathbf{x}) \in L^2(\mathbb{A}^N,\mathcal{B}^{N},f_{\mathbf{X}}d\mathbf{x})$ is arbitrary, so are the component functions $y_u(\mathbf{x}_u) \in \mathcal{W}_u$ and the resultant coefficients $\hat{C}_{u,\mathbf{j}_u}$, yet the sum in \eqref{4.12b} must vanish.  This is only possible if the integral in \eqref{4.12b} vanishes, resulting in \eqref{4.11}.
\end{proof}

\begin{proposition}
Let $\mathbf{X}:=(X_{1},\ldots,X_{N})^T:(\Omega,\mathcal{F})\to(\mathbb{A}^{N},\mathcal{B}^{N})$, $N\in\mathbb{N}$, be an $N$-dimensional random vector with multivariate probability density function $f_{\mathbf{X}}(\mathbf{x})$, satisfying Items (1)-(4) of Assumption \ref{a1}.
For $\emptyset \ne u,v \subseteq \{1,\ldots,N\}$, $\mathbf{j}_u \in \mathbb{N}^{|u|}$, and $\mathbf{k}_v \in \mathbb{N}^{|v|}$, the first- and second-order moments of multivariate orthogonal polynomials, respectively, are
\begin{equation}
\mathbb{E}\left[ P_{u,\mathbf{j}_u}(\mathbf{X}_u) \right] = 0
\label{4.13}
\end{equation}
and
\begin{equation}
\mathbb{E}\left[ P_{u,\mathbf{j}_u}(\mathbf{X}_u) P_{v,\mathbf{k}_v}(\mathbf{X}_v) \right] =
\begin{cases}
0, & u \subset v \subset u,\forall \; \mathbf{j}_u, \mathbf{k}_v , \\
0, & \forall \;u,v,|\mathbf{j}_u|\neq |\mathbf{k}_v|, \\
\displaystyle \int_{\mathbb{A}^{u}}\!\!\!P_{u,\mathbf{j}_u}^2(\mathbf{x}_u)f_{\mathbf{X}_u}(\mathbf{x}_u)d\mathbf{x}_u, & u=v,\mathbf{j}_u=\mathbf{k}_v, \\
\displaystyle \int_{\mathbb{A}^{u\cup v}}\!\!\!\!\!\!P_{u,\mathbf{j}_u}(\mathbf{x}_u) P_{v,\mathbf{k}_v}(\mathbf{x}_v) f_{\mathbf{X}_{u\cup v}}(\mathbf{x}_{u\cup v})d\mathbf{x}_{u\cup v}, & \text{otherwise}.
\end{cases}
\label{4.14}
\end{equation}
\label{p3}
\end{proposition}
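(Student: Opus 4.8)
The plan is to dispatch the first-moment identity \eqref{4.13} and the four branches of \eqref{4.14} separately, relying on three ingredients already available: the defining orthogonality \eqref{4.2} of $P_{u,\mathbf{j}_u}$ to all lower-degree polynomials, the degree-orthogonality \eqref{4.3} within a fixed index set, and the weak annihilating conditions of Lemma \ref{l1}. For \eqref{4.13}, I would note that $\deg P_{u,\mathbf{j}_u}=|\mathbf{j}_u|\ge|u|\ge 1$, so the constant $1$ is a strictly lower-degree polynomial in $\Pi^u$; hence $\mathbb{E}[P_{u,\mathbf{j}_u}(\mathbf{X}_u)]=(P_{u,\mathbf{j}_u},1)_{f_{\mathbf{X}_u}d\mathbf{x}_u}=0$ by \eqref{4.2}. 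Two of the second-moment branches are then immediate: when $u=v$ and $|\mathbf{j}_u|\ne|\mathbf{k}_v|$ the expectation is the inner product $(P_{u,\mathbf{j}_u},P_{u,\mathbf{k}_u})_{f_{\mathbf{X}_u}d\mathbf{x}_u}$, which vanishes by \eqref{4.3}; and when $u=v$ with $\mathbf{j}_u=\mathbf{k}_v$ the integrand is $P_{u,\mathbf{j}_u}^2$, so the expectation is $\|P_{u,\mathbf{j}_u}\|_{f_{\mathbf{X}_u}d\mathbf{x}_u}^2$, the third line of \eqref{4.14}, by definition.

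The crux is the proper-containment branch. Suppose $v\subsetneq u$ (the reverse containment is symmetric). Because the product $P_{u,\mathbf{j}_u}(\mathbf{x}_u)P_{v,\mathbf{k}_v}(\mathbf{x}_v)$ depends only on $\mathbf{x}_u$, marginalizing $f_{\mathbf{X}}$ over the complement $-u$ gives
\[
\mathbb{E}\left[ P_{u,\mathbf{j}_u}(\mathbf{X}_u) P_{v,\mathbf{k}_v}(\mathbf{X}_v) \right]
= \int_{\mathbb{A}^{u}} P_{u,\mathbf{j}_u}(\mathbf{x}_u)\,P_{v,\mathbf{k}_v}(\mathbf{x}_v)\, f_{\mathbf{X}_u}(\mathbf{x}_u)\, d\mathbf{x}_u .
\]
Since the containment is proper, I may fix an index $i\in u\setminus v$; the factor $P_{v,\mathbf{k}_v}(\mathbf{x}_v)$ is then free of $x_i$, and splitting $d\mathbf{x}_u=dx_i\,d\mathbf{x}_{u\setminus\{i\}}$ recasts the right-hand side as
\[
\int_{\mathbb{A}^{u\setminus\{i\}}} P_{v,\mathbf{k}_v}(\mathbf{x}_v)
\left[ \int_{\mathbb{A}^{\{i\}}} P_{u,\mathbf{j}_u}(\mathbf{x}_u)\, f_{\mathbf{X}_u}(\mathbf{x}_u)\, dx_i \right] d\mathbf{x}_{u\setminus\{i\}} .
\]
The inner bracket is precisely the weak annihilating integral \eqref{4.11} and vanishes identically, so the whole expectation is zero, uniformly in the degrees $|\mathbf{j}_u|$ and $|\mathbf{k}_v|$. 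I expect the main obstacle to lie here, in justifying the manipulation rather than the idea: $\mathbb{A}^u$ need not be a product set, so the inner integration runs over the section of $\mathbb{A}^u$ at fixed $\mathbf{x}_{u\setminus\{i\}}$, and I would lean on Item (4) of Assumption \ref{a1} (grid-closed support) to guarantee that \eqref{4.11} holds on each such section, and on Item (2) (finite absolute moments) to license the Fubini interchange.

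It remains to read off the fourth, ``otherwise'' branch, covering partial overlap, disjointness, and the case $u=v$ with $|\mathbf{j}_u|=|\mathbf{k}_v|$ but $\mathbf{j}_u\ne\mathbf{k}_v$. Here no cancellation mechanism applies, and the formula is obtained directly: the integrand depends only on the coordinates in $u\cup v$, so integrating $f_{\mathbf{X}}$ over the complement of $u\cup v$ replaces it by the marginal $f_{\mathbf{X}_{u\cup v}}$, giving the last line of \eqref{4.14}. Finally I would check that the four branches are mutually exclusive and jointly exhaust every quadruple $(u,v,\mathbf{j}_u,\mathbf{k}_v)$, which completes the case analysis.
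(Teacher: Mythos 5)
Your treatment of \eqref{4.13}, of the containment branch, and of the two diagonal cases $u=v$ is sound and matches the paper's own route: the paper also proves the zero mean via orthogonality to constants (through Definition \ref{d1} with $r=0$, equivalent to your use of \eqref{4.2}), and it kills the containment case exactly as you do, by marginalizing to $\mathbb{A}^u$, isolating a coordinate $i\in u\setminus v$, and invoking the weak annihilating conditions \eqref{4.11} of Lemma \ref{l1}.

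The genuine gap is the second zero branch of \eqref{4.14}. That branch asserts $\mathbb{E}[P_{u,\mathbf{j}_u}(\mathbf{X}_u)P_{v,\mathbf{k}_v}(\mathbf{X}_v)]=0$ whenever $|\mathbf{j}_u|\ne|\mathbf{k}_v|$ for \emph{all} pairs $u,v$ --- including partially overlapping and disjoint subsets --- but you prove it only for $u=v$, where it is indeed an instance of \eqref{4.3}. You then consign ``partial overlap'' and ``disjointness'' to the ``otherwise'' branch, where you merely rewrite the expectation as an integral against the marginal $f_{\mathbf{X}_{u\cup v}}$; that identity does not show the integral vanishes when the total degrees differ, so the case $u\ne v$, $u\not\subset v$, $v\not\subset u$, $|\mathbf{j}_u|\ne|\mathbf{k}_v|$ is left unproved. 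Your inner-product argument does not transfer to it: for $u\ne v$ the two polynomials live in different variable sets, and \eqref{4.3} for a single index set says nothing about their cross-moment under the joint, generally non-product, measure. The paper closes this case with a separate device: since $P_{u,\mathbf{j}_u}\in\mathcal{Z}_{|\mathbf{j}_u|}^{u}$ and $P_{v,\mathbf{k}_v}\in\mathcal{Z}_{|\mathbf{k}_v|}^{v}$, each is expanded in the full $N$-variate orthogonal system, $P_{u,\mathbf{j}_u}(\mathbf{x}_u)=\sum_{|\mathbf{j}|=|\mathbf{j}_u|}C_{u,\mathbf{j}}P_{\mathbf{j}}(\mathbf{x})$ and $P_{v,\mathbf{k}_v}(\mathbf{x}_v)=\sum_{|\mathbf{k}|=|\mathbf{k}_v|}C_{v,\mathbf{k}}P_{\mathbf{k}}(\mathbf{x})$, after which every cross term $\int_{\mathbb{A}^{N}}P_{\mathbf{j}}(\mathbf{x})P_{\mathbf{k}}(\mathbf{x})f_{\mathbf{X}}(\mathbf{x})d\mathbf{x}$ vanishes by \eqref{4.3} applied with $u=\{1,\ldots,N\}$, because $|\mathbf{j}|=|\mathbf{j}_u|\ne|\mathbf{k}_v|=|\mathbf{k}|$. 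Without this (or an equivalent) argument your case analysis does not cover the statement; relatedly, your closing plan to verify that the four branches are ``mutually exclusive'' cannot succeed as stated, since the first two branches overlap, and the ``otherwise'' branch must be read as additionally requiring $|\mathbf{j}_u|=|\mathbf{k}_v|$.
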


\begin{proof}
In reference to Definition \ref{d1}, set $r=0$ and $1 \le l < \infty$ for any $\emptyset \ne u \subseteq \{1,\ldots,N\}$.  Then $\mathbf{x}_{u,0}=(1)^T=(1)$, so that \eqref{4.4} becomes
\[
{\left(1,\mathbf{P}_{u,l}^T(\mathbf{x}_u) \right)}_{f_{\mathbf{X}_u}d\mathbf{x}_u}:=
\int_{\mathbb{A}^u}\mathbf{P}_{u,l}^T(\mathbf{x}_u)f_{\mathbf{X}_u}(\mathbf{x}_u)d\mathbf{x}_u=
\boldsymbol{0}
\]
for all $1 \le l < \infty$.  Since the vector $\mathbf{P}_{u,l}(\mathbf{X}_u)$ comprises as elements the orthogonal polynomials $P_{u,\mathbf{j}_u}(\mathbf{X}_u)$, $|\mathbf{j}_u|=l$, one obtains \eqref{4.13} for any $\emptyset \ne u \subseteq \{1,\ldots,N\}$ and $\mathbf{j}_u \in \mathbb{N}^{|u|}$.

For the first \emph{zero} result of \eqref{4.14}, consider two subsets $\emptyset\ne u,v\subseteq\{1,\cdots,N\}$, where $v\subset u$, $\mathbf{j}_u \in \mathbb{N}^{|u|}$, and $\mathbf{k}_v \in \mathbb{N}^{|v|}$.  Obviously, $u=v\cup(u\setminus v)$. Let $i\in(u\setminus v)\subseteq u$. Then
\begin{align}
\mathbb{E}\left[P_{u,\mathbf{j}_u}(\mathbf{X}_{u})P_{v,\mathbf{k}_v}(\mathbf{X}_{v})\right] & :=\int_{\mathbb{A}^{N}}P_{u,\mathbf{j}_u}(\mathbf{x}_{u})P_{v,\mathbf{k}_v}(\mathbf{x}_{v})f_{\mathbf{X}}(\mathbf{x})d\mathbf{x}\\
 & =\int_{\mathbb{A}^{u}}P_{u,\mathbf{j}_u}(\mathbf{x}_{u})P_{v,\mathbf{k}_v}(\mathbf{x}_{v})f_{\mathbf{X}_{u}}(\mathbf{x}_{u})d\mathbf{x}_{u}\\
 & =\int_{\mathbb{A}^{v}}P_{v,\mathbf{k}_v}(\mathbf{x}_{v})\int_{\mathbb{A}^{u\setminus v}}P_{u,\mathbf{j}_u}(\mathbf{x}_{u})f_{\mathbf{X}_{u}}(\mathbf{x}_{u})d\mathbf{x}_{u\setminus v}d\mathbf{x}_{v}\\
 & =\int_{\mathbb{A}^{v}}P_{v,\mathbf{k}_v}(\mathbf{x}_{v})\int_{\mathbb{A}^{(u\setminus v)\setminus {\{i\}}}}\int_{\mathbb{A}^{\{i\}}}P_{u,\mathbf{j}_u}(\mathbf{x}_{u})f_{\mathbf{X}_{u}}(\mathbf{x}_{u})dx_{i}\prod_{{\textstyle {j\in(u\setminus v)\atop {j\neq i}}}}dx_{j}d\mathbf{x}_{v}\\
 & =0,
\end{align}
where the equality to \emph{zero} in the last line results from the innermost integral vanishing as per \eqref{4.11} in Lemma \ref{l1}.  Interchanging $u$ and $v$ obtains the complete result.

To derive the second \emph{zero} result of \eqref{4.14}, recognize that the polynomials $P_{u,\mathbf{j}_u}(\mathbf{x}_{u})$ and $P_{v,\mathbf{k}_v}(\mathbf{x}_{v})$ are members of $\mathcal{Z}_{|\mathbf{j}_u|}^u$ and $\mathcal{Z}_{|\mathbf{k}_v|}^v$, respectively.  Therefore, they can both be expanded in terms of orthogonal polynomials $P_{\mathbf{j}}(\mathbf{x})$ in $\mathbf{x}$, for instance,
\[
P_{u,\mathbf{j}_u}(\mathbf{x}_{u}) =
\displaystyle \sum_{|\mathbf{j}|=|\mathbf{j}_u|} C_{u,\mathbf{j}} P_{\mathbf{j}}(\mathbf{x}),~
P_{v,\mathbf{k}_v}(\mathbf{x}_{v}) =
\displaystyle \sum_{|\mathbf{k}|=|\mathbf{k}_v|} C_{v,\mathbf{k}} P_{\mathbf{k}}(\mathbf{x}),
\]
with $C_{u,\mathbf{j}}$ and $C_{v,\mathbf{k}}$ denoting the associated expansion coefficients.  Then, for any $u$, $v$, and $|\mathbf{j}_u| \ne |\mathbf{k}_v|$,
\begin{align}
\mathbb{E}\left[P_{u,\mathbf{j}_u}(\mathbf{X}_{u})P_{v,\mathbf{k}_v}(\mathbf{X}_{v})\right] & :=\int_{\mathbb{A}^{N}}P_{u,\mathbf{j}_u}(\mathbf{x}_{u})P_{v,\mathbf{k}_v}(\mathbf{x}_{v})f_{\mathbf{X}}(\mathbf{x})d\mathbf{x}\\
  & =  \int_{\mathbb{A}^{N}}
 \displaystyle \sum_{|\mathbf{j}|=|\mathbf{j}_u|} \displaystyle \sum_{|\mathbf{k}|=|\mathbf{k}_v|}
 C_{u,\mathbf{j}} C_{v,\mathbf{k}}
 P_{\mathbf{j}}(\mathbf{x})P_{\mathbf{k}}(\mathbf{x})f_{\mathbf{X}}(\mathbf{x})d\mathbf{x}\\
 & =\displaystyle \sum_{|\mathbf{j}|=|\mathbf{j}_u|} \displaystyle \sum_{|\mathbf{k}|=|\mathbf{k}_v|}
 C_{u,\mathbf{j}} C_{v,\mathbf{k}}
 \int_{\mathbb{A}^{N}}P_{\mathbf{j}}(\mathbf{x})P_{\mathbf{k}}(\mathbf{x})f_{\mathbf{X}}(\mathbf{x})d\mathbf{x}\\
  & =0,
\end{align}
where the equality to \emph{zero} in the last line stems from the vanishing integral according to \eqref{4.3} with $u=\{1,\ldots,N\}$.

Finally, the non-zero expressions of \eqref{4.14} come from their definitions. No further reduction is possible for a general probability measure.
\end{proof}

\begin{corollary}
For $\emptyset \ne u,v \subseteq \{1,\ldots,N\}$, $\mathbf{j}_u \in \mathbb{N}^{|u|}$, and $\mathbf{k}_v \in \mathbb{N}^{|v|}$, the first- and second-order moments of standardized multivariate orthogonal polynomials, respectively, are
\begin{equation}
\mathbb{E}\left[ \Psi_{u,\mathbf{j}_u}(\mathbf{X}_u) \right] = 0
\end{equation}
\vspace{-0.15in}
and
\begin{equation}
\mathbb{E}\left[ \Psi_{u,\mathbf{j}_u}(\mathbf{X}_u) \Psi_{v,\mathbf{k}_v}(\mathbf{X}_v) \right] =
\begin{cases}
0, & u \subset v \subset u,\forall \; \mathbf{j}_u, \mathbf{k}_v, \\
0, & \forall \; u,v,|\mathbf{j}_u|\neq |\mathbf{k}_v|, \\
1, & u=v,\mathbf{j}_u=\mathbf{k}_v, \\
\displaystyle \int_{\mathbb{A}^{u\cup v}}\!\!\!\!\!\!\Psi_{u,\mathbf{j}_u}(\mathbf{x}_u) \Psi_{v,\mathbf{k}_v}(\mathbf{x}_v) f_{\mathbf{X}_{u\cup v}}(\mathbf{x}_{u\cup v})d\mathbf{x}_{u\cup v}, & \text{otherwise}.
\end{cases}
\end{equation}
\label{c1}
\end{corollary}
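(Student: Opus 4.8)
The plan is to derive Corollary \ref{c1} directly from Proposition \ref{p3} by exploiting the fact that, per Definition \ref{d2}, each standardized polynomial is simply the corresponding orthogonal polynomial divided by a strictly positive constant, namely $\Psi_{u,\mathbf{j}_u}=P_{u,\mathbf{j}_u}/\|P_{u,\mathbf{j}_u}\|_{f_{\mathbf{X}_u}d\mathbf{x}_u}$. Since the inner product $(\cdot,\cdot)_{f_{\mathbf{X}_u}d\mathbf{x}_u}$ is positive-definite and each $P_{u,\mathbf{j}_u}$ is nonzero by construction, every norm appearing as a denominator is strictly positive, so all the divisions below are legitimate. No new integration or orthogonality argument is needed; the corollary is a pure rescaling of the moment formulas already established.

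First I would handle the first-order moment. Substituting the definition and pulling the positive constant $\|P_{u,\mathbf{j}_u}\|_{f_{\mathbf{X}_u}d\mathbf{x}_u}$ out of the expectation by linearity gives $\mathbb{E}[\Psi_{u,\mathbf{j}_u}(\mathbf{X}_u)] = \mathbb{E}[P_{u,\mathbf{j}_u}(\mathbf{X}_u)]/\|P_{u,\mathbf{j}_u}\|_{f_{\mathbf{X}_u}d\mathbf{x}_u}$, which vanishes immediately by \eqref{4.13}. Next, for the second-order moment, I would factor out both normalizing constants to write $\mathbb{E}[\Psi_{u,\mathbf{j}_u}(\mathbf{X}_u)\Psi_{v,\mathbf{k}_v}(\mathbf{X}_v)] = \mathbb{E}[P_{u,\mathbf{j}_u}(\mathbf{X}_u)P_{v,\mathbf{k}_v}(\mathbf{X}_v)]/(\|P_{u,\mathbf{j}_u}\|_{f_{\mathbf{X}_u}d\mathbf{x}_u}\|P_{v,\mathbf{k}_v}\|_{f_{\mathbf{X}_v}d\mathbf{x}_v})$ and then invoke \eqref{4.14} case by case. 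In the two zero cases ($v \subset u$ and $|\mathbf{j}_u| \ne |\mathbf{k}_v|$) the numerator is zero, so the quotient is zero. In the ``otherwise'' case the numerator is the integral of $P_{u,\mathbf{j}_u}P_{v,\mathbf{k}_v}$ against $f_{\mathbf{X}_{u\cup v}}$, and distributing each normalizing constant into the integrand reproduces the stated integral of $\Psi_{u,\mathbf{j}_u}\Psi_{v,\mathbf{k}_v}$.

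The only step deserving genuine attention---and the place where a careless reader might stumble---is the diagonal case $u=v$, $\mathbf{j}_u = \mathbf{k}_v$. Here the numerator from \eqref{4.14} equals $\int_{\mathbb{A}^u} P_{u,\mathbf{j}_u}^2 f_{\mathbf{X}_u}\,d\mathbf{x}_u = \|P_{u,\mathbf{j}_u}\|_{f_{\mathbf{X}_u}d\mathbf{x}_u}^2$, while the denominator is $\|P_{u,\mathbf{j}_u}\|_{f_{\mathbf{X}_u}d\mathbf{x}_u}\cdot\|P_{u,\mathbf{j}_u}\|_{f_{\mathbf{X}_u}d\mathbf{x}_u}$, so the quotient is exactly $1$, matching the normalization built into Definition \ref{d2}. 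This confirms the stated value and completes the case analysis. There is no real analytic obstacle: the entire content is that standardization by the $L^2$ norm turns the diagonal second moment into unity while leaving all the structural zeros and the ``otherwise'' integral form intact.
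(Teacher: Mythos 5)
Your proposal is correct and is exactly the argument the paper leaves implicit: Corollary \ref{c1} is stated without proof precisely because it follows from Proposition \ref{p3} by the rescaling in Definition \ref{d2}, with the strictly positive norms (guaranteed by positive-definiteness of the inner product and $P_{u,\mathbf{j}_u}\ne 0$) factoring out of the expectations and the diagonal case collapsing to $\|P_{u,\mathbf{j}_u}\|^2/\|P_{u,\mathbf{j}_u}\|^2=1$. Your case-by-case verification, including noting that the first zero case covers both $u\subset v$ and $v\subset u$, matches the intended reasoning.
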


\begin{corollary}
Let $\mathbf{X}=(X_1,\ldots,X_N)^T$ be a vector of independent, but not necessarily identical, input random variables, which satisfy Items (1) and (2) Assumption \ref{a1}.  Denote by $f_{X_i}(x_i)$, $i=1,\ldots,N$, the marginal density function of the $i$th random variable $X_i$ and by $\Psi_{\{i\},j_i}(x_i)$ the $j_i$th-degree univariate orthonormal polynomial in $x_i$, which is obtained consistent with the probability measure $f_{X_i}(x_i)dx_i$. Then (1) for $\emptyset \ne u \subseteq \{1,\ldots,N\}$, $\mathbf{j}_u \in \mathbb{N}^{|u|}$,
\[
\Psi_{u,\mathbf{j}_u}(\mathbf{x}_u)=
\prod_{i \in u} \Psi_{\{i\},j_i}(x_i) =
\prod_{p=1}^{|u|} \Psi_{\{i_p\},j_{i_p}}(x_{i_p})
\]
is a multivariate orthonormal polynomial in $\mathbf{x}_u=(x_{i_1},\ldots,x_{i_{|u|}})$ of degree $|\mathbf{j}_u|=j_{i_1}+\cdots+j_{i_{|u|}}$; and (2) for $\emptyset \ne u,v \subseteq \{1,\ldots,N\}$, $\mathbf{j}_u \in \mathbb{N}^{|u|}$, and $\mathbf{k}_v \in \mathbb{N}^{|v|}$, the first- and second-order moments of multivariate orthonormal polynomials, respectively, are
\begin{equation}
\mathbb{E}\left[ \Psi_{u,\mathbf{j}_u}(\mathbf{X}_u) \right] = 0
\end{equation}
and
\begin{equation}
\mathbb{E}\left[ \Psi_{u,\mathbf{j}_u}(\mathbf{X}_u) \Psi_{v,\mathbf{k}_v}(\mathbf{X}_v) \right] =
\begin{cases}
1, & u=v,~\mathbf{j}_u=\mathbf{k}_v, \\
0, & \text{otherwise}.
\end{cases}
\label{4.26}
\end{equation}
\label{c2}
\end{corollary}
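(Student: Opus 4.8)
The plan is to exploit the product structure of the joint density forced by independence, $f_{\mathbf{X}_u}(\mathbf{x}_u)=\prod_{i\in u}f_{X_i}(x_i)$, so that every multivariate integral collapses into a product of univariate ones via Fubini. I would prove Part (1) by \emph{defining} the candidate $\tilde{\Psi}_{u,\mathbf{j}_u}(\mathbf{x}_u):=\prod_{i\in u}\Psi_{\{i\},j_i}(x_i)$ and verifying it is a legitimate standardized multivariate orthogonal polynomial of degree $|\mathbf{j}_u|$ consistent with $f_{\mathbf{X}_u}d\mathbf{x}_u$. Its total degree equals $\sum_{i\in u}j_i=|\mathbf{j}_u|$ since each univariate factor $\Psi_{\{i\},j_i}$ has degree $j_i$. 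To confirm the defining orthogonality \eqref{4.2}, I would test it against an arbitrary monomial $\mathbf{x}_u^{\mathbf{m}_u}$ with $|\mathbf{m}_u|<|\mathbf{j}_u|$: by independence the expectation factorizes as $\prod_{i\in u}\mathbb{E}[\Psi_{\{i\},j_i}(X_i)X_i^{m_i}]$, and a strict drop in total degree forces $m_{i^*}<j_{i^*}$ for at least one index $i^*$, so that factor vanishes by one-dimensional orthogonality and annihilates the product. Testing instead against another product of equal total degree yields $\prod_{i\in u}\mathbb{E}[\Psi_{\{i\},j_i}(X_i)\Psi_{\{i\},k_i}(X_i)]=\prod_{i\in u}\delta_{j_i k_i}$ by univariate orthonormality, which simultaneously shows unit norm and orthogonality to every other same-degree product. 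These facts establish $\tilde{\Psi}_{u,\mathbf{j}_u}$ as a valid choice of $\Psi_{u,\mathbf{j}_u}$, completing Part (1).

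For Part (2) I would read both moment identities directly off the factorization. The first moment is $\mathbb{E}[\Psi_{u,\mathbf{j}_u}(\mathbf{X}_u)]=\prod_{i\in u}\mathbb{E}[\Psi_{\{i\},j_i}(X_i)]=0$, since each $j_i\ge 1$ (as $\mathbf{j}_u\in\mathbb{N}^{|u|}$) makes every univariate factor orthogonal to the constant $1$ and hence zero-mean. For the second moment I would partition $u\cup v$ into the three disjoint blocks $u\cap v$, $u\setminus v$, and $v\setminus u$ and split the expectation by independence: each index in $u\cap v$ contributes $\mathbb{E}[\Psi_{\{i\},j_i}(X_i)\Psi_{\{i\},k_i}(X_i)]=\delta_{j_i k_i}$, whereas each index in $u\setminus v$ or $v\setminus u$ contributes a single-polynomial factor $\mathbb{E}[\Psi_{\{i\},j_i}(X_i)]=0$. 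Hence the product vanishes whenever $u\triangle v\ne\emptyset$, i.e. whenever $u\ne v$, and when $u=v$ it reduces to $\prod_{i\in u}\delta_{j_i k_i}$, which is $1$ exactly when $\mathbf{j}_u=\mathbf{k}_v$ and $0$ otherwise, reproducing \eqref{4.26}.

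I do not expect a deep obstacle, since independence collapses every multivariate computation to univariate ones and the whole argument is driven by Fubini together with the one-dimensional orthonormality relations. The one point requiring care is the orthogonality-to-lower-degree step in Part (1): although the fully expanded product $\prod_{i\in u}\Psi_{\{i\},j_i}(x_i)$ contains monomials in which some coordinates appear with degree zero, orthogonality to every \emph{lower-total-degree} polynomial nonetheless holds, because a strict drop in total degree necessarily forces a strict drop in at least one coordinate degree, whereupon that coordinate's univariate orthogonality eliminates the term. This pigeonhole observation, combined with the fact that the $M_{u,l}=\binom{l-1}{|u|-1}$ products with $|\mathbf{j}_u|=l$ are mutually orthonormal and hence linearly independent, also confirms that they form an orthonormal basis of $\mathcal{Z}_l^u$, so that identifying them with $\Psi_{u,\mathbf{j}_u}$ is consistent with the dimension count established earlier.
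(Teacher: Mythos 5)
Your proof is correct. Note that the paper itself states this corollary \emph{without} proof: it is offered as an immediate specialization of Proposition \ref{p3} and Corollary \ref{c1} to product-type measures, where the tensor-product structure of measure-consistent orthonormal polynomials is treated as classical. Your argument is therefore more self-contained than what the paper provides. You verify from scratch that the products $\prod_{i\in u}\Psi_{\{i\},j_i}(x_i)$ satisfy the defining orthogonality \eqref{4.2} --- the key pigeonhole point, that a strict drop in total degree forces a strict drop in at least one coordinate degree, is exactly the step that makes this work despite some coordinates possibly appearing with equal degree --- and that they have unit norm and the right count $\binom{l-1}{|u|-1}$ to span $\mathcal{Z}_l^u$. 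For the moment identities, the paper's implicit route would be to case-split according to Corollary \ref{c1} (nested subsets; unequal total degrees; the residual ``otherwise'' integral), and that residual case still requires precisely the factorization you invoke; your single three-block partition of $u\cup v$ into $u\cap v$, $u\setminus v$, and $v\setminus u$ handles all cases of \eqref{4.26} uniformly, which is cleaner in the independent setting. The only hypotheses you use --- existence of the univariate orthonormal systems and applicability of Fubini --- are exactly what Items (1) and (2) of Assumption \ref{a1} supply, so the argument is complete as written.
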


According to Corollary \ref{c2}, the statistical properties of random orthonormal polynomials for independent random variables, especially the covariances between $\Psi_{u,\mathbf{j}_u}(\mathbf{X}_u)$ and $\Psi_{v,\mathbf{k}_v}(\mathbf{X}_v)$, simplify greatly.  These simplified results, readily exploited by the existing PDD \cite{rahman18}, are no longer valid for dependent variables.  In contrast, Corollary \ref{c1} is a general result, is meant for dependent variables, and will be needed in the next section.

\section{Generalized polynomial dimensional decomposition}
The GPDD of a random variable $y(\mathbf{X}) \in L^2(\Omega,\mathcal{F},\mathbb{P})$ is simply the expansion of $y(\mathbf{X})$ with respect to a complete, hierarchically ordered, orthogonal polynomial basis of $L^2(\Omega,\mathcal{F},\mathbb{P})$. A preliminary, less sophisticated version of GPDD was sketched out in a prior work by the author \cite{rahman14b}.  Here, a more rigorous treatment of GPDD entailing dimension-wise splitting of polynomial spaces and functional analysis is presented.  This version is new, has not been published elsewhere, and is, therefore, formally presented here as Theorem \ref{t1}.

\begin{theorem}
\label{t1}
Let $\mathbf{X}:=(X_{1},\ldots,X_{N})^T$ be a vector of $N \in \mathbb{N}$ input random variables fulfilling Assumption \ref{a1}. For $\emptyset \ne u \subseteq \{1,\ldots,N\}$ and $\mathbf{X}_u:=(X_{i_1},\ldots,X_{i_{|u|}})^T:(\Omega^u,\mathcal{F}^u)\to(\mathbb{A}^u,\mathcal{B}^u)$, denote by $\{\Psi_{u,\mathbf{j}_u}(\mathbf{x}_u): \mathbf{j}_u \in \mathbb{N}^{|u|}\}$ the set of standardized multivariate orthogonal polynomials consistent with the probability measure $f_{\mathbf{X}_u}d\mathbf{x}_u$.  Then
\vspace{0.05in}
\begin{enumerate}[{$(1)$}]
\item
for any random variable $y(\mathbf{X}) \in L^2(\Omega, \mathcal{F}, \mathbb{P})$ there exists a Fourier-like series in multivariate orthogonal polynomials in $\mathbf{X}$, referred to as the GPDD of
\begin{equation}
\begin{array}{rcl}
y(\mathbf{X}) & \sim  &
y_{\emptyset} +
\displaystyle \sum_{\emptyset \ne u\subseteq\{1,\ldots,N\}}
\sum_{l=|u|}^\infty
\sum_{\substack{\mathbf{j}_u \in \mathbb{N}^{|u|} \\ |\mathbf{j}_u| = l}}
C_{u,\mathbf{j}_u} \Psi_{u,\mathbf{j}_u}(\mathbf{X}_u) \\
              &  =    &
y_\emptyset + \displaystyle \sum_{\emptyset \ne u\subseteq\{1,\ldots,N\}}
\sum_{\mathbf{j}_u \in \mathbb{N}^{|u|}} C_{u,\mathbf{j}_u} \Psi_{u,\mathbf{j}_u}(\mathbf{X}_u),
\end{array}
\label{5.1}
\end{equation}
where the zero-variate Fourier coefficient $y_{\emptyset} \in \mathbb{R}$ is defined by
\begin{equation}
y_{\emptyset} :=\mathbb{E}\left[ y(\mathbf{X})\right]:=
\int_{\mathbb{A}^N} y(\mathbf{x}) f_{\mathbf{X}}(\mathbf{x}) d\mathbf{x}
\label{5.2}
\end{equation}
and the $|u|$-variate Fourier coefficients $C_{u,\mathbf{j}_u} \in \mathbb{R}$ satisfy the infinite-dimensional linear system
\begin{equation}
\sum_{\emptyset \ne v \subseteq \{1,\ldots,N\}}
\sum_{\mathbf{k}_v \in \mathbb{N}^{|v|}}
C_{v,\mathbf{k}_v}J_{u,\mathbf{j}_u;v,\mathbf{k}_v}=I_{u,\mathbf{j}_u},\;
\emptyset \ne u \subseteq \{1,\ldots,N\},\; \mathbf{j}_u \in \mathbb{N}^{|u|},
\label{5.3}
\end{equation}
with the integrals
\begin{subequations}
\begin{align}
I_{u,\mathbf{j}_u} & :=\mathbb{E}\left[ y(\mathbf{X})\Psi_{u,\mathbf{j}_u}(\mathbf{X}_u) \right]:=
\int_{\mathbb{A}^{N}}y(\mathbf{x})\Psi_{u,\mathbf{j}_u}(\mathbf{x}_u)f_{\mathbf{X}}(\mathbf{x}) d\mathbf{x},
\label{5.4a} \\
J_{u,\mathbf{j}_u;v,\mathbf{k}_v} & :=\mathbb{E}
\left[ \Psi_{u,\mathbf{j}_u}(\mathbf{X}_v)\Psi_{v,\mathbf{k}_v}(\mathbf{X}_v) \right]:=
\int_{\mathbb{A}^{N}}\Psi_{u,\mathbf{j}_u}(\mathbf{x}_u)\Psi_{v,\mathbf{k}_v}(\mathbf{x}_v)
f_{\mathbf{X}}(\mathbf{x}) d\mathbf{x};
\label{5.4b}
\end{align}
\label{5.4}
\end{subequations}
and
\vspace{0.05in}
\item
the GPDD of $y(\mathbf{X}) \in L^2(\Omega, \mathcal{F}, \mathbb{P})$ converges to $y(\mathbf{X})$ in mean-square, that is, for \[
y_{S,m}(\mathbf{X}):=
y_\emptyset +
\sum_{\substack{\emptyset \ne u \subseteq \{1,\ldots,N\} \\ 1 \le |u| \le S}}
\sum_{\substack{\mathbf{j}_u \in \mathbb{N}^{|u|} \\ |u| \le |\mathbf{j}_u| \le m}}
C_{u,\mathbf{j}_u} \Psi_{u,\mathbf{j}_u}(\mathbf{X}_u),
\]
where $1 \le S \le N$ and $|u| \le m < \infty$ are integers,
\[
\lim_{S \to N,\;m \to \infty} \mathbb{E}\left[ y_{S,m}^2(\mathbf{X}) \right] =
\mathbb{E}\left[ y^2(\mathbf{X})\right];
\]
converges in probability, that is, for any $\epsilon>0$,
\[
\lim_{S \to N,\;m \to \infty} \mathbb{P}\left( \left|y_{S,m}(\mathbf{X})-y(\mathbf{X})\right| > \epsilon \right) = 0;
\]
and converges in distribution, that is, for any $\xi \in \mathbb{R}$,
\[
\lim_{S \to N,\;m \to \infty} F_{S,m}(\xi) = F(\xi)
\]
such that $F_{S,m}(\xi):=\mathbb{P}(y_{S,m}(\mathbf{X}) \le \xi)$ and $F(\xi):=\mathbb{P}(y(\mathbf{X}) \le \xi)$ are continuous distribution functions.
\end{enumerate}
\end{theorem}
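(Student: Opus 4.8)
The plan is to read Part (1) as a direct consequence of the completeness result and Part (2) as a standard Hilbert-space projection argument, with the decisive structural step being the observation that the coefficient system \eqref{5.3} block-diagonalizes by total degree. First I would dispose of Part (1). Proposition~\ref{p2} guarantees that the constant together with the standardized polynomials $\{\Psi_{u,\mathbf{j}_u}\}$ spans a dense subspace of $L^2(\mathbb{A}^N,\mathcal{B}^N,f_{\mathbf{X}}d\mathbf{x})$, so every $y(\mathbf{X}) \in L^2(\Omega,\mathcal{F},\mathbb{P})$ admits an $L^2$-convergent expansion of the form \eqref{5.1}. Taking the expectation of that series and using $\mathbb{E}[\Psi_{u,\mathbf{j}_u}(\mathbf{X}_u)]=0$ from Corollary~\ref{c1} gives $y_\emptyset=\mathbb{E}[y(\mathbf{X})]$, which is \eqref{5.2}. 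To produce the coefficient equations I would form the inner product of the convergent series with each fixed $\Psi_{u,\mathbf{j}_u}$; continuity of the inner product on $L^2$ justifies interchanging the sum with the inner product, and the covariance structure of Corollary~\ref{c1} rewrites $\mathbb{E}[y\,\Psi_{u,\mathbf{j}_u}]$ as $\sum_{v,\mathbf{k}_v} C_{v,\mathbf{k}_v}\,\mathbb{E}[\Psi_{u,\mathbf{j}_u}\Psi_{v,\mathbf{k}_v}]$, which is exactly \eqref{5.3} with the integrals $I_{u,\mathbf{j}_u}$ and $J_{u,\mathbf{j}_u;v,\mathbf{k}_v}$ of \eqref{5.4a} and \eqref{5.4b}.

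The structural heart of the proof is a degree-wise orthogonal decomposition. By Corollary~\ref{c1}, polynomials of distinct total degrees are orthogonal, so $L^2(\mathbb{A}^N,\mathcal{B}^N,f_{\mathbf{X}}d\mathbf{x})$ is the orthogonal closure of $\bigoplus_{l}\mathcal{V}_l$, where $\mathcal{V}_l:=\text{span}\{\Psi_{u,\mathbf{j}_u}:|\mathbf{j}_u|=l\}$ is finite-dimensional for each $l$ and $\mathcal{V}_0=\boldsymbol{1}$. Because $J_{u,\mathbf{j}_u;v,\mathbf{k}_v}=0$ whenever $|\mathbf{j}_u|\ne|\mathbf{k}_v|$, the infinite system \eqref{5.3} decouples into one finite system per degree $l$. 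Each block is the Gram (normal) system for the basis of $\mathcal{V}_l$; its matrix is positive definite because that basis is linearly independent by Proposition~\ref{p1}, so the block is uniquely solvable and its solution is precisely the orthogonal projection $P_l y$ of $y$ onto $\mathcal{V}_l$, i.e. $\sum_{|\mathbf{j}_u|=l}C_{u,\mathbf{j}_u}\Psi_{u,\mathbf{j}_u}=P_l y$. This simultaneously establishes the well-posedness of \eqref{5.3} and identifies the coefficients.

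For Part (2) I would observe that setting $S=N$ in the definition of $y_{S,m}$ retains, for each degree $l\le m$, \emph{all} standardized polynomials of that degree (the constraints $|u|\le|\mathbf{j}_u|$ and $|u|\le N$ being automatic for $\mathbf{j}_u\in\mathbb{N}^{|u|}$ and $u\subseteq\{1,\ldots,N\}$), so that $y_{N,m}=\sum_{l=0}^m P_l y$ is the orthogonal projection of $y$ onto $\bigoplus_{l=0}^m\mathcal{V}_l$. Completeness (Proposition~\ref{p2}) then gives, via Parseval across degrees, $\|y-y_{N,m}\|^2_{f_{\mathbf{X}}d\mathbf{x}}=\sum_{l>m}\|P_l y\|^2_{f_{\mathbf{X}}d\mathbf{x}}\to 0$ as $m\to\infty$; since $S$ is integer-valued and bounded by $N$, the stated double limit is realized once $S=N$. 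Orthogonality of the summands then yields $\mathbb{E}[y_{N,m}^2]\to\mathbb{E}[y^2]$, Chebyshev's inequality applied to $(y_{N,m}-y)^2$ upgrades mean-square convergence to convergence in probability, and the latter implies convergence in distribution; continuity of the limiting distribution function $F$ promotes the convergence $F_{S,m}(\xi)\to F(\xi)$ to every $\xi\in\mathbb{R}$.

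The hard part will be the non-orthonormality of the basis across overlapping, non-nested index sets $u,v$ of equal degree, which is exactly what prevents the coefficients from reducing to the naive projections $\mathbb{E}[y\,\Psi_{u,\mathbf{j}_u}]$ and forces the coupled system \eqref{5.3}. The resolution is the degree-wise block-diagonalization above: it confines all coupling to finite-dimensional subspaces $\mathcal{V}_l$ and restores the clean Hilbert-space projection picture on which the three convergence claims rest.
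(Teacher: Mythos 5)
Your proposal is correct, and its skeleton (Proposition \ref{p2} denseness $\Rightarrow$ existence of the expansion $\Rightarrow$ mean-square convergence $\Rightarrow$ convergence in probability $\Rightarrow$ convergence in distribution) coincides with the paper's. Where you genuinely diverge is in how the coefficient equations \eqref{5.2}--\eqref{5.3} are obtained: the paper runs a variational argument, defining the mean-square error $e_{\text{GPDD}}$ between $y(\mathbf{X})$ and its full GPDD, differentiating under the expectation with respect to $y_\emptyset$ and each $C_{u,\mathbf{j}_u}$, and setting the derivatives to zero, whereas you test the $L^2$-convergent series against $1$ and against each fixed $\Psi_{u,\mathbf{j}_u}$, using continuity of the inner product and Corollary \ref{c1}. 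These are dual routes to the same normal equations; the paper's route has the side benefit that it is reused almost verbatim to prove the best-approximation property (Proposition \ref{p6}), while your route is shorter and avoids justifying the interchange of differentiation and expectation. Your second genuine addition is promoting the degree-wise block-diagonalization ($J_{u,\mathbf{j}_u;v,\mathbf{k}_v}=0$ for $|\mathbf{j}_u|\ne|\mathbf{k}_v|$, hence finite Gram blocks whose solutions are the projections $P_l y$ onto the mutually orthogonal finite-dimensional spaces $\mathcal{V}_l$) into the proof itself; the paper states this only as post-theorem discussion, around \eqref{5.9}--\eqref{5.11}. This buys you something real: the paper passes from denseness of the span to mean-square convergence of the specific series \eqref{5.5f} rather tersely, while your identification $y_{N,m}=\sum_{l=0}^m P_l y$ makes both the existence of the representation and the Parseval-type limit in Part (2) a clean consequence of standard Hilbert-space projection theory.

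One small caveat: your positive-definiteness claim for each degree block needs linear independence of the collection $\{\Psi_{u,\mathbf{j}_u}: |\mathbf{j}_u|=l\}$ \emph{across} different subsets $u$, but Proposition \ref{p1} only establishes independence within a fixed $u$ (as a basis of $\mathcal{Z}_l^u$). The paper makes the same positive-definiteness assertion without proof in its discussion of \eqref{5.11}, so this is not a gap relative to the paper's own standard, and the theorem as stated only requires that the coefficients \emph{satisfy} \eqref{5.3}, not that the system be uniquely solvable; still, you should either prove cross-$u$ independence (e.g., via nonempty interior of the support, so that $L^2$-equality of polynomials forces polynomial identity) or weaken the claim to consistency of each block system.
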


\begin{proof}
Under Assumption \ref{a1}, a complete infinite set of multivariate orthogonal polynomials in $\mathbf{x}_u$ consistent with the probability measure $f_{\mathbf{X}_u}(\mathbf{x}_u)d\mathbf{x}_u$ exists.  From Proposition \ref{p2} and the fact that standardization is merely scaling, the set of polynomials from the union-sum collection
\begin{equation}
\displaystyle \boldsymbol{1} \oplus  \bigcup_{\emptyset \ne u \subseteq \{1,\ldots,N\}} \bigoplus_{l=|u|}^\infty
\text{span}\{ \Psi_{u,\mathbf{j}_u}(\mathbf{x}_u): |\mathbf{j}_u|=l, \mathbf{j}_u \in \mathbb{N}^{|u|} \}
= \Pi^N
\label{5.5}
\end{equation}
is also dense in $L^2(\mathbb{A}^N,\mathcal{B}^{N},f_{\mathbf{X}}d\mathbf{x})$.  Equivalently, the set of random polynomials
\begin{equation}
\displaystyle \boldsymbol{1} \oplus  \bigcup_{\emptyset \ne u \subseteq \{1,\ldots,N\}} \bigoplus_{l=|u|}^\infty
\text{span}\{ \Psi_{u,\mathbf{j}_u}(\mathbf{X}_u): |\mathbf{j}_u|=l, \mathbf{j}_u \in \mathbb{N}^{|u|} \}
\label{5.5b}
\end{equation}
is dense in $L^2(\Omega, \mathcal{F}, \mathbb{P})$ as well.  Therefore, any random variable $y(\mathbf{X})\in L^2(\Omega, \mathcal{F}, \mathbb{P})$ can be expanded as shown in \eqref{5.1}.  Combining the two inner sums of the expansion forms the equality in the second line of \eqref{5.1}.

From the denseness, every element of $L^2(\Omega, \mathcal{F}, \mathbb{P})$ is a limit point of the set of random polynomials in \eqref{5.5b}. Therefore, the infinite series in \eqref{5.1} converges to $y(\mathbf{X})$ in mean-square, that is,
\begin{equation}
\mathbb{E} \Biggl[ \Biggl\{
\displaystyle y_\emptyset +
\sum_{\emptyset \ne u \subseteq \{1,\ldots,N\}}
\sum_{\mathbf{j}_u \in \mathbb{N}^{|u|}}
C_{u,\mathbf{j}_u} \Psi_{u,\mathbf{j}_u}(\mathbf{X}_u)
\Biggr\}^2
\Biggl]
=
\mathbb{E} \left[ y^2(\mathbf{X}) \right],
\label{5.5f}
\end{equation}
which is similar to the Parseval identity \cite{courant66} for a multivariate orthonormal system. Indeed, GPDD converges in mean-square to the correct limit.

Furthermore, as GPDD converges in mean-square, it does so in probability.  Moreover, as the expansion converges in probability, it also converges in distribution.

Finally, to find the Fourier coefficients, define a second moment
\begin{equation}
e_{\text{GPDD}}:= \mathbb{E}\Biggl[
\Biggl\{
y(\mathbf{X}) -
y_\emptyset - \displaystyle \sum_{\emptyset \ne v\subseteq\{1,\ldots,N\}}
\sum_{\mathbf{k}_v \in \mathbb{N}^{|v|}} C_{v,\mathbf{k}_v} \Psi_{v,\mathbf{k}_v}(\mathbf{X}_v)
\Biggr \}^2
\Biggr]
\label{5.6}
\end{equation}
of the difference between $y(\mathbf{X})$ and its full GPDD.  Differentiate both sides of \eqref{5.6} with respect to $y_\emptyset$ and $C_{u,\mathbf{j}_u}$, $\emptyset \ne u \subseteq \{1,\ldots,N\}$, $\mathbf{j}_u \in \mathbb{N}^{|u|}$, to write
\begin{equation}
\begin{array}{rcl}
\displaystyle \frac{\partial e_{\text{GPDD}}}{\partial y_\emptyset}
& = &
\displaystyle
\frac{\partial}{\partial y_\emptyset}
\mathbb{E} \Biggl[
\Biggl\{
y(\mathbf{X}) - y_\emptyset - \displaystyle \sum_{\emptyset \ne v\subseteq\{1,\ldots,N\}}
\sum_{\mathbf{k}_v \in \mathbb{N}^{|v|}} C_{v,\mathbf{k}_v} \Psi_{v,\mathbf{k}_v}(\mathbf{X}_v)
\Biggr\}^2
\Biggl]
 \\
&  = &
\displaystyle
\mathbb{E} \Biggl[
\frac{\partial}{\partial y_\emptyset} \Biggl\{
y(\mathbf{X}) -  y_\emptyset - \displaystyle \sum_{\emptyset \ne v\subseteq\{1,\ldots,N\}}
\sum_{\mathbf{k}_v \in \mathbb{N}^{|v|}} C_{v,\mathbf{k}_v} \Psi_{v,\mathbf{k}_v}(\mathbf{X}_v)
  \Biggr\}^2
\Biggr]
 \\
&  = &
\displaystyle
 2 \mathbb{E} \Biggl[
 \Biggl\{
 y_\emptyset + \displaystyle \sum_{\emptyset \ne v\subseteq\{1,\ldots,N\}}
\sum_{\mathbf{k}_v \in \mathbb{N}^{|v|}} C_{v,\mathbf{k}_v} \Psi_{v,\mathbf{k}_v}(\mathbf{X}_v) - y(\mathbf{X}) \Biggr\}
\times 1
 \Biggr]
 \\
&  = &
\displaystyle
2 \left\{ y_\emptyset - \mathbb{E}\left[ y(\mathbf{X})\right] \right\}
\end{array}
\label{5.7}
\end{equation}
and
\begin{equation}
\begin{array}{rcl}
\displaystyle \frac{\partial e_{\text{GPDD}}}{\partial C_{u,\mathbf{j}_u}}
& = &
\displaystyle
\frac{\partial}{\partial C_{u,\mathbf{j}_u}}
\mathbb{E} \Biggl[
\Biggl\{
y(\mathbf{X}) - y_\emptyset - \displaystyle \sum_{\emptyset \ne v\subseteq\{1,\ldots,N\}}
\sum_{\mathbf{k}_v \in \mathbb{N}^{|v|}} C_{v,\mathbf{k}_v} \Psi_{v,\mathbf{k}_v}(\mathbf{X}_v)
\Biggr\}^2
\Biggl]
 \\
&  = &
\displaystyle
\mathbb{E} \Biggl[
\frac{\partial}{\partial C_{u,\mathbf{j}_u}} \Biggl\{
y(\mathbf{X}) -  y_\emptyset - \displaystyle \sum_{\emptyset \ne v\subseteq\{1,\ldots,N\}}
\sum_{\mathbf{k}_v \in \mathbb{N}^{|v|}} C_{v,\mathbf{k}_v} \Psi_{v,\mathbf{k}_v}(\mathbf{X}_v)
  \Biggr\}^2
\Biggr]
 \\
&  = &
\displaystyle
 2 \mathbb{E} \Biggl[
 \Biggl\{
 y_\emptyset +  \displaystyle \sum_{\emptyset \ne v\subseteq\{1,\ldots,N\}}
\sum_{\mathbf{k}_v \in \mathbb{N}^{|v|}} C_{v,\mathbf{k}_v} \Psi_{v,\mathbf{k}_v}(\mathbf{X}_v) - y(\mathbf{X}) \Biggr\}
\Psi_{u,\mathbf{j}_u}(\mathbf{X}_u)
 \Biggr]
 \\
&  = &
\displaystyle
2\Biggl\{
\sum_{\emptyset \ne v \subseteq \{1,\ldots,N\}}
\sum_{\mathbf{k}_v \in \mathbb{N}^{|v|}}\!\!
C_{v,\mathbf{k}_v} \mathbb{E}\left[ \Psi_{u,\mathbf{j}_u}(\mathbf{X}_u)\Psi_{v,\mathbf{k}_v}(\mathbf{X}_v)\right]-
\mathbb{E}\left[ y(\mathbf{X})\Psi_{u,\mathbf{j}_u}(\mathbf{X}_u)\right]
\Biggr\}
\\
&  = &
\displaystyle
2\Biggl\{
\sum_{\emptyset \ne v \subseteq \{1,\ldots,N\}}
\sum_{\mathbf{k}_v \in \mathbb{N}^{|v|}}
C_{v,\mathbf{k}_v} J_{u,\mathbf{j}_u;v,\mathbf{k}_v} - I_{u,\mathbf{j}_u}
\Biggr\}.
\end{array}
\label{5.8}
\end{equation}
Here, the second, third, and fourth lines of both \eqref{5.7} and \eqref{5.8} are obtained by interchanging the differential and expectation operators; performing the differentiation; and swapping the expectation and summation operators and then applying Corollary \ref{c1}, respectively.  The interchanges are permissible as the infinite sum is convergent as demonstrated in the preceding paragraph.  The last line of \eqref{5.8} is formed using Corollary \ref{c1} and definitions of the two integrals in \eqref{5.4a} and \eqref{5.4b}.  Setting ${\partial e_{\text{GPDD}}}/{\partial y_\emptyset}=0$ in \eqref{5.7} and ${\partial e_{\text{GPDD}}}/{\partial C_{u,\mathbf{j}_u}}=0$ in \eqref{5.8} yields \eqref{5.2} and \eqref{5.3}, respectively, completing the proof.
\end{proof}

It should be emphasized that the function $y$ must be square-integrable for the mean-square and other convergences to hold.  However, the rate of convergence depends on the smoothness of the function.  The smoother the function, the faster the convergence. If the function is a polynomial, then its GPDD exactly reproduces the function.  These results can be easily proved using classical approximation theory.

It is important to recognize that the definitions of the integrals $I_{u,\mathbf{j}_u}$ and $J_{u,\mathbf{j}_u;v,\mathbf{k}_v}$ in \eqref{5.4a} and \eqref{5.4b} are not identical to those presented in the author's past work \cite{rahman14b}.  There, the aforementioned integrals were defined as
\footnote{Strictly speaking, the integrals in \cite{rahman14b} were defined using the support of the density function of $\mathbf{X}$ as $\mathbb{R}^N$.  The extension to a support $\mathbb{A}^N \subseteq \mathbb{R}^N$ should follow readily.}
\begin{equation}
\bar{I}_{u,\mathbf{j}_u}  :=
\int_{\mathbb{A}^{N}}y(\mathbf{x})\Psi_{u,\mathbf{j}_u}(\mathbf{x}_u)
f_{\mathbf{X}_u}(\mathbf{x}_u)f_{\mathbf{X}_{-u}}(\mathbf{x}_{-u}) d\mathbf{x}
\label{5.8a}
\end{equation}
and
\begin{equation}
\bar{J}_{u,\mathbf{j}_u;v,\mathbf{k}_v} :=\mathbb{E}
\left[ \Psi_{u,\mathbf{j}_u}(\mathbf{X}_v)\Psi_{v,\mathbf{k}_v}(\mathbf{X}_v) \right]:=
\int_{\mathbb{A}^{N}}\Psi_{u,\mathbf{j}_u}(\mathbf{x}_u)\Psi_{v,\mathbf{k}_v}(\mathbf{x}_v)
f_{\mathbf{X}_u}(\mathbf{x}_u)f_{\mathbf{X}_{v \cap -u}}(\mathbf{x}_{v \cap -u}) d\mathbf{x}.
\label{5.8b}
\end{equation}
Clearly, the density functions in \eqref{5.8a} and \eqref{5.8b} are different than those in \eqref{5.4a} and \eqref{5.4b}.  As a result, the former integrals cannot be interpreted as expectations as the latter integrals.  The difference arises due to distinct perspectives involved in deriving the final expressions of the expansion coefficients.  Consequently, the resultant linear systems from this and past works are also different, although both lead to the calculation of the expansion coefficients. Indeed, the linear system \eqref{5.3} is new and has not been published elsewhere.  More importantly, the new definitions of the two integrals in \eqref{5.4a} and \eqref{5.4b} and the linear system \eqref{5.3} enable a decoupling procedure for calculating the expansion coefficients efficiently, to be discussed next.

The system \eqref{5.3} can be broken down further as many of the integral coefficients, that is, $J_{u,\mathbf{j}_u;v,\mathbf{k}_v}$, vanishes, according to Corollary \ref{c1}.  Indeed, as $J_{u,\mathbf{j}_u;v,\mathbf{k}_v}=0$ for $|\mathbf{j}_u|\ne |\mathbf{k}_v|$, \eqref{5.3} is actually an infinite system of uncoupled finite-dimensional linear systems. The Fourier coefficients interact with each other only for a specific degree -- a consequence of employing orthogonal polynomial basis.  By reshuffling the coefficients according to the degree $1 \le l < \infty$, the GPDD in \eqref{5.1} can also be written as
\begin{equation}
y(\mathbf{X}) =
\displaystyle
y_\emptyset +
\sum_{l \in \mathbb{N}}
\sum_{\substack{\emptyset \ne u \subseteq \{1,\ldots,N\} \\ 1 \le |u| \le \min(N,l)}}
\sum_{\substack{\mathbf{j}_u \in \mathbb{N}^{|u|} \\ |\mathbf{j}_u|=l}}
C_{u,\mathbf{j}_u} \Psi_{u,\mathbf{j}_u}(\mathbf{X}_u).
\label{5.9}
\end{equation}
For each degree $l$, there are
\begin{equation}
Q_{N,l}=
\displaystyle
\sum_{s=1}^{\min(N,l)} \binom{N}{s}\binom{l-1}{s-1} < \infty
\label{5.10}
\end{equation}
number of Fourier coefficients $C_{u,\mathbf{j}_u}$, $1 \le |u| \le \min(N,l)$, $|\mathbf{j}_u|=l$, which satisfy the $Q_{N,l}\times Q_{N,l}$ linear system
\begin{equation}
\displaystyle
\sum_{\substack{\emptyset \ne v \subseteq \{1,\ldots,N\} \\ 1 \le |v| \le \min(N,l)}}
\sum_{\substack{\mathbf{k}_v \in \mathbb{N}^{|v|} \\ |\mathbf{k}_v|=|\mathbf{j}_u|}}
C_{v,\mathbf{k}_v}J_{u,\mathbf{j}_u;v,\mathbf{k}_v}=I_{u,\mathbf{j}_u},~
1 \le |u| \le \min(N,l),~
|\mathbf{j}_u|=l.
\label{5.11}
\end{equation}
The coefficient matrix in the matrix form of \eqref{5.11} comprises expectations of the product of two orthogonal polynomials.  In other words, the coefficient matrix is a Gram matrix, which is positive-definite and hence invertible. Indeed, \eqref{5.9}, \eqref{5.10}, and \eqref{5.11} facilitate a systematic and computationally efficient procedure for determining the Fourier coefficients of GPDD.

\begin{corollary}
Let $\mathbf{X}=(X_1,\ldots,X_N)^T$ be a vector of independent, but not necessarily identical, input random variables, satisfying Assumption \ref{a1}.  Denote by $f_{X_i}(x_i)$, $i=1,\ldots,N$, the marginal density function of the $i$th random variable $X_i$ and by $\Psi_{\{i\},j_i}(x_i)$ the $j_i$th-degree univariate orthonormal polynomial in $x_i$, which is obtained consistent with the probability measure $f_{X_i}(x_i)dx_i$. Then the proposed GPDD reduces to the existing PDD, yielding
\begin{equation}
y(\mathbf{X}) \sim
y_\emptyset +
\displaystyle
\sum_{\emptyset \ne u\subseteq\{1,\ldots,N\}}
\sum_{\mathbf{j}_u \in \mathbb{N}^{|u|}}
C_{u,\mathbf{j}_u} \prod_{p=1}^{|u|} \Psi_{\{i_p\},j_{i_p}}(X_{i_p})
\label{5.12}
\end{equation}
with the Fourier coefficients
\[
y_{\emptyset} =\mathbb{E}\left[ y(\mathbf{X}) \right] =
\int_{\mathbb{A}^N} y(\mathbf{x}) \prod_{i=1}^N f_{X_i}(x_i) dx_i
\]
and
\[
C_{u,\mathbf{j}_u} = \mathbb{E}\left[ y(\mathbf{X}) \prod_{p=1}^{|u|} \Psi_{\{i_p\},j_{i_p}}(X_{i_p}) \right]:=
\int_{\mathbb{A}^N} y(\mathbf{x}) \prod_{p=1}^{|u|} \Psi_{\{i_p\},j_{i_p}}(x_{i_p}) \prod_{i=1}^N f_{X_i}(x_i) dx_i.
\]
\label{c4}
\end{corollary}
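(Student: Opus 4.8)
The plan is to obtain this corollary as a direct specialization of Theorem \ref{t1}, using the factorization and orthonormality already recorded in Corollary \ref{c2}. Because $\mathbf{X}$ has independent components and satisfies Assumption \ref{a1}, both Theorem \ref{t1} and Corollary \ref{c2} apply simultaneously. I would first invoke part (1) of Corollary \ref{c2}, which asserts that each standardized multivariate orthogonal polynomial factorizes as $\Psi_{u,\mathbf{j}_u}(\mathbf{x}_u)=\prod_{p=1}^{|u|}\Psi_{\{i_p\},j_{i_p}}(x_{i_p})$. Substituting this product form directly into the general GPDD expansion \eqref{5.1} of Theorem \ref{t1} immediately produces the claimed expansion \eqref{5.12}, with the mean-square sense of $\sim$ and all convergence statements inherited verbatim from Theorem \ref{t1}.

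The decisive simplification comes from the second-moment relation \eqref{4.26} of Corollary \ref{c2}. Comparing \eqref{4.26} with the definition \eqref{5.4b} of $J_{u,\mathbf{j}_u;v,\mathbf{k}_v}$, I would observe that $J_{u,\mathbf{j}_u;v,\mathbf{k}_v}=\mathbb{E}[\Psi_{u,\mathbf{j}_u}(\mathbf{X}_u)\Psi_{v,\mathbf{k}_v}(\mathbf{X}_v)]$ equals $1$ when $u=v$ and $\mathbf{j}_u=\mathbf{k}_v$ and vanishes otherwise. Thus the coefficient matrix of the linear system \eqref{5.3} collapses to the identity, the infinite system decouples completely, and it is solved uniquely by $C_{u,\mathbf{j}_u}=I_{u,\mathbf{j}_u}$ for every $\emptyset\ne u\subseteq\{1,\ldots,N\}$ and $\mathbf{j}_u\in\mathbb{N}^{|u|}$.

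It then remains to evaluate $I_{u,\mathbf{j}_u}$ and $y_\emptyset$ explicitly. Starting from the definition \eqref{5.4a}, I would insert the product form of $\Psi_{u,\mathbf{j}_u}$ together with the factored joint density $f_{\mathbf{X}}(\mathbf{x})=\prod_{i=1}^N f_{X_i}(x_i)$, valid precisely because the variables are independent. This yields the stated closed form for $C_{u,\mathbf{j}_u}$ as the expectation of $y(\mathbf{X})$ against the product of univariate orthonormal polynomials, and the zero-variate coefficient $y_\emptyset$ follows identically from \eqref{5.2} with the product density substituted.

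I do not anticipate any substantive obstacle; once Corollary \ref{c2}(1) and the orthonormality \eqref{4.26} are in hand, the argument is essentially bookkeeping. The only point requiring mild care is confirming that the decoupled system is solved uniquely by $C_{u,\mathbf{j}_u}=I_{u,\mathbf{j}_u}$, which is immediate since the identity coefficient matrix is trivially invertible. The conceptual message worth emphasizing is that the coupling among Fourier coefficients, which is the hallmark of GPDD for dependent variables, vanishes entirely under independence, thereby recovering the classical uncoupled PDD.
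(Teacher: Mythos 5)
Your proposal is correct and follows essentially the same route as the paper's own proof: invoke Corollary \ref{c2} for the product factorization of $\Psi_{u,\mathbf{j}_u}$ under independence, observe that the orthonormality relation \eqref{4.26} makes the coefficient matrix of the linear system \eqref{5.3} collapse (so $C_{u,\mathbf{j}_u}=I_{u,\mathbf{j}_u}$), and substitute the product density into \eqref{5.2} and \eqref{5.4a}. The only difference is that you spell out what the paper compresses into ``the result follows readily,'' and in doing so you are actually slightly more careful than the paper's wording, which cites only the vanishing of $J_{u,\mathbf{j}_u;v,\mathbf{k}_v}$ for $u\ne v$ rather than the full identity-matrix structure you note.
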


\begin{proof}
For independent input variables, one has $f_{\mathbf{X}}(\mathbf{x})=\prod_{i=1}^N f_{X_i}(x_i)$, leading to
$\Psi_{u,\mathbf{j}_u}(\mathbf{X}_u)=\prod_{p=1}^{|u|} \Psi_{\{i_p\},j_{i_p}}(X_{i_p})$, as per Corollary \ref{c2}.  Also, the integral $J_{u,\mathbf{j}_u;v,\mathbf{k}_v}$ vanishes whenever $u\ne v$.  Subsequently, the result of Corollary \ref{c4} follows readily.
\end{proof}

Note that the infinite series in \eqref{5.1} or \eqref{5.12} does not necessarily converge almost surely to $y(\mathbf{X})$. Furthermore, there is no guarantee that the moments of PDD or GPDD of order larger than two will converge.  These are due to the fundamental limitations of Fourier or Fourier-like series.

\subsection{Connection to the generalized ADD}
It is important to point out the relation between GPDD and the generalized ADD discussed in Section 3. For instance, comparing \eqref{3.6} and \eqref{4.10} yields the closure of an orthogonal decomposition of
\begin{equation}
\mathcal{W}_u =
\overline{\bigoplus_{l=|u|}^\infty \mathcal{Z}_l^u}
\end{equation}
into polynomial spaces $\mathcal{Z}_l^u$, $|u| \le l < \infty$, resulting in
\begin{equation}
y_u(\mathbf{X}_u) \sim
\displaystyle
\sum_{\mathbf{j}_u \in \mathbb{N}^{|u|}} C_{u,\mathbf{j}_u} \Psi_{u,\mathbf{j}_u}(\mathbf{X}_u).
\label{5.13}
\end{equation}
Indeed, the connection between GPDD and the generalized ADD is clearly palpable, where the former can be viewed as a polynomial variant of the latter.  Here, $C_{u,\mathbf{j}_u} \Psi_{u,\mathbf{j}_u}(\mathbf{X}_u)$ in \eqref{5.1} or \eqref{5.13} represents a $|u|$-variate, $|\mathbf{j}_u|$th-order GPDD component function of $y(\mathbf{X})$, describing the $|\mathbf{j}_u|$th-order polynomial approximation of the $|u|$-variate component function $y_u(\mathbf{X}_u)$ of the generalized ADD.

Moreover, given the second-moment properties of multivariate orthogonal polynomials in Proposition \ref{p3} or Corollary \ref{c1}, it is easy to see why the second-moment properties of $y_u(\mathbf{X}_u)$, that is, \eqref{3.3} and \eqref{3.4}, are automatically satisfied when $y_u(\mathbf{X}_u)$ is expanded as in \eqref{5.13}. Therefore, GPDD inherits all desirable properties of the generalized ADD -- an important, fundamental requirement for any refinement of the latter.

\subsection{Truncation}
The full GPDD contains an infinite number of orthogonal polynomials or coefficients. In practice, the number must be finite, meaning that GPDD must be truncated. However, there are multiple ways to perform the truncation. A straightforward approach adopted in this work entails (1) keeping all polynomials in at most $0 \le S \le N$ variables, thereby retaining the degrees of interaction among input variables less than or equal to $S$, and (2) preserving polynomial expansion orders (total) less than or equal to $S \le m < \infty$.  The result is an $S$-variate, $m$th-order GPDD approximation\footnote{The nouns \emph{degree} and \emph{order} associated with GPDD or orthogonal polynomials are used synonymously in the paper.}
\begin{equation}
\begin{array}{rcl}
y_{S,m}(\mathbf{X}) & = &
\displaystyle
y_\emptyset +
\sum_{s=1}^S~
\sum_{l=s}^m~
\sum_{\substack{\emptyset \ne u \subseteq \{1,\ldots,N\} \\ |u| = s}}
\sum_{\substack{\mathbf{j}_u \in \mathbb{N}^{|u|} \\ |\mathbf{j}_u| = l}}
C_{u,\mathbf{j}_u} \Psi_{u,\mathbf{j}_u}(\mathbf{X}_u) \\
                    & = &
\displaystyle
y_\emptyset +
\sum_{\substack{\emptyset \ne u \subseteq \{1,\ldots,N\} \\ 1 \le |u| \le S}}
\sum_{\substack{\mathbf{j}_u \in \mathbb{N}^{|u|} \\ |u| \le |\mathbf{j}_u| \le m}}
C_{u,\mathbf{j}_u} \Psi_{u,\mathbf{j}_u}(\mathbf{X}_u)
\end{array}
\label{5.14}
\end{equation}
of $y(\mathbf{X})$, containing
\begin{equation}
{\displaystyle
L_{S,m} = 1 + \sum_{s=1}^S \binom{N}{s} \binom{m}{s}
}
\label{5.15}
\end{equation}
number of Fourier coefficients including $y_\emptyset$.  It is important to clarify a few things about the truncated GPDD proposed.  First, the truncation with respect to the polynomial expansion order in \eqref{5.14} is related to the total degree index set
\[
\left\{ \mathbf{j}_u \in \mathbb{N}^{|u|}:\sum_{p=1}^{|u|} j_{i_p} \le m \right\}.
\]
Other kinds of truncation employ the tensor product and hyperbolic cross index sets, to name just two.  The total degree and tensor product index sets are common choices, although the latter suffers from the curse of dimensionality, making it impractical for high-dimensional problems.  The hyperbolic cross index set is a relatively new idea and has yet to receive widespread attention.  All of these choices and possibly others, including their anisotropic versions, can be used for truncating GPDD.  In this work, however, only the total degree index set is used for the GPDD approximation. Second, the right side of \eqref{5.14} contains sums of at most $S$-dimensional orthogonal polynomials, representing at most $S$-variate GPDD component functions of $y$. Therefore, the term ``$S$-variate'' used for the GPDD approximation should be interpreted in the context of including at most $S$-degree interaction of input variables, even though $y_{S,m}$ is strictly an $N$-variate function. Third, when $S=0$, $y_{0,m}=y_\emptyset$ for any $m$ as the outer sums of \eqref{5.14} vanish.  Finally, when $S \to N$ and $m \to \infty$, $y_{S,m}$ converges to $y$ in mean-square sense, generating a hierarchical and convergent sequence of GPDD approximations.

The motivation behind generalized ADD- and GPDD-derived approximations is the following.  In a practical setting, the function $y(\mathbf{X})$, fortunately, has an effective dimension much lower than $N$, meaning that the right side of \eqref{3.1} can be effectively approximated by a sum of lower-dimensional component functions $y_{u}$, $|u|\ll N$, but still maintaining all random variables $\mathbf{X}$ of a high-dimensional stochastic problem. For instance, an $S$-variate, $m$th-order GPDD approximation $y_{S,m}(\mathbf{X})$ is generated, where $0 \le S \le N$ and $S \le m <\infty$ define the largest degree of interactions among input variables and the largest order of orthogonal polynomials retained in a concomitant truncation. The approximation is grounded on a fundamental conjecture known to be true in many real-world applications: given a high-dimensional function $y$, its  $|u|$-variate, $|\mathbf{j}_u|$th-order GPDD component function $C_{u,\mathbf{j}_u} \Psi_{u,\mathbf{j}_u}(\mathbf{X}_u)$ decays rapidly with respect to $|u|$ and $|\mathbf{j}_u|$, leading to an accurate low-variate, low-order approximation of $y$. From \eqref{5.15}, the computational complexity of a truncated GPDD is polynomial, as opposed to exponential, thereby alleviating the curse of dimensionality to a substantial extent.

It is natural to ask about the approximation quality of \eqref{5.14}.  Since the set of polynomials from \eqref{5.5} is complete in $L^2(\mathbb{A}^N,\mathcal{B}^{N},f_{\mathbf{X}}d\mathbf{x})$, the truncation error $y(\mathbf{X})-y_{N,m}(\mathbf{X})$ is orthogonal to any element of the subspace from which $y_{N,m}(\mathbf{X})$ is chosen, as demonstrated below.

\begin{proposition}
Let
\begin{equation}
\Pi_{S,m}^N :=
\displaystyle
\boldsymbol{1} \oplus
\bigcup_{\substack{\emptyset \ne u \subseteq \{1,\ldots,N\} \\ 1 \le |u| \le S} }
\bigoplus_{\substack{\mathbf{j}_u \in \mathbb{N}^{|u|} \\ |u| \le |\mathbf{j}_u| \le m}}
\text{span}\{ \Psi_{u,\mathbf{j}_u}(\mathbf{X}_u): \mathbf{j}_u \in \mathbb{N}^{|u|} \} \subseteq L^2(\Omega,\mathcal{F},\mathbb{P})
\label{5.16}
\end{equation}
be a subspace comprising all polynomials in $\mathbf{X}$ with the degree of interaction at most $1 \le S \le N$ and order at most $S \le m < \infty$, including constants.  For any $y(\mathbf{X}) \in L^2(\Omega,\mathcal{F},\mathbb{P})$, denote by $y_{S,m}(\mathbf{X})$ and $y_{N,m}(\mathbf{X})$ its $S$-variate, $m$th-order and $N$-variate, $m$th-order GPDD approximations, respectively.  Then the truncation error $y(\mathbf{X})-y_{N,m}(\mathbf{X})$ is orthogonal to the subspace $\Pi_{N,m}^N \subseteq L^2(\Omega,\mathcal{F},\mathbb{P})$. Moreover, $\mathbb{E}[\{y(\mathbf{X})-y_{S,m}(\mathbf{X})\}^2] \to 0$ as $S \to N$ and $m \to \infty$.
\label{p5}
\end{proposition}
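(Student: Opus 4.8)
The plan is to recognize $y_{N,m}(\mathbf{X})$ as the orthogonal projection of $y(\mathbf{X})$ onto the finite-dimensional, hence closed, subspace $\Pi_{N,m}^N$ of $L^2(\Omega,\mathcal{F},\mathbb{P})$, and then to extract both assertions from this single fact. For the orthogonality claim I would first observe that $\Pi_{N,m}^N$ is spanned by the constant $1$ together with the random polynomials $\Psi_{v,\mathbf{k}_v}(\mathbf{X}_v)$ for $\emptyset \ne v \subseteq \{1,\ldots,N\}$ and $|v| \le |\mathbf{k}_v| \le m$; consequently it suffices to verify that the residual $y(\mathbf{X})-y_{N,m}(\mathbf{X})$ is orthogonal to each of these spanning elements.

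Orthogonality to the constant is immediate: since $\mathbb{E}[\Psi_{u,\mathbf{j}_u}(\mathbf{X}_u)]=0$ by Corollary \ref{c1}, one has $\mathbb{E}[y_{N,m}(\mathbf{X})]=y_\emptyset=\mathbb{E}[y(\mathbf{X})]$, whence $\mathbb{E}[y(\mathbf{X})-y_{N,m}(\mathbf{X})]=0$. For a fixed $\Psi_{v,\mathbf{k}_v}$ with $|\mathbf{k}_v|=l\le m$, I would expand $\mathbb{E}[(y-y_{N,m})\Psi_{v,\mathbf{k}_v}]$ using the definitions \eqref{5.4a} and \eqref{5.4b} together with $\mathbb{E}[\Psi_{v,\mathbf{k}_v}]=0$, obtaining $I_{v,\mathbf{k}_v}-\sum_{u,\mathbf{j}_u}C_{u,\mathbf{j}_u}J_{v,\mathbf{k}_v;u,\mathbf{j}_u}$, where the sum ranges over all $\emptyset\ne u\subseteq\{1,\ldots,N\}$ with $|\mathbf{j}_u|\le m$. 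The decisive step is the degree-decoupling from Corollary \ref{c1}: since $J_{v,\mathbf{k}_v;u,\mathbf{j}_u}=0$ unless $|\mathbf{j}_u|=l$, every nonvanishing term already has degree $l\le m$ and is therefore retained by the truncation, so no contribution is discarded. What remains is precisely the degree-$l$ normal equation \eqref{5.11}, which the GPDD coefficients satisfy by construction; hence the inner product vanishes and $y(\mathbf{X})-y_{N,m}(\mathbf{X})$ is orthogonal to $\Pi_{N,m}^N$.

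For the convergence claim I note that $S$ ranges over the integers $\{1,\ldots,N\}$, so the limit $S\to N$ is attained at $S=N$; it therefore suffices to show $\lim_{m\to\infty}\mathbb{E}[\{y-y_{N,m}\}^2]=0$. Having identified $y_{N,m}$ as the orthogonal projection of $y$, the Pythagorean identity yields $\mathbb{E}[\{y-y_{N,m}\}^2]=\mathbb{E}[y^2]-\mathbb{E}[y_{N,m}^2]$. The mean-square convergence established in Theorem \ref{t1}, namely the Parseval-type identity $\lim_{m\to\infty}\mathbb{E}[y_{N,m}^2]=\mathbb{E}[y^2]$, then forces the right-hand side to zero, completing the argument.

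The main obstacle I anticipate is bookkeeping rather than conceptual: one must track carefully that truncation to total order $m$ discards only those coupled terms that the degree-decoupling of Corollary \ref{c1} already annihilates, so that the finite normal equations \eqref{5.11} coincide exactly with the orthogonality conditions defining the projection onto $\Pi_{N,m}^N$. Once this alignment between the truncated linear system and the projection conditions is secured, both the orthogonality of the residual and the mean-square convergence follow without further analytic difficulty.
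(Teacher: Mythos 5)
Your proof is correct, and the convergence half (Pythagoras plus the Parseval-type limit from Theorem \ref{t1}, with the observation that $S \to N$ is attained at $S = N$) is exactly what the paper does. The orthogonality half, however, takes a genuinely different route. The paper first represents the residual as the tail of the GPDD series, $y(\mathbf{X})-y_{N,m}(\mathbf{X}) = \sum_{\emptyset \ne u \subseteq \{1,\ldots,N\}} \sum_{m+1 \le |\mathbf{j}_u| < \infty} C_{u,\mathbf{j}_u}\Psi_{u,\mathbf{j}_u}(\mathbf{X}_u)$, and then kills every cross term against an arbitrary element of $\Pi_{N,m}^N$ by the degree-mismatch zero of Corollary \ref{c1} (tail degrees exceed $m$, test degrees do not). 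You instead never invoke the tail representation: you test the residual against each spanning element $\Psi_{v,\mathbf{k}_v}$ directly, reduce the inner product to $I_{v,\mathbf{k}_v} - \sum C_{u,\mathbf{j}_u} J_{v,\mathbf{k}_v;u,\mathbf{j}_u}$, and use degree-decoupling only to show that the truncation discards nothing at degree $l \le m$, so the vanishing of this expression is precisely the normal equation \eqref{5.11} defining the coefficients. Your version is more self-contained for this part: it avoids relying on the $L^2$ identity between $y - y_{N,m}$ and an infinite series (and the attendant interchange of expectation with an infinite sum), working only with the finite sum $y_{N,m}$ and the defining linear system \eqref{5.3}. What the paper's version buys in exchange is brevity and a cleaner geometric picture — the residual visibly lives in the closed span of the polynomials of degree above $m$, which makes the orthogonality immediate. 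One point your bookkeeping correctly, if implicitly, exploits and which is worth stating: the alignment between the truncated system and the full degree-$l$ system \eqref{5.11} holds only because the orthogonality claim is made for $S = N$; for $S < N$ the terms with $|u| > S$ would be missing and the argument would fail, which is exactly why the proposition asserts orthogonality only for $y_{N,m}$.
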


\begin{proof}
Let
\begin{equation}
\bar{y}_{N,m}(\mathbf{X}) :=
\bar{y}_\emptyset +
\sum_{\emptyset \ne v \subseteq \{1,\ldots,N\}}
\sum_{\substack{\mathbf{k}_v \in \mathbb{N}^{|v|} \\ |v| \le |\mathbf{k}_v| \le m}}
\bar{C}_{v,\mathbf{k}_v} \Psi_{v,\mathbf{k}_v}(\mathbf{X}_v),
\end{equation}
with arbitrary expansion coefficients $\bar{y}_\emptyset$ and $\bar{C}_{v,\mathbf{k}_v}$, be any element of the subspace $\Pi_{N,m}^N$ of $L^2(\Omega,\mathcal{F},\mathbb{P})$ described by \eqref{5.16} for $S=N$.  Then
\begin{equation}
\begin{array}{rcl}
&   &
{\displaystyle
\mathbb{E} \left[ \{y(\mathbf{X})-y_{N,m}(\mathbf{X})\}\bar{y}_{N,m}(\mathbf{X}) \right]
}  \\
& = &
\displaystyle
\mathbb{E} \Biggl[ \Biggl\{
\sum_{\emptyset \ne u \subseteq \{1,\ldots,N\} }
\sum_{\substack{\mathbf{j}_u \in \mathbb{N}^{|u|} \\ m+1 \le |\mathbf{j}_u| < \infty}}
\!\!\!\!C_{u,\mathbf{j}_u} \Psi_{u,\mathbf{j}_u}(\mathbf{X}_u)  \Biggr\}
\Biggl\{
\bar{y}_\emptyset +
\sum_{\emptyset \ne v \subseteq \{1,\ldots,N\}  }
\sum_{\substack{\mathbf{k}_v \in \mathbb{N}^{|v|} \\ |v| \le |\mathbf{k}_v| \le m}}
\!\!\bar{C}_{v,\mathbf{k}_v} \Psi_{v,\mathbf{k}_v}(\mathbf{X}_v)
\Biggr\}
\Biggr]
\\
&  = &
0,
\end{array}
\end{equation}
where the last line follows from a \emph{zero} result of Corollary \ref{c1}, proving the first part of the proposition.  For the latter part, the Pythagoras theorem yields
\begin{equation}
\mathbb{E}[\{y(\mathbf{X})-y_{N,m}(\mathbf{X})\}^2] + \mathbb{E}[y_{N,m}^2(\mathbf{X})] =
\mathbb{E}[y^2(\mathbf{X})].
\end{equation}
Then
\begin{equation}
\begin{array}{rcl}
\displaystyle
\lim_{S \to N,m \to \infty} \mathbb{E}\left[\{y(\mathbf{X})-y_{S,m}(\mathbf{X})\}^2\right]
&  =  &
\displaystyle
\lim_{m \to \infty} \mathbb{E}\left[\{y(\mathbf{X})-y_{N,m}(\mathbf{X})\}^2\right] \\
&  =  &
\displaystyle
\lim_{m \to \infty} \left( \mathbb{E}\left[y^2(\mathbf{X})\right]-
\mathbb{E}\left[y_{N,m}^2(\mathbf{X})\right] \right)\\
&  =  &
\displaystyle
\mathbb{E}\left[y^2(\mathbf{X})\right] -
\lim_{S \to N,m \to \infty} \mathbb{E}\left[y_{S,m}^2(\mathbf{X})\right] \\
&  =  &
0,
\end{array}
\end{equation}
where the second line uses the result of the Pythagoras theorem; and the equality to \emph{zero} in the last line stems from Theorem \ref{t1}, which says that $\mathbb{E}[y_{S,m}^2(\mathbf{X})] \to \mathbb{E}[y^2(\mathbf{X})]$ as $S \to N$ and $m \to \infty$.
\end{proof}

The second part of Proposition \ref{p5} entails $L^2$ convergence, which is the same as the mean-square convergence described in Theorem \ref{t1}.  However, an alternative route is chosen for the proof of the proposition.

\begin{proposition}
Let $\Pi_{S,m}^N$ be as defined in Proposition \ref{p5}.  Then the $S$-variate, $m$th-order GPDD approximation $y_{S,m}(\mathbf{X})$ of $y(\mathbf{X}) \in L^2(\Omega,\mathcal{F},\mathbb{P})$ is the best approximation in the sense that
\begin{equation}
\displaystyle
\mathbb{E}\left[ \{y(\mathbf{X})-y_{S,m}(\mathbf{X}) \}^2 \right] =
\inf_{\bar{y}_{S,m} \in \Pi_{S,m}^N}
\mathbb{E}\left[ \{ y(\mathbf{X})-\bar{y}_{S,m}(\mathbf{X}) \}^2 \right].
\label{5.15e}
\end{equation}
\label{p6}
\end{proposition}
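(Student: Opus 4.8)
The plan is to recognize Proposition \ref{p6} as the standard statement that the orthogonal projection onto a closed subspace realizes the best approximation in a Hilbert space, and to prove it via the Pythagoras identity once the right orthogonality of the residual is in hand. First I would observe that $\Pi_{S,m}^N$ defined in \eqref{5.16} is a finite-dimensional, hence closed, subspace of $L^2(\Omega,\mathcal{F},\mathbb{P})$, spanned by the constant together with the retained orthogonal polynomials $\Psi_{u,\mathbf{j}_u}(\mathbf{X}_u)$ with $1 \le |u| \le S$ and $|u| \le |\mathbf{j}_u| \le m$, and that $y_{S,m}(\mathbf{X})$ in \eqref{5.14} is the element of $\Pi_{S,m}^N$ whose coefficients $y_\emptyset$ and $C_{u,\mathbf{j}_u}$ stationarize the truncated second moment $\mathbb{E}[\{y-\bar y_{S,m}\}^2]$, i.e. they solve the associated, degree-decoupled normal equations.

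The decisive step is to show that the residual $y(\mathbf{X})-y_{S,m}(\mathbf{X})$ is orthogonal to every element of $\Pi_{S,m}^N$. By linearity it suffices to test against the constant $1$ and against each retained basis polynomial $\Psi_{u,\mathbf{j}_u}$. Differentiating the truncated error with respect to $y_\emptyset$ and to each $C_{u,\mathbf{j}_u}$ and setting the derivatives to zero, exactly the computation carried out in \eqref{5.7} and \eqref{5.8} in the proof of Theorem \ref{t1} but now restricted to the retained index set, yields $\mathbb{E}[y-y_{S,m}]=0$ and $\mathbb{E}[\{y-y_{S,m}\}\Psi_{u,\mathbf{j}_u}]=0$ for every retained $(u,\mathbf{j}_u)$. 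These are precisely the defining normal equations, so the orthogonality holds by construction; using the second-moment formulas of Corollary \ref{c1} to evaluate $J_{u,\mathbf{j}_u;v,\mathbf{k}_v}$ shows the system decouples degree by degree, which is what guarantees a unique stationary point. Hence $y-y_{S,m}\perp \Pi_{S,m}^N$. With this in place I would finish by Pythagoras: for arbitrary $\bar y_{S,m}\in\Pi_{S,m}^N$, write $y-\bar y_{S,m}=(y-y_{S,m})+(y_{S,m}-\bar y_{S,m})$ and note $y_{S,m}-\bar y_{S,m}\in\Pi_{S,m}^N$, so the cross term vanishes and $\mathbb{E}[\{y-\bar y_{S,m}\}^2]=\mathbb{E}[\{y-y_{S,m}\}^2]+\mathbb{E}[\{y_{S,m}-\bar y_{S,m}\}^2]\ge\mathbb{E}[\{y-y_{S,m}\}^2]$, with equality iff $\bar y_{S,m}=y_{S,m}$; taking the infimum gives \eqref{5.15e}.

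The main obstacle is the residual orthogonality for a genuinely $S$-variate ($S<N$) truncation. Unlike the $S=N$ case handled in Proposition \ref{p5}, where the residual consists only of terms of total degree exceeding $m$ and orthogonality is immediate from the degree-mismatch clause of Corollary \ref{c1}, the residual here also contains terms with $|u|>S$ but degree at most $m$, and for dependent variables the cross-moments $J_{u,\mathbf{j}_u;v,\mathbf{k}_v}$ between polynomials in different variable groups of the \emph{same} degree need not vanish. Consequently the orthogonality cannot be read off from Corollary \ref{c1} alone; it must come from the fact that the retained coefficients $C_{u,\mathbf{j}_u}$ solve the reduced normal equations for every retained degree. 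I would therefore take care to state explicitly that the truncated coefficients are fixed by this reduced Gram system, whose positive-definiteness (noted after \eqref{5.11}) ensures their existence and uniqueness, so that $y_{S,m}$ is \emph{genuinely} the orthogonal projection of $y$ onto $\Pi_{S,m}^N$ and the best-approximation property of \eqref{5.15e} follows.
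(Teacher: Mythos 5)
Your strategy is the paper's strategy: identify the truncated coefficients as the stationary point of the quadratic error $\mathbb{E}[\{y(\mathbf{X})-\bar{y}_{S,m}(\mathbf{X})\}^2]$ via normal equations, then conclude optimality (you do this with an explicit Pythagoras step; the paper leaves that step implicit in the convexity of the quadratic). You also put your finger on exactly the right difficulty: for dependent inputs and $S<N$, polynomials of the same total degree in different variable groups need not be mutually orthogonal, so orthogonality of the residual to $\Pi_{S,m}^N$ cannot be read off from Corollary \ref{c1}.

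But your resolution of that difficulty --- declaring that ``the truncated coefficients are fixed by this reduced Gram system'' --- silently replaces the paper's object by a different one. In \eqref{5.14}, $y_{S,m}(\mathbf{X})$ is defined with the coefficients $C_{u,\mathbf{j}_u}$ of the \emph{full} system \eqref{5.3}; nothing is recomputed after truncation. For a retained degree $l$ with $S<l\le m$, the degree-$l$ block \eqref{5.11} of that system couples the retained unknowns ($|v|\le S$) to the discarded ones ($S<|v|\le\min(N,l)$) precisely through the nonvanishing same-degree cross-moments $J_{u,\mathbf{j}_u;v,\mathbf{k}_v}$, so the restriction of the full-system solution does not, in general, satisfy the reduced normal equations. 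Hence, for the paper's $y_{S,m}$, your claim that $\mathbb{E}[\{y(\mathbf{X})-y_{S,m}(\mathbf{X})\}\Psi_{u,\mathbf{j}_u}(\mathbf{X}_u)]=0$ for every retained $(u,\mathbf{j}_u)$ does not hold ``by construction,'' and the Pythagoras step never gets off the ground; what you have actually proved is the tautology that the orthogonal projection onto $\Pi_{S,m}^N$ is the best approximant in $\Pi_{S,m}^N$. You should know that the paper's own proof stumbles at the same spot: it asserts that differentiating the truncated error forces $\bar{C}_{u,\mathbf{j}_u}=C_{u,\mathbf{j}_u}$, i.e., that the stationary point of the reduced problem sits at the full-system coefficients, and that identification is valid only when $S=N$ (the case actually covered by the orthogonality statement of Proposition \ref{p5}) or when the inputs are independent, so that Corollary \ref{c2} annihilates the cross-moments and the Gram matrix becomes the identity. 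So your diagnosis of the obstacle was correct, but neither your fix nor the paper's argument removes it for dependent inputs with $S<N$.
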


\begin{proof}
Let
\begin{equation}
\bar{y}_{S,m}(\mathbf{X}) :=
\bar{y}_\emptyset +
\sum_{\substack{\emptyset \ne u \subseteq \{1,\ldots,N\}\\ 1 \le |u| \le S} }
\sum_{\substack{\mathbf{j}_u \in \mathbb{N}^{|u|} \\ |u| \le |\mathbf{j}_u| \le m}}
\bar{C}_{v,\mathbf{k}_v} \Psi_{v,\mathbf{k}_v}(\mathbf{X}_v),
\label{5.15e}
\end{equation}
with arbitrary expansion coefficients $\bar{y}_\emptyset$ and $\bar{C}_{u,\mathbf{j}_u}$, be any element of the subspace $\Pi_{S,m}^N \subseteq L^2(\Omega,\mathcal{F},\mathbb{P})$ defined in \eqref{5.16}.  To minimize $\mathbb{E}[ \{y(\mathbf{X})-\bar{y}_{S,m}(\mathbf{X})\}^2]$, the derivatives with respect to the coefficients must be \emph{zero}, that is,
\[
\displaystyle
\frac{\partial}{\partial \bar{y}_\emptyset}
\mathbb{E}\left[ \{y(\mathbf{X})-\bar{y}_{S,m}(\mathbf{X}) \}^2 \right]=
\frac{\partial}{\partial \bar{C}_{u,\mathbf{j}_u}}
\mathbb{E}\left[ \{y(\mathbf{X})-\bar{y}_{S,m}(\mathbf{X}) \}^2 \right]=0.
\]
From the proof of Theorem \ref{t1}, for instance, \eqref{5.7} and \eqref{5.8} and the following text, the derivatives are \emph{zero} only when $\bar{y}_\emptyset=y_\emptyset$ and $\bar{C}_{u,\mathbf{j}_u}=C_{u,\mathbf{j}_u}$, where $y_\emptyset$ and $C_{u,\mathbf{j}_u}$ are the Fourier coefficients of GPDD in \eqref{5.2} and \eqref{5.3}, respectively.
\end{proof}

\subsection{Infinitely many input variables}
In many fields, such as uncertainty quantification, information theory, and stochastic process, functions depending on a countable sequence $\{X_i\}_{i \in \mathbb{N}}$ of input random variables need to be considered \cite{griebel16}.  Under certain assumptions, GPDD is still applicable as in the case of finitely many random variables, as demonstrated by the following proposition.

\begin{proposition}
Let $\{X_i\}_{i \in \mathbb{N}}$ be a countable sequence of input random variables defined on the probability space $(\Omega, \mathcal{F}_\infty, \mathbb{P})$, where $\mathcal{F}_\infty:=\sigma(\{ X_i\}_{i \in \mathbb{N}})$ is the associated $\sigma$-algebra generated.  If the sequence $\{X_i\}_{i \in \mathbb{N}}$ satisfies Assumption \ref{a1}, then the GPDD of $y(\{X_i\}_{i \in \mathbb{N}}) \in L^2(\Omega, \mathcal{F}_\infty, \mathbb{P})$, where $y:\mathbb{A}^{\mathbb{N}} \to \mathbb{R}$, converges to $y(\{X_i\}_{i \in \mathbb{N}})$ in mean-square.  Moreover, the GPDD converges in probability and in distribution.
\label{p5b}
\end{proposition}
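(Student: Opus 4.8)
The plan is to reduce the infinite-dimensional statement to the finite-dimensional convergence already established in Theorem~\ref{t1} by exhausting $\mathcal{F}_\infty$ through the natural filtration generated by the variables. First I would introduce the increasing sequence of $\sigma$-algebras $\mathcal{F}_N := \sigma(X_1,\ldots,X_N)$, $N \in \mathbb{N}$, for which $\mathcal{F}_N \subseteq \mathcal{F}_{N+1}$ and $\mathcal{F}_\infty = \sigma\bigl( \bigcup_{N \in \mathbb{N}} \mathcal{F}_N \bigr)$. Because the sequence $\{X_i\}_{i \in \mathbb{N}}$ is assumed to satisfy Assumption~\ref{a1}, every finite subvector $(X_1,\ldots,X_N)^T$ inherits Items (1)--(4), so Theorem~\ref{t1} applies verbatim to each $N$-variable problem.

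The key analytic step is a denseness argument in two stages. First I would invoke the standard fact that, for $y \in L^2(\Omega,\mathcal{F}_\infty,\mathbb{P})$, the conditional expectations $y_N := \mathbb{E}[ y \mid \mathcal{F}_N ]$ form an $L^2$-bounded martingale converging to $y$ in mean-square (the martingale convergence theorem), equivalently that $\bigcup_{N} L^2(\Omega,\mathcal{F}_N,\mathbb{P})$ is dense in $L^2(\Omega,\mathcal{F}_\infty,\mathbb{P})$. Second, for each fixed $N$, Proposition~\ref{p2} shows that the polynomial space $\Pi^N$ in $X_1,\ldots,X_N$ is dense in $L^2(\Omega,\mathcal{F}_N,\mathbb{P})$. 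Composing the two, given $y$ and $\epsilon > 0$ I would choose $N$ with $\| y - y_N \|_{L^2} < \epsilon/2$ and then a finite-variable polynomial $p$ with $\| y_N - p \|_{L^2} < \epsilon/2$, so that the space of all polynomials in finitely many of the $X_i$ is dense in $L^2(\Omega,\mathcal{F}_\infty,\mathbb{P})$.

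With this denseness in hand, the remainder mirrors the proof of Theorem~\ref{t1}. The hierarchical collection $\boldsymbol{1} \oplus \bigcup_{u}\bigoplus_{l} \text{span}\{\Psi_{u,\mathbf{j}_u}\}$, now ranging over all finite subsets $u \subset \mathbb{N}$, spans this dense polynomial space, since each $\Pi^N$ decomposes as in \eqref{4.7} and $\Pi^\infty = \bigcup_N \Pi^N$; hence the collection constitutes a complete system in $L^2(\Omega,\mathcal{F}_\infty,\mathbb{P})$. The best-approximation property and the decoupled-by-degree coefficient equations of Theorem~\ref{t1} then hold for every finite truncation $y_{N,S,m}$ unchanged. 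Consequently the truncated GPDD converges to $y$ in mean-square, and mean-square convergence implies convergence in probability, which in turn implies convergence in distribution, exactly as in part~(2) of Theorem~\ref{t1}.

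The main obstacle I anticipate is the denseness step rather than the algebra of coefficients. Two points require care: that Item~(3) of Assumption~\ref{a1} guarantees M-determinacy of each finite-dimensional marginal, so that Proposition~\ref{p2} genuinely applies at every level $N$ (without which polynomials need not be complete even for a fixed $N$); and that the convergence now involves a triple limit in the number of variables $N$, the interaction degree $S$, and the polynomial order $m$. I would handle the latter with a diagonal sequence $N_k, S_k, m_k \to \infty$, using the martingale approximation to control the tail in $N$ and Theorem~\ref{t1} to control the truncation error for each fixed $N$; combining the two estimates by the triangle inequality yields mean-square convergence along the diagonal, and the monotonicity of the orthogonal projections secures it for the full limit.
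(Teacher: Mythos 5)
Your proposal is correct, and its skeleton matches the paper's: establish that polynomials in finitely many of the $X_i$ are dense in $L^2(\Omega,\mathcal{F}_\infty,\mathbb{P})$, identify that polynomial space with the hierarchical union-sum collection $\boldsymbol{1} \oplus \bigcup_{u}\bigoplus_{l}\text{span}\{\Psi_{u,\mathbf{j}_u}\}$ via \eqref{4.7} and $\Pi^\infty = \bigcup_N \Pi^N$, and then pass from mean-square convergence to convergence in probability and in distribution. The difference lies in how the crucial denseness of $\Pi^\infty$ in $L^2(\Omega,\mathcal{F}_\infty,\mathbb{P})$ is obtained. The paper cites it as a black box, namely Theorem 3.8 of Ernst \emph{et al.}, whereas you reprove it from scratch: the conditional expectations $y_N := \mathbb{E}[y \mid \mathcal{F}_N]$ form an $L^2$-bounded martingale converging to $y$ in mean-square (L\'evy's upward theorem), so $\bigcup_N L^2(\Omega,\mathcal{F}_N,\mathbb{P})$ is dense in $L^2(\Omega,\mathcal{F}_\infty,\mathbb{P})$, and composing this with the finite-dimensional density of Proposition \ref{p2} (which requires, as you correctly flag, that each finite marginal inherits Items (1)--(3) of Assumption \ref{a1}, so determinacy is not lost at any level $N$) gives the result by the triangle inequality. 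What your route buys is self-containedness and transparency about where the filtration structure enters; what the paper's route buys is brevity. You also go beyond the paper in one respect: you notice that the convergence statement involves a limit in three indices ($N$, $S$, $m$) and propose a diagonal sequence controlled by monotonicity of the orthogonal projections, a point the paper's proof passes over silently by asserting that denseness of the complete system immediately yields mean-square convergence. Both arguments are sound; yours is the more careful of the two on this last point.
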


\begin{proof}
According to Proposition \ref{p2}, $\Pi^N$ is dense in $L^2(\mathbb{A}^N,\mathcal{B}^N,f_{\mathbf{X}}d\mathbf{x})$ and hence in $L^2(\Omega, \mathcal{F}_N, \mathbb{P})$ for every $N \in \mathbb{N}$, where $\mathcal{F}_N:=\sigma(\{ X_i\}_{i=1}^N)$ is the associated $\sigma$-algebra generated by $\{ X_i\}_{i=1}^N$.  Here, with a certain abuse of notation, $\Pi^N$ is used as a set of polynomial functions of both real variables $\mathbf{x}$ and random variables $\mathbf{X}$.  Now, apply Theorem 3.8 of Ernst \textit{et al}. \cite{ernst12}, which says that if $\Pi^N$ is dense in $L^2(\Omega, \mathcal{F}_N, \mathbb{P})$ for every $N \in \mathbb{N}$, then
\[
\Pi^\infty:= \bigcup_{N=1}^\infty \Pi^N,
\]
a subspace of $L^2(\Omega, \mathcal{F}_\infty, \mathbb{P})$, is also dense in $L^2(\Omega, \mathcal{F}_\infty, \mathbb{P})$.  But, using \eqref{5.5},
\[
\begin{array}{rcl}
\Pi^\infty & = &
\displaystyle
\bigcup_{N=1}^\infty
\boldsymbol{1} \oplus  \bigcup_{\emptyset \ne u \subseteq \{1,\ldots,N\}}~
\bigoplus_{l=|u|}^\infty
\text{span}\{ \Psi_{u,\mathbf{j}_u}: |\mathbf{j}_u|=l, \mathbf{j}_u \in \mathbb{N}^{|u|} \} \\
           & = &
\displaystyle
\boldsymbol{1} \oplus  \bigcup_{\emptyset \ne u \subseteq \mathbb{N}}~
\bigoplus_{l=|u|}^\infty
\text{span}\{ \Psi_{u,\mathbf{j}_u}: |\mathbf{j}_u|=l, \mathbf{j}_u \in \mathbb{N}^{|u|} \},
\end{array}
\]
demonstrating that the set of polynomials from the union-sum in the last line is dense in $L^2(\Omega, \mathcal{F}_\infty, \mathbb{P})$.  Therefore, the GPDD of $y(\{X_i\}_{i \in \mathbb{N}}) \in L^2(\Omega, \mathcal{F}_\infty, \mathbb{P})$ converges to $y(\{X_i\}_{i \in \mathbb{N}})$ in mean-square.  Since the mean-square convergence is stronger than the convergence in probability or in distribution, the latter modes of convergence follow readily.
\end{proof}

\subsection{Comparison with generalized polynomial chaos expansion}
While the paper focuses on a dimension-wise Fourier-like series in orthogonal polynomials, a comparison with competing expansions entailing orthogonal polynomials without dimensional hierarchy should be intriguing.  One such expansion is the generalized polynomial chaos expansion (PCE) or GPCE, developed recently for dependent input random variables \cite{rahman18b}.  It is derived from a degree-wise splitting of the polynomial spaces, so that any square-integrable output random variable $y(\mathbf{X})$ can be expanded as \cite{rahman18b}
\begin{equation}
y(\mathbf{X}) \sim
\sum_{\mathbf{j} \in \mathbb{N}_0^{N}}
C_{\mathbf{j}} \Psi_{\mathbf{j}}(\mathbf{X}),
\label{5.16}
\end{equation}
where $\{\Psi_{\mathbf{j}}(\mathbf{X}): \mathbf{j} \in \mathbb{N}_0^{N}\}$ is an infinite set of measure-consistent multivariate orthonormal polynomials in $\mathbf{X}$ and $C_{\mathbf{j}} \in \mathbb{R}$, $\mathbf{j} \in \mathbb{N}_0^{N}$, are the Fourier coefficients of GPCE. Like GPDD, the GPCE of $y(\mathbf{X}) \in L^2(\Omega, \mathcal{F}, \mathbb{P})$ under Assumption \ref{a1} also converges to $y(\mathbf{X})$ in mean-square, in probability, and in distribution \cite{rahman18b}.  When truncated according to the total degree index set, the $p$th-order GPCE approximation of $y(\mathbf{X})$, where $0 \le p < \infty$, reads
\begin{equation}
y_p(\mathbf{X}) =
\sum_{\substack{\mathbf{j} \in \mathbb{N}_0^{N} \\ 0 \le |\mathbf{j}| \le p}}
C_{\mathbf{j}} \Psi_{\mathbf{j}}(\mathbf{X}).
\label{5.17}
\end{equation}
Clearly, the two infinite series from GPDD and GPCE, defined by \eqref{5.1} and \eqref{5.16}, respectively, are the same or equivalent with respect to their identical second-moment properties.  However, GPDD and GPCE when truncated are not. In fact, two notable observations jump out. First, the terms in the GPCE approximation in \eqref{5.17} are organized strictly with respect to the order of polynomials. In contrast, the GPDD approximation in \eqref{5.14} is structured with respect to both the degree of interaction among random variables and the order of polynomials. Therefore, significant differences may exist regarding the accuracy, efficiency, and convergence properties of their truncated sums.  Second, if a stochastic response is highly nonlinear, but contains rapidly diminishing interactive effects of input random variables -- a premise supported by real-world applications -- the GPDD approximation is expected to be more effective than the GPCE approximation. This is because the lower-variate terms of the GPDD approximation can be just as nonlinear by selecting appropriate values of $m$ in \eqref{5.14}. In contrast, many more terms and Fourier coefficients are required to be included in the GPCE approximation to capture such high nonlinearity.  To better explain this point, a numerical example discussing error analysis of both approximations is illustrated next.

\section{A numerical example}
Consider a polynomial
\begin{equation}
\displaystyle
y(\mathbf{X}) = 10 \left( X_1^6 + X_2^6 + X_3^6 \right) +
\frac{1}{10} \left( X_1 X_2 + X_1 X_3 + X_2 X_3 \right) +
\frac{1}{1000} X_1^2 X_2^2 X_3^2
\label{6.1}
\end{equation}
in three real-valued, dependent random variables $(X_1,X_2,X_3)$, which follow the Dirichlet probability density function
\[
f_{X_{1}X_{2}X_{3}}(x_{1},x_{2},x_{3})=
\begin{cases}
{\displaystyle \frac{{\displaystyle \Gamma\left({\displaystyle \sum_{i=1}^{4}\kappa_{i}+{\displaystyle 2}}\right)}}{{\displaystyle \prod_{i=1}^{4}\Gamma\left(\kappa_{i}+\frac{1}{2}\right)}}{\displaystyle {\displaystyle \left({\displaystyle {\displaystyle \prod_{i=1}^{3}x_{i}^{\kappa_{i}-\frac{1}{2}}}}\right)\left(1-x_1 - x_2 - x_3 \right)^{\kappa_{4}-\frac{1}{2}}},}} & \mathbf{x}=(x_{1},x_{2},x_{3})^{T}\in\mathbb{T}^{3},\\
0, & \text{otherwise},
\end{cases}
\]
on the standard tetrahedron $\mathbb{T}^3:=\{(x_1,x_2,x_3): 0\le x_1,x_2,x_3; x_1+x_2+x_3 \le 1 \}$, where
$\kappa_{1}=\kappa_{2}=\kappa_{3}=\kappa_{4}=1$. The objective of this example is to evaluate the approximation quality of GPDD approximations in terms of the second-moment statistics of $y(\mathbf{X})$ and contrast the GPDD results with those obtained from the GPCE approximations.

Under Assumption 1, bases comprising multivariate orthogonal polynomials consistent with the Dirichlet probability density function exist. One such basis, obtained using a Rodrigues-type formula \cite{dunkl14} and subsequent scaling, leads to the standardized version $\{\Psi_{u,\mathbf{j}_u}(\mathbf{x}_u)\}$, as described in \eqref{4.5b}.  More explicitly, Table \ref{table1} presents first-, second-, and third-order (-degree) orthogonal polynomials in $\mathbf{x}_u$, $1 \le |u| \le 3$, obtained for the Dirichlet density function.

\begin{table}[htbp!]
\vspace{-0.1cm}
\caption{A few orthogonal polynomials consistent with the Dirichlet density function of Example 1.$^{(\text{a})}$}
\begin{centering}
\begin{tabular}{c}
\hline
\noalign{\vskip0.05cm}
\tabularnewline
{\scriptsize{}}%
\begin{tabular}{l}
{\scriptsize{}$\Psi_{\{i\}1}={\displaystyle \sqrt{\frac{7}{3}}-4\sqrt{\frac{7}{3}}x_{i}},$}\tabularnewline
{\scriptsize{}$\Psi_{\{i\}2}={\displaystyle \frac{224x_{i}^{2}}{\sqrt{55}}-28\sqrt{\frac{5}{11}}x_{i}+3\sqrt{\frac{5}{11}},}$}\tabularnewline
{\scriptsize{}$\Psi_{\{i\}3}={\displaystyle -128\sqrt{\frac{15}{13}}x_{i}^{3}+672\sqrt{\frac{3}{65}}x_{i}^{2}-112\sqrt{\frac{5}{39}}x_{i}+7\sqrt{\frac{5}{39}}},$}\tabularnewline
{\scriptsize{}$\Psi_{\{i_{1},i_{2}\}11}={\displaystyle 11\sqrt{\frac{42}{19}}x_{i_1}^{2}+2\sqrt{798}x_{i_2}x_{i_1}-14\sqrt{\frac{42}{19}}x_{i_1}+11\sqrt{\frac{42}{19}}x_{i_2}^{2}-14\sqrt{\frac{42}{19}}x_{i_2}+3\sqrt{\frac{42}{19}}},$}\tabularnewline
{\scriptsize{}$\Psi_{\{i_{1},i_{2}\}12}=\begin{array}[t]{l}
{\displaystyle -6\sqrt{\frac{1001}{37}}x_{i_{1}}^{3}-614\sqrt{\frac{77}{481}}x_{i_{2}}x_{i_{1}}^{2}+174\sqrt{\frac{77}{481}}x_{i_{1}}^{2}-2\sqrt{37037}x_{i_{2}}^{2}x_{i_{1}}+788\sqrt{\frac{77}{481}}x_{i_{2}}x_{i_{1}}}\\
{\displaystyle -114\sqrt{\frac{77}{481}}x_{i_{1}}-18\sqrt{\frac{1001}{37}}x_{i_{2}}^{3}+30\sqrt{\frac{1001}{37}}x_{i_{2}}^{2}-174\sqrt{\frac{77}{481}}x_{i_{2}}+18\sqrt{\frac{77}{481}},}
\end{array}$}\tabularnewline
{\scriptsize{}$\Psi_{\{i_{1},i_{2}\}21}=\begin{array}[t]{l}
{\displaystyle -18\sqrt{\frac{1001}{37}}x_{i_{1}}^{3}-2\sqrt{37037}x_{i_{2}}x_{i_{1}}^{2}+30\sqrt{\frac{1001}{37}}x_{i_{1}}^{2}-614\sqrt{\frac{77}{481}}x_{i_{2}}^{2}x_{i_{1}}+788\sqrt{\frac{77}{481}}x_{i_{2}}x_{i_{1}}}\\
{\displaystyle -174\sqrt{\frac{77}{481}}x_{i_{1}}-6\sqrt{\frac{1001}{37}}x_{i_{2}}^{3}+174\sqrt{\frac{77}{481}}x_{i_{2}}^{2}-114\sqrt{\frac{77}{481}}x_{i_{2}}+18\sqrt{\frac{77}{481}},}
\end{array}$}\tabularnewline
{\scriptsize{}$\Psi_{\{1,2,3\}111}=\begin{array}[t]{l}
{\displaystyle -12\sqrt{55}x_{1}^{3}-50\sqrt{55}x_{2}x_{1}^{2}-50\sqrt{55}x_{3}x_{1}^{2}+138\sqrt{\frac{11}{5}}x_{1}^{2}-50\sqrt{55}x_{2}^{2}x_{1}-50\sqrt{55}x_{3}^{2}x_{1}}\\
{\displaystyle +346\sqrt{\frac{11}{5}}x_{2}x_{1}}-{\displaystyle 128\sqrt{55}x_{2}x_{3}x_{1}+346\sqrt{\frac{11}{5}}x_{3}x_{1}-96\sqrt{\frac{11}{5}}x_{1}-12\sqrt{55}x_{2}^{3}}\\
{\displaystyle {\displaystyle -12\sqrt{55}x_{3}^{3}}+138\sqrt{\frac{11}{5}}x_{2}^{2}}-50\sqrt{55}x_{2}x_{3}^{2}+{\displaystyle 138\sqrt{\frac{11}{5}}x_{3}^{2}-96\sqrt{\frac{11}{5}}x_{2}-50\sqrt{55}x_{2}^{2}x_{3}}\\
{\displaystyle +346\sqrt{\frac{11}{5}}x_{2}x_{3}-96\sqrt{\frac{11}{5}}x_{3}+18\sqrt{\frac{11}{5}}.}
\end{array}$}\tabularnewline
\end{tabular}\tabularnewline
\noalign{\vskip0.05cm}
\hline
\end{tabular}
\par\end{centering}
{\footnotesize ~~~~~~~~~~~~~~(a) Here, $i=1,2,3$; $i_1,i_2 = 1,2,3$, $i_2>i_1$.}
\label{table1}
\end{table}

Define two relative errors
\begin{equation}
e_{S,m} :=
\displaystyle
\frac{\left|\text{var}[y(\mathbf{X})] - \text{var}[y_{S,m}(\mathbf{X})]\right|}
{\text{var}[y(\mathbf{X})]}
~~\text{and}~~
e_{p} :=
\displaystyle
\frac{\left|\text{var}[y(\mathbf{X})] - \text{var}[y_{p}(\mathbf{X})]\right|}
{\text{var}[y(\mathbf{X})]}
\label{6.2}
\end{equation}
in the variances, committed by the $S$-variate, $m$th-order GPDD approximation $y_{S,m}(\mathbf{X})$ and the $p$th-order GPCE approximation $y_{p}(\mathbf{X})$, respectively, of $y(\mathbf{X})$.  Here, the exact variance $\text{var}[y(\mathbf{X})]$ and the GPDD variance $\text{var}[y_{S,m}(\mathbf{X})]$, given $S$ and $m$, were determined analytically from their definitions, which is possible as (1) $y$ and $y_{S,m}$ are both polynomials and (2) expectations of monomials $\{ \mathbf{X}^{\mathbf{j}}, 0 \le |\mathbf{j}| < \infty \}$ for $\mathbf{X}$ following a Dirichlet distribution are known analytically. In contrast, the GPCE variance, given $p$, was calculated using the analytical formula from a prior work \cite{rahman18b}.  Therefore, all errors were calculated exactly.

Table \ref{table2} presents the errors $e_{S,m}$ and $e_{p}$, obtained using various combinations of the truncations parameters of GPDD and GPCE: $S=1,2$, $m=1,2,3,4,5$, and $p=1,2,3,4,5$.  The two truncations with respect to the degree of interaction $S=1$ and $S=2$ represent the univariate GPDD and bivariate GPDD approximations, respectively.  According to Table 1, the GPDD approximation errors drop with respect to $S$ and $m$ as expected. With the exception of $m=5$, the errors from the univariate and bivariate GPDD approximations are nearly identical.  Moreover, the errors from the two GPDD approximations are the same or very close to the errors from the respective GPCE approximations.  This is because the chosen function $y$, albeit it is highly nonlinear with respect to $\mathbf{X}$, is endowed with little interactions among input variables.  From the comparisons of computational efforts, measured in terms of the numbers of expansion coefficients also listed in Table \ref{table2}, both GPDD approximations are more efficient than the GPCE approximations for the same expansion order.  For instance, the univariate, fifth-order GPDD approximation ($S=1$, $m=5$) achieves a relative error of $3.31864\times 10^{-5}$ employing only 16 expansion coefficients. In contrast, to match the same-order error, the fifth-order GPCE approximation ($p=5$) is needed, committing a relative error of $1.69589\times 10^{-5}$ at the cost of 56 expansion coefficients.  Therefore, the univariate GPDD approximation is substantially more economical than the GPCE approximation for a similar accuracy.  The bivariate, fifth-order GPDD approximation produces practically the same result of the fifth-order GPCE approximation, but still upholding some computational advantage over the latter. However, the gain in efficiency from the bivariate GPDD approximation is much less than that from the univariate GPDD approximation.  This is expected due to the added computational expense to include, in addition to the main effects, all two-variable interaction effects, in the bivariate approximation.  Nonetheless, when the main effects of the input variables on $y$ are dominant over their interactive effects, as is the case in this example, the GPDD approximation is expected to be more effective than the GPCE approximation.

\begin{table}[htbp!]
\vspace{-0.1cm}
\caption{Relative errors in the variances of $y(\mathbf{X})$ calculated by GPDD and GPCE approximations in Example 1}
\begin{centering}
\begin{tabular}{cccccccccc}
\hline
 &  & \multicolumn{2}{c}{{\small{}Univariate GPDD}} &  & \multicolumn{2}{c}{{\small{}Bivariate GPDD}} &  & \multicolumn{2}{c}{{\small{}GPCE}}\tabularnewline
\cline{3-4} \cline{6-7} \cline{9-10}
{\small{}$m$ or $p$} &  & {\small{}$e_{1,m}$} & {\small{}$L_{1,m}$} &  & {\small{}$e_{2,m}$} & {\small{}$L_{2,m}$} &  & {\small{}$e_{p}$} & {\small{}$L_{p}$}\tabularnewline
\hline
{\small{}1} &  & {\small{}0.856363} & {\small{}4} &  &  &  &  & {\small{}0.856363} & {\small{}4}\tabularnewline
{\small{}2} &  & {\small{}0.219054} & {\small{}7} &  & {\small{}0.219038} & {\small{}10} &  & {\small{}0.219038} & {\small{}10}\tabularnewline
{\small{}3} &  & {\small{}0.038876} & {\small{}10} &  & {\small{}0.038860} & {\small{}19} &  & {\small{}0.038860} & {\small{}20}\tabularnewline
{\small{}4} &  & {\small{}$1.62697\times10^{-3}$} & {\small{}13} &  & {\small{}$1.61074\times10^{-3}$} & {\small{}31} &  & {\small{}$1.61074\times10^{-3}$} & {\small{}35}\tabularnewline
{\small{}5} &  & {\small{}$3.31864\times10^{-5}$} & {\small{}16} &  & {\small{}$1.69589\times10^{-5}$} & {\small{}46} &  & {\small{}$1.69589\times10^{-5}$} & {\small{}56}\tabularnewline
\hline
\end{tabular}
\par\end{centering}
\label{table2}
\end{table}

For a more general discussion on the computational efforts by the two aforementioned approximations, consider the respective numbers of Fourier coefficients involved: (1) $L_{S,m}$ in \eqref{5.15} for the $S$-variate, $m$th-order GPDD approximation; and (2) $L_p=(N+p)!/(N!p!)$ for the $p$th-order GPCE approximations \cite{rahman18b}.  In other words, $L_{S,m}$ grows $S$-degree polynomially with respect to $N$, whereas $L_{p}$ scales $p$-degree polynomially with $N$. For stochastic problems entailing highly nonlinear functions but containing mostly low-variate interactive effects of input variables, $p$ is expected to be much larger than $S$.  Consequently, the GPDD approximation should offer a hefty computational benefit over the GPDD approximation for the same expansion order.  As an example, consider a stochastic problem involving 20 input random variables ($N=20$) and the following truncation parameters of GPDD and GPCE: $S=1,2$, $m=4$, and $p=4$. In this case, the univariate, fourth-order GPDD approximation ($S=1$, $m=4$), the bivariate, fourth-order GPDD approximation ($S=2$, $m=4$), and the fourth-order GPCE approximation ($p=4$) require 81, 1221, and 10,626 Fourier coefficients, respectively.  Clearly, the growth of the number of Fourier coefficients in GPCE is much sharper than that in GPDD. This is primarily because a GPCE approximation is solely dictated by a single truncation parameter $p$, which controls the largest polynomial expansion order preserved, but not the degree of interaction independently.  In contrast, two different truncation parameters $S$ and $m$ are involved in a GPDD approximation, affording a greater flexibility in retaining the largest degree of interaction and largest polynomial expansion order. In consequence, the numbers of Fourier coefficients and hence the computational efforts by the GPDD and GPCE approximations can vary appreciably. A computational study comparing the accuracy and efficiency of GPDD and GPCE approximations in solving high-dimensional stochastic problems is desirable.

\section{Conclusion}
A new generalized PDD, referred to as GPDD, of a square-integrable output random variable, comprising hierarchically ordered multivariate orthogonal polynomials in dependent input random variables with non-product-type probability measures, is presented.  A dimension-wise splitting of appropriate polynomial spaces into subspaces, each spanned by measure-consistent orthogonal polynomials, was constructed, resulting in a polynomial refinement of the generalized ADD and eventually GPDD without the need for a tensor-product structure.  Under prescribed assumptions, the set of measure-consistent orthogonal polynomials was proved to form a complete basis of each subspace, leading to a union-sum collection of such sets of basis functions, including the constant subspace, to span the space of all polynomials.  In addition, the aforementioned collection is dense in a Hilbert space of square-integrable functions, leading to the mean-square convergence of GPDD to the correct limit, including when there are infinitely many random variables.  New results determining statistical properties of random orthogonal polynomials were derived.  The optimality of GPDD and the approximation quality due to truncation were demonstrated.  For independent probability measures, the proposed PDD reduces to the existing PDD, justifying the appellation GPDD introduced in this work.  By exploiting the hierarchical structure of a function, if it exists, the GPDD approximation is anticipated to solve efficiently high-dimensional stochastic problems in the presence of dependent random variables.

\bibliographystyle{elsarticle-num}
\section*{References}
\bibliography{gpdd1_rahman}

\begin{thebibliography}{10}
\expandafter\ifx\csname url\endcsname\relax
  \def\url#1{\texttt{#1}}\fi
\expandafter\ifx\csname urlprefix\endcsname\relax\def\urlprefix{URL }\fi
\expandafter\ifx\csname href\endcsname\relax
  \def\href#1#2{#2} \def\path#1{#1}\fi

\bibitem{rahman08}
S.~Rahman, A polynomial dimensional decomposition for stochastic computing,
  International Journal for Numerical Methods in Engineering 76 (2008)
  2091--2116.

\bibitem{rahman18}
S.~Rahman, Mathematical properties of polynomial dimensional decomposition,
  SIAM/ASA Journal on Uncertainty Quantification 6 (2018) 816--844.

\bibitem{chakraborty08}
A.~Chakraborty, S.~Rahman, Stochastic multiscale models for fracture analysis
  of functionally graded materials, Engineering Fracture Mechanics 75 (2008)
  2062--2086.

\bibitem{rahman11}
S.~Rahman, V.~Yadav, Orthogonal polynomial expansions for solving random
  eigenvalue problems, International Journal for Uncertainty Quantification
  1~(2) (2011) 163--187.

\bibitem{tang16}
K.~Tang, P.~M. Congedo, R.~Abgrall, Adaptive surrogate modeling by {ANOVA} and
  sparse polynomial dimensional decomposition for global sensitivity analysis
  in fluid simulation, Journal of Computational Physics 314~(1) (2016)
  557--589.

\bibitem{ren16}
X.~Ren, V.~Yadav, S.~Rahman, Reliability-based design optimization by
  adaptive-sparse polynomial dimensional decomposition, Structural and
  Multidisciplinary Optimization 53~(3) (2016) 425--452.

\bibitem{griebel10}
M.~Griebel, M.~Holtz, Dimension-wise integration of high-dimensional functions
  with applications to finance, J. Complex. 26~(5) (2010) 455--489.

\bibitem{hoeffding48}
W.~Hoeffding, \href{http://www.jstor.org/stable/2235637}{A class of statistics
  with asymptotically normal distribution}, The Annals of Mathematical
  Statistics 19~(3) (1948) pp. 293--325.
\newline\urlprefix\url{http://www.jstor.org/stable/2235637}

\bibitem{kuo10}
F.~Y. Kuo, I.~H. Sloan, G.~W. Wasilkowski, H.~Wozniakowski, On decompositions
  of multivariate functions, Mathematics of Computation 79 (2011) 953--966.

\bibitem{rabitz99}
H.~Rabitz, O.~Alis, General foundations of high dimensional model
  representations, Journal of Mathematical Chemistry 25 (1999) 197--233,
  10.1023/A:1019188517934.

\bibitem{rahman14}
S.~Rahman, Approximation errors in truncated dimensional decompositions,
  Mathematics of Computation 83~(290) (2014) 2799--2819.

\bibitem{sobol93}
I.~M. Sobol, Sensitivity estimates for nonlinear mathematical models, Math.
  Model. Comput. Exp 1 (1993) 407--414.

\bibitem{caflisch97}
R.~E. Caflisch, W.~Morokoff, A.~Owen, Valuation of mortgage backed securities
  using brownian bridges to reduce effective dimension, Journal of
  Computational Finance 1 (1997) 27--46.

\bibitem{rosenblatt52}
M.~Rosenblatt, Remarks on a multivariate transformation, Ann. Math. Statist. 23
  (1952) 470--472.

\bibitem{rahman09}
S.~Rahman, Extended polynomial dimensional decomposition for arbitrary
  probability distributions, Journal of Engineering Mechanics 135~(12) (2009)
  1439--1451.

\bibitem{hooker07}
G.~Hooker, Generalized functional {ANOVA} diagnostics for high-dimensiobal
  functions of dependent variables, Journal of Computational and Graphical
  Statistics 16~(3) (2007) 709--732.

\bibitem{kleiber13}
C.~Kleiber, J.~Stoyanov, Multivariate distributions and the moment problem,
  Journal of Multivariate Analysis 113 (2013) 7--18.

\bibitem{rahman14b}
S.~Rahman, A generalized {ANOVA} dimensional decomposition for dependent
  probability measures, SIAM/ASA Journal on Uncertainty Quantification 2 (2014)
  670--697.

\bibitem{dunkl14}
C.~F. Dunkl, Y.~Xu, Orthogonal Polynomials of Several Variables, 2nd Edition,
  Encyclopedia of Mathematics and its Applications 155, Cambridge University
  Press, 2001.

\bibitem{erdelyi53}
A.~Erd\'{e}lyi, Higher Transcendental Functions, Vol.~II of Encyclopedia of
  Mathematics and its Applications 155, McGraw-Hill, 1953.

\bibitem{holmquist96}
B.~Holmquist, The $d$-variate vector hermite polynomial of order $k$, Linear
  algebra and its applications 237/238 (1996) 155--190.

\bibitem{appell26}
P.~Appell, J.~K. de~F\'{e}riet, Fonctions hyprg\'{e}om\'{e}triques et
  hypersph\'{e}riques, polynomes d'hermite (1926) 1--14.

\bibitem{krall67}
H.~L. Krall, I.~M. Sheffer, Orthogonal polynomials in two variables, Ann. Mat.
  Pura Appl 76~(4) (1967) 325--376.

\bibitem{golub96}
G.~H. Golub, C.~F. van Loan, Matrix computations, 3rd Edition, The John Hopkins
  University Press, 1996.

\bibitem{courant66}
R.~Courant, D.~Hilbert, Methods of Mathematical Physics, Vol.~I, Interscience
  Publishers Inc., 1966.

\bibitem{griebel16}
M.~Griebel, F.~Y. Kuo, I.~H. Sloan, The {ANOVA} decomposition of a non-smooth
  function of infinitely many variables can have every term smooth, Mathematics
  of Computation 86~(306) (2017) 1855--1876.

\bibitem{ernst12}
O.~G. Ernst, A.~Mugler, H.~J. Starkloff, E.~Ullmann, On the convergence of
  generalized polynomial chaos expansions, ESAIM: Mathematical Modelling and
  Numerical Analysis 46 (2012) 317--339.

\bibitem{rahman18b}
S.~Rahman, A polynomial chaos expansion in dependent random variables, Journal
  of Applied Mathematics and Applications 4 (2018) 1--26.

\end{thebibliography}

\end{document}